\newtheorem{thm}{Theorem}
\theoremstyle{plain}
\theoremstyle{theorem}
\newtheorem{proposition}[thm]{Proposition}
\newtheorem{cor}[thm]{Corollary}
\newtheorem{lemma}{Lemma}[section]
\newtheorem{definition}{Definition}[section]
\newtheorem{rem}{Remark}[section]
\newtheorem{example}{Example}[section]
\numberwithin{equation}{section}
\definecolor{lgray}{gray}{0.9}
\def\leq{\leqslant}
\def\geq{\geqslant}
\def\tilde{\widetilde}
\def\hat{\widehat}
\def\dif{\mathop{}\!\mathrm{d}}
\renewcommand*{\backref}[1]{}
\renewcommand*{\backrefalt}[4]{\quad \tiny
	\ifcase #1 (\textbf{NOT CITED.})
	\or    (Cited on page~#2.)
	\else   (Cited on page~#2.)
	\fi}
\author{Nian Liu}
\address[Nian Liu]
{Department of Mathematics\\
 Pennsylvania State University\\
 State College, PA 16801, USA}
\email[N.~Liu]{nkl5330@psu.edu}
\author{Xue Liu}
\address[Xue Liu]
{School of Mathematics\\ Southeast University\\
 Nanjing 211189, PR China}
\email[X.~Liu]{xueliuseu@seu.edu.cn}
\subjclass[2020]{Primary: 37D05, 37C45.  Second: 37B40.}
\keywords{irregular sets, fiber specification, fiber Bowen's topological entropy, fiber measure theoretical entropy.}
\begin{document}

\begin{abstract}
In this paper, we study the irregular set of any continuous observable for a class of skew product transformations, which is driven by a uniquely ergodic homeomorphism system $(\Omega,\mathbb{P},\theta)$ and satisfies Anosov and topological mixing on fibers property. The orbit-gluing technique provided by the fiber specification property are utilized. For such systems, we prove the following rigidity phenomenon that if the irregular set of any continuous observable is nonempty on a fiber, then the irregular set must be nonempty, residual, and carry full fiber topological entropy on $\mathbb{P}$-a.e. fibers. This result can be applied to a class of partially hyperbolic systems.
\end{abstract}

\title{Irregular sets for a class of skew product transformations}
\maketitle
\parskip 10pt         

\section{Introduction}\label{sec introduction}

\subsection{Background}
Multifractal analysis is a powerful tool in dynamical systems for understanding the statistical and geometric complexity of orbits. A central focus lies in studying the multifractal structure associated with the Birkhoff average of continuous observables and the associated level sets. In \cite{variationalprincipleforrandomanosovsystems}, we have investigated the conditional level sets of Birkhoff average of any continuous function from the viewpoint of fiber Bowen's topological entropy, i.e., some kind of \emph{dimensional} complexity of these level sets, for a class of skew product transformations. This class of skew product transformations is driven by a uniquely ergodic homeomorphism and satisfies  Anosov and topological mixing on fibers property. Therefore, it's natural to investigate that whether irregular sets would carry large \emph{topological} and \emph{dimensional} complexity for such skew product transformations.  This question has been thoroughly studied in deterministic and symbolic settings, particularly in systems with specification or almost specification properties. However, such tools are not directly applicable to skew product systems, especially when the base dynamics lacks mixing or the global system lacks uniform hyperbolicity. Our goal in this paper is to address this gap. Before we move further into this topic, some classical results and limitations in \emph{deterministic} cases need to be discussed first.

Given a continuous transformation $f$ on a compact metric space $M$, and a continuous observable $\varphi:M\to\mathbb{R}$, 
\emph{the irregular set of $\varphi$} is defined by
\begin{displaymath}
	I_{\varphi}=\left\{x\in M:\lim_{n\to\infty}\frac{1}{n}\sum_{i=0}^{n-1}\varphi(f^i(x))\ \mbox{does not exist}\right\},
\end{displaymath}and \emph{the irregular set of the system} $(M,f)$ is defined by
\begin{displaymath}
	I=\cup_{\varphi\in C(X,\mathbb{R})} I_{\varphi}.
\end{displaymath}Points in $I_{\varphi}$ are called \emph{$\varphi$-irregular points} (also known as points with \emph{historic behavior} in \cite{Ruelle2001Historicbehaviour} and \cite{Takens2008Historicbehaviour}), or \emph{irregular points} if there is no ambiguity.
By the well-known Birkhoff's ergodic theorem, for any $f$-invariant probability measure $\mu$ on $M$, $I_{\varphi}$ carries zero $\mu$-measure. Hence both $I_{\varphi}$ and $I$ can be neglected from the perspective of $f$-invariant measures. 
However, points exhibiting historic behavior do have large complexity, and can be fairly large compared with the whole systems $M$ from both \emph{dimensional} and \emph{topological} perspectives, 
provided that the topological dynamics $(M,f)$ has some special properties(e.g., some specification-like property including specification property \cite{Thompson2010irregularspecification}, almost specification property \cite{Thompson2012almostspecification} and $g$-almost product property \cite{PfisterSullivan2007galmost}).

From the \emph{dimensional} perspective, the irregular set $I_{\varphi}$ and $I$ can carry full dimension. Notice that dimensional characteristics include Hausdorff dimension, Bowen's topological entropy \cite{Bowen1973Topentropy} and topological pressure \cite{PesinPitskel1984toppressure}. Firstly, Pesin and Pitskel \cite {PesinPitskel1984toppressure} pointed out  that the irregular set $I$ for full shift on two symbols carries full topological entropy. Later on, Barriera and Schmeling \cite{BarreiraSchmeling2000} studied the irregular set of some uniformly hyperbolic systems including conformal repellers and conformal horseshoes using symbolic dynamics and thermodynamical formalism. 
 The results in \cite{BarreiraSchmeling2000} have been extended to more general settings by Fan \emph{et al} in \cite{FanFeng2001} and Feng \emph{et al} in \cite{FengLauWu2002}. 
Besides, the unified
multifractal framework introduced by Olsen and Winter in a sequence of papers \cite{Olsen2003deformedmeasure,Olsen2006deformedmeasure3,Olsen2007deformedmeasure2,Olsen2008deformedmeasure4} provided a systematic basis for further studies on irregular points. 
Chen \emph{et al} \cite{Chenercai2005topologicalentropyfordivergencepoints} proved that irregular sets for systems with specification property is either empty or carries full topological entropy. 
Similar results about topological pressure are obtained by Thompson in \cite{Thompson2010irregularspecification}. 

It's noticeable that the \emph{specification property} has played an important role in most of the above results, since specification property can help one to construct irregular points as many as possible. For systems without specification property, there are also some innovations, see \cite{PfisterSullivan2007galmost,Thompson2012almostspecification,ChenercaiandZhou2012TopologicalpressureforZdaction, ChenercaiandZhou2013Multifractalanalysisforthehistoricset,Tian2018shadowing} and references therein.

From \emph{topological} perspective, systems with specification-like property can exhibit abundance historic behavior. Recall that the complement of any set of first category is called a \emph{residual set}. In a complete metric space $X$, a set $E$ is residual if and only if $E$ contains a dense $G_{\delta}$ subset of $X$ (see more details in \cite{OxtobyMeasureandCategorybook1980}). Barreira \emph{et al} \cite{BarreiraLiValls2012fullshiftirregular,BarreiraLiValls2014residual} showed that the irregular set $I_{\varphi}$ for full shifts and subshifts with weak specification property is either empty or residual. Similar results for systems with specification property were obtained by Li and Wu \cite{LiWu2014}.
Recent work concentrating on irregular sets can be found in \cite{AraujoPinheiro2021Abundanceofhistoricbehavior,MariaPaulo2021genericityofhistoricbehavior,NakanoYamamoto2021irregularsetofrpiecewisemonotonicmaps,BarrientosNakanoetal2022entropydimensionofirregularsetsfornonhyperbolicsystems,Gelfertetal2022AxiomAandC1+alphasystems} and references therein.

We note that the construction of Moran fractals has played an important role in the proof of \cite{TakensVerbitsky2003variationalprinciple,Thompson2009Variationalprinciplepressure,Thompson2010irregularspecification,Thompson2012almostspecification}, since the estimation of topological entropy is based on entropy distribution principle, and the structure of Moran fractals enable one to construct a suitable measure to apply this principle. To construct suitable fractals contained in the irregular set $I_{\varphi}$, one usually need the orbit-gluing technique provided by the specification property or a weak form specification property. However, for the following concrete skew product systems $\Theta:\mathbb{T}\times \mathbb{T}^2\to \mathbb{T}\times \mathbb{T}^2$, specification property can not be achieved. Since that the classical specification property implies topologically mixing and the factor system
of topologically mixing system is also topologically mixing, but$(\mathbb{T},\theta)$ as factor of $(\mathbb{T}\times \mathbb{T}^2,\Theta)$ is not
topologically mixing. Therefore, the system $(\mathbb{T}\times\mathbb{T}^2, \Theta)$ does not have classical specification property.
\begin{example}[fiber Anosov maps on 2-dimension tori]\label{fiber Anosov maps on 2-d tori}
  The skew product $\Theta:\mathbb{T}\times\mathbb{T}^2\to\mathbb{T}\times\mathbb{T}^2$
is given by
\begin{displaymath}
	\Theta(\omega, x) = (\theta\omega, F_{\omega}x):=(\omega + \alpha, Tx+h(\omega)),
\end{displaymath}
where $\alpha\in\mathbb{R}\setminus\mathbb{Q}, T$ is any hyperbolic toral automorphism, and $h:\mathbb{T}\to\mathbb{T}^2$
is a continuous map.
\end{example}
Moreover, it is unknown whether the skew product transformations considered in this paper have weak form of specification property. It is also worth to point out that the Example \ref{fiber Anosov maps on 2-d tori} is a class of partially hyperbolic systems. This partially hyperbolic systems satisfies the Anosov and topological mixing on fibers property, which generalized the classical topological mixing Anosov system. The definition of the Anosov and topological mixing on fibers property can be found in the next subsection. Recently, Huang, Lian and Lu \cite{HLL} stated the random specification property for such class of skew product transformations. The random specification property can not be applied directly in investigating the multifractal structure of the Birkhoff average, but its fiber realization plays a crucial role in \cite{variationalprincipleforrandomanosovsystems}.

\subsection{Contributions and main results}
To formally state our main results, we give some notations. Let $M$ be a connected closed smooth Riemannian manifold, and $d_M$ be the induced Riemannian metric on $M$.  Let $\theta:\Omega\to\Omega$ be a homeomorphism on the compact metric space $(\Omega,d_{\Omega}).$ The product space $\Omega\times M$ is a compact metric space endowed with the distance $d((\omega_1,x_1),(\omega_2,x_2))=d_{\Omega}(\omega_1,\omega_2)+d_M(x_1,x_2)$ for any $\omega_1,\omega_2\in \Omega$ and $x_1,x_2\in M$.  Let $F:\Omega\to\mathrm{Diff}^2(M)$ be a continuous map, where $\mathrm{Diff}^2(M)$ is endowed with $C^2$-topology. The skew product $\Theta:\Omega\times M\to\Omega\times M$ induced by $F$ and $\theta$ is defined by
\begin{displaymath}
	\Theta(\omega,x)=(\theta\omega,F_{\omega}x),\quad\forall\omega\in\Omega,\forall x\in M.
\end{displaymath}
where we rewrite $F(\omega)$ as $F_{\omega}$. The inductively:
\begin{equation*}
	\Theta^n(\omega,x)=(\theta^n\omega,F_{\omega}^nx):=\begin{cases}
		(\theta^n\omega,F_{\theta^{n-1}\omega}\circ\cdots\circ F_{\omega}x),&\mbox{if}\ n>0,\\ (\omega,x),&\mbox{if}\ n=0,\\ (\theta^{n}\omega,(F_{\theta^{n}\omega})^{-1}\circ\cdots\circ (F_{\theta^{-1}\omega})^{-1}x),&\mbox{if}\ n<0.
	\end{cases}
\end{equation*}

In this paper, we consider the target system $(\Omega\times M,\Theta)$ satisfying:	
\begin{enumerate}
	\item \label{C1} the driven system $(\Omega,\mathcal{B}_{\mathbb{P}}(\Omega),\mathbb{P},\theta)$ is a uniquely ergodic homeomorphism endowed with the unique invariant Borel probability measure $\mathbb{P}$ and $\sigma$-algebra $\mathcal{B}_{\mathbb{P}}(\Omega)$, which is the completion of Borel $\sigma$-algebra $\mathcal{B}(\Omega)$ with respect to $\mathbb{P}$;
	\item \label{C2} Anosov on fibers:  for any $(\omega,x)\in \Omega\times M$, there is a splitting of the tangent bundle of $M_\omega=\{\omega\}\times M$ at $x$, i.e.
	\begin{equation*}
		T_xM_\omega=E_{(\omega,x)}^s\oplus E_{(\omega,x)}^u,
	\end{equation*}which depends continuously on $(\omega,x)\in\Omega\times M$ with $\dim E^s_{(\omega,x)},\ \dim E_{(\omega,x)}^u>0$, and satisfies that
	\begin{equation*}
		D_xF_\omega E_{(\omega,x)}^u=E_{\Theta(\omega,x)}^u,\ \ D_xF_\omega E_{(\omega,x)}^s=E_{\Theta(\omega,x)}^s
	\end{equation*}and
	\begin{equation*}
		\begin{cases}
			|D_xF_\omega\xi|\geq e^{\lambda_0}|\xi|, & \ \forall \xi\in E_{(\omega,x)}^u, \\
			|D_xF_\omega\eta|\leq e^{-\lambda_0}|\eta|, &\ \forall \eta\in E_{(\omega,x)}^s,
		\end{cases}
	\end{equation*}where $\lambda_0>0$ is a constant and $|\cdot|$ is the norm on $T_xM_{\omega}$ induced by the Riemannian metric on $M_{\omega}$;
	\item \label{C3} topological mixing on fibers: for any nonempty open sets $U,V\subset M$, there exists  $N>0$ such that for any $n\geq N$ and $\omega\in\Omega$
	\begin{equation*}
		\Theta^n(\{\omega\}\times U)\cap (\{\theta^n\omega\}\times V)\not=\varnothing.
	\end{equation*}
\end{enumerate}
Bowen showed that topologically mixing Axiom A systems come with specification in \cite{Bowen1971specification}. Therefore, it's quite reasonable to expect our target systems are endowed with uniformly hyperbolic structures and some kind of mixing property on fibers. Examples under consideration are given in the next subsection.

\begin{definition}
	For any $\varphi\in C(\Omega\times M,\mathbb{R})$, the irregular set of $\varphi$ is defined by
	\begin{equation*}
		I_{\varphi}=\left\{(\omega,x)\in\Omega\times M:\lim_{n\to\infty}\frac{1}{n}\sum_{i=0}^{n-1}\varphi(\Theta^i(\omega,x))\mbox{ does not exist} \right\},
	\end{equation*}
and the $\omega$-fiber of $I_{\varphi}$ is defined by
\begin{displaymath}
	I_{\varphi}(\omega)=\left\{x\in M:(\omega,x)\in I_{\varphi}\right\}.
\end{displaymath}
\end{definition}
Now we are ready to state our main theorems. 
\begin{thm}[Dimensional perspective]\label{thm irregular set}
	Let $(\Omega\times M,\Theta)$ be a skew product transformation satisfying Anosov and topological mixing on fiber properties, and driven by a uniquely ergodic homeomorphism $(\Omega,\mathbb{P},\theta)$. Given a continuous observable $\varphi:\Omega\times M\to\mathbb{R},$ if $I_{\varphi}\neq\varnothing$, we then have the following results:
	\begin{enumerate}
		\item for $\mathbb{P}$-a.e. $\omega\in\Omega,$ $I_{\varphi}(\omega)$ is nonempty. 
		\item for $\mathbb{P}$-a.e. $\omega\in\Omega, I_{\varphi}(\omega)$ carries full fiber topological entropy, i.e.
		\begin{displaymath}
			h_{top}(F,I_{\varphi}(\omega),\omega)=h_{top}(F).
		\end{displaymath}
	\end{enumerate}
Here, $h_{top}(F,I_{\varphi}(\omega),\omega)$ refers to the fiber Bowen's topological entropy of $F$ restricted on $I_{\varphi}(\omega)$ and the fiber $\{\omega\}\times M$, and $h_{top}(F)$ refers to the fiber topological entropy of $F$. Definitions of these notions are given in subsection \ref{subsection fiber topological entropy}.
\end{thm}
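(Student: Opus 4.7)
The plan is to adapt the Moran-fractal construction from the deterministic setting (as in \cite{Thompson2010irregularspecification,Chenercai2005topologicalentropyfordivergencepoints}) to the fiber picture, using the fiber specification property established in \cite{variationalprincipleforrandomanosovsystems,HLL} in place of classical specification. The scheme has three stages: first, use $I_{\varphi}\neq\varnothing$ to extract two $\Theta$-invariant ergodic measures with distinct $\varphi$-integrals and near-maximal fiber entropy; second, on a $\mathbb{P}$-full set of fibers, build a Cantor set $F(\omega)\subset I_{\varphi}(\omega)$ by alternately shadowing long generic orbits of these two measures via fiber specification; third, distribute a Bernoulli-type measure on this Cantor set and invoke the fiber-entropy distribution principle to recover full fiber topological entropy.

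For the selection of measures, note that if $(\omega_0,x_0)\in I_{\varphi}$ then the empirical measures $\frac{1}{n}\sum_{i=0}^{n-1}\delta_{\Theta^i(\omega_0,x_0)}$ have at least two subsequential weak-$*$ limits; both are $\Theta$-invariant probabilities projecting to $\mathbb{P}$ by unique ergodicity of $\theta$, and they have distinct $\varphi$-integrals. Starting from this pair and a $\Theta$-invariant measure $\mu_{\max}$ of maximal fiber entropy $h_{top}(F)$, a convex-combination argument followed by an ergodic decomposition yields, for every $\varepsilon>0$, two $\Theta$-ergodic measures $\mu_1,\mu_2$ with $\int\varphi\,d\mu_1\neq\int\varphi\,d\mu_2$ and $h_{\mu_i}(\Theta\mid\theta)\geq h_{top}(F)-\varepsilon$.

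Fix such $\varepsilon>0$ together with $\mu_1,\mu_2$. By a Katok-type fiber entropy characterization (available in \cite{variationalprincipleforrandomanosovsystems}), for $\mathbb{P}$-a.e.\ $\omega$ and every sufficiently large $n$ there is an $(n,\delta,\omega)$-separated set $\Lambda_n^{(i)}(\omega)\subset\{\omega\}\times M$ of cardinality at least $\exp(n(h_{top}(F)-2\varepsilon))$ whose orbits have Birkhoff $\varphi$-average within $\gamma$ of $\int\varphi\,d\mu_i$. Choose rapidly increasing lengths $n_k$ and gap lengths $p_k$ (provided by fiber specification at the shifted base points $\theta^{N_k}\omega$, with $N_k=\sum_{j<k}(n_j+p_j)$) satisfying $(p_k+n_k)/n_{k+1}\to 0$ and $p_k/n_k\to 0$. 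Gluing blocks chosen alternately from $\Lambda_{n_k}^{(1)}$ and $\Lambda_{n_k}^{(2)}$ produces nested unions of fiber Bowen balls whose intersection is a Cantor set $F(\omega)\subset\{\omega\}\times M$. The alternating block structure forces each point of $F(\omega)$ to have both $\int\varphi\,d\mu_1$ and $\int\varphi\,d\mu_2$ as subsequential limits of its Birkhoff average, so $F(\omega)\subset I_{\varphi}(\omega)$, which already yields part (1). Distributing a Bernoulli-type measure $\nu_\omega$ on $F(\omega)$ that assigns each stage-$k$ Bowen ball mass roughly $\exp(-n_k(h_{top}(F)-3\varepsilon))$ and applying the fiber-entropy distribution principle (a Frostman-type lemma for fiber Bowen entropy also developed in \cite{variationalprincipleforrandomanosovsystems}) then gives $h_{top}(F,F(\omega),\omega)\geq h_{top}(F)-C\varepsilon$; the reverse inequality is immediate. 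Letting $\varepsilon$ run through a countable sequence and intersecting the corresponding full-measure sets of $\omega$ yields part (2).

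The principal obstacle is that every step of the Moran construction must be carried out \emph{fiberwise}: the separated sets, the specification gaps, and the glued orbit pieces all depend on $\omega$, and the $k$-th gluing takes place at the shifted base $\theta^{N_k}\omega$ rather than at $\omega$ itself. One must therefore control, at each scale, the set of ``good'' fibers on which the Katok estimates and the specification gluing are simultaneously available with uniform constants, and show that the intersection of these sets over all scales still has $\mathbb{P}$-full measure. This is where unique ergodicity of $(\Omega,\mathbb{P},\theta)$ is indispensable, together with a Borel--Cantelli-type argument driven by Birkhoff's ergodic theorem applied to the gap-length and deviation functions that quantify the fiber specification.
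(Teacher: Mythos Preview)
Your overall scheme---Moran-type fractal built via fiber specification, followed by an entropy distribution argument---is exactly the route the paper takes. However, there is a genuine gap in your measure-selection step. You claim that a convex-combination argument ``followed by an ergodic decomposition'' yields two \emph{ergodic} $\Theta$-invariant measures $\mu_1,\mu_2$ with distinct $\varphi$-integrals and fiber entropy at least $h_{top}(F)-\varepsilon$. This is not justified: entropy-density of ergodic measures is \emph{not known} for these skew products (the paper stresses this explicitly), and nothing prevents a scenario in which all ergodic measures with entropy above $h_{top}(F)-\varepsilon_0$ coincide with a single measure of maximal entropy. Ergodic decomposition of a high-entropy convex combination then gives you back only that one ergodic component in the high-entropy regime, so you cannot separate the integrals. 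Consequently, you also cannot invoke a Katok-type statement in the form ``for every sufficiently large $n$'', because that uniformity in $n$ is precisely what ergodicity (via generic points) would buy.

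The paper's remedy is to abandon ergodicity altogether and work with general $\Theta$-invariant $\mu_0,\mu_1$ (Lemma~\ref{lemma different integral values}), using the \emph{fiber deviation set lemma} (Lemma~\ref{lemma deviation set}) in place of Katok's formula. That lemma produces, for each scale $k$, only a \emph{measurable} length function $\hat n_k:\Omega_{\delta_k}\to\mathbb{N}$ at which the deviation set carries a large $(\omega,\eta/2,\hat n_k(\omega))$-separated set; there is no uniform ``all large $n$''. The unboundedness of $\hat n_k$ is then the real technical crux you allude to in your last paragraph, and the paper resolves it not by a Borel--Cantelli argument but by \emph{Lusin's theorem}: restrict to a compact $\Omega_k$ of measure $\geq 1-\xi_k/2$ on which $\hat n_k$ is continuous, hence bounded by some $\hat n_k^M$; then use Birkhoff's ergodic theorem to ensure that the forward $\theta$-orbit of a.e.\ $\omega$ visits $\Omega_k$ with frequency $\geq 1-\xi_k$, so that the gluing times $T^k_j(\omega)$ can be chosen to land in $\Omega_k$ with controlled gaps. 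Your sketch would become a correct proof once you replace the unjustified ergodic-measure step by this deviation-set/Lusin/Birkhoff mechanism.
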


\begin{thm}[Topological perspective]\label{thm residual}
	Let $(\Omega\times M,\Theta)$ be a skew product transformation satisfying Anosov and topological mixing on fiber properties, and driven by a uniquely ergodic homeomorphism $(\Omega,\mathbb{P},\theta)$. Given a continuous observable $\varphi:\Omega\times M\to\mathbb{R}$, if $I_{\varphi}\neq \varnothing,$ then for $\mathbb{P}$-a.e. $\omega\in\Omega$, $ I_{\varphi}(\omega)$ is residual in $M$.
\end{thm}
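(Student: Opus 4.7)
The plan is to reduce the residuality of $I_\varphi(\omega)$ to the density of a single explicit $G_\delta$ set in $M$, and then to produce density by the fiber orbit-gluing machinery already used to establish Theorem \ref{thm irregular set}. Set
\[
\Phi_n(\omega,x):=\frac{1}{n}\sum_{i=0}^{n-1}\varphi(\Theta^i(\omega,x)),
\]
which is continuous in $x$. For rationals $p<q$ put
\[
B_{p,q}(\omega):=\bigl\{x\in M:\liminf_{n\to\infty}\Phi_n(\omega,x)<p,\ \limsup_{n\to\infty}\Phi_n(\omega,x)>q\bigr\}.
\]
Each of the two defining inequalities rewrites as a countable intersection of open sets, so $B_{p,q}(\omega)$ is a $G_\delta$ subset of $M$ and $I_\varphi(\omega)=\bigcup_{p<q,\,p,q\in\mathbb{Q}}B_{p,q}(\omega)$. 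By Baire's theorem it therefore suffices to produce \emph{one} pair $(p,q)$, depending only on $\varphi$, such that $B_{p,q}(\omega)$ is dense in $M$ for $\mathbb{P}$-a.e.\ $\omega$.

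The hypothesis $I_\varphi\neq\varnothing$ furnishes some $(\omega^*,x^*)$ whose forward empirical measures have two distinct weak-star accumulation points $\mu_1,\mu_2$ (both $\Theta$-invariant) with $\int\varphi\,d\mu_1<\int\varphi\,d\mu_2$; I fix rationals $p<q$ strictly between these two integrals. To show density, I would fix a countable basis $\{U_k\}$ of $M$ and, for each $k$, use the fiber specification property together with typical points for $\mu_1$ and $\mu_2$ (in the sense of the fiber Birkhoff theorem) to construct a point $y_k\in U_k$ whose $\omega$-fiber orbit is a concatenation of three types of blocks: a short initial block steering the orbit into $U_k$, followed by alternating long blocks shadowing $\mu_1$-typical and $\mu_2$-typical orbits for lengths $N_j\uparrow\infty$ chosen fast enough that the partial averages $\Phi_{T_j}(\omega,y_k)$, with $T_j=N_1+\cdots+N_j$, are forced alternately below $p$ and above $q$. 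This is exactly the scaffold of the Moran-type construction used to prove Theorem \ref{thm irregular set}; the only new ingredient is the initial approach block, which is feasible because fiber specification glues orbit segments within arbitrarily small fiber distance, so $y_k$ can be placed anywhere in $U_k$.

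The construction succeeds on a full-$\mathbb{P}$-measure subset of $\Omega$, obtained by intersecting the full-measure sets carrying (i) the fiber Birkhoff theorem for $\mu_1,\mu_2$, (ii) the fiber realization of the random specification property (as in \cite{variationalprincipleforrandomanosovsystems}), and (iii) the gluing data for each $U_k$; since the intersection is countable it remains of full measure, and density of $B_{p,q}(\omega)$ follows on that set. The main obstacle is essentially bookkeeping: the density argument demands a single full-measure set of $\omega$'s that simultaneously supports the approach-plus-oscillation construction for \emph{every} basic open set $U_k$ and for typical points of both measures, and the block-length growth $N_j$ must be tuned so that the short gluing segments and the initial approach contribute negligibly to $\Phi_{T_j}$, precisely as in the Moran construction used for Theorem \ref{thm irregular set}.
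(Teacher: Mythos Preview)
Your approach is correct but structurally different from the paper's. You exhibit $I_\varphi(\omega)$ as containing a single dense $G_\delta$ set $B_{p,q}(\omega)$, building oscillating points near each basic open set directly via fiber specification. The paper instead first isolates a transfer lemma (Proposition \ref{proposition three dense subsets}): for any $(\omega,x)$ there is a dense set of $z\in M$ with $\lim_n[\Phi_n(\omega,z)-\Phi_n(\omega,x)]=0$, obtained by gluing a short approach block to a single point $x_j$ onto longer and longer segments of the orbit of $x$ itself. Applying this to points already known to lie in $K_{\varphi,\alpha_0}(\omega)$ and $K_{\varphi,\alpha_1}(\omega)$ makes both level sets dense; residuality then follows by covering $I_\varphi(\omega)^c$ with the closed Cauchy-type sets $\Lambda_N(\omega)=\{x:|\Phi_m(\omega,x)-\Phi_n(\omega,x)|\le\delta\text{ for all }m,n\ge N\}$ and showing each has empty interior via a two-line contradiction using the two dense level sets (Proposition \ref{proposition residual}). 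Your route is more direct; the paper's is more modular, and its transfer lemma shadows the orbit of a \emph{given} point rather than a measure-typical one, which sidesteps any appeal to Birkhoff-typical points for the invariant measures $\mu_1,\mu_2$. That is a genuine simplification, since those accumulation measures need not be ergodic (the paper flags exactly this obstacle after Lemma \ref{lemma different integral values}). In your sketch this gap is covered only implicitly by deferring to the Moran machinery of Theorem \ref{thm irregular set}; to make it self-contained you should either invoke the fiber deviation-set Lemma \ref{lemma deviation set} in place of ``typical points for $\mu_1,\mu_2$'', or first pass to ergodic components $\nu_1,\nu_2$ with $\int\varphi\,d\nu_1<p<q<\int\varphi\,d\nu_2$ before appealing to Birkhoff.
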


We remark that these results are highly nontrivial. Since $I_{\varphi}\neq\varnothing$ only implies that there exists some $\omega^*\in \Omega$, such that $I_{\varphi}(\omega^*)\neq\varnothing$. Our main results reveal a rigidity phenomenon: if the irregular set of a continuous observable is nonempty on any single fiber, then it must be nonempty, residual, and carry full fiber topological entropy on almost every fiber. This provides the first such results in the literature for skew product systems with partially hyperbolic or non-specification dynamics. 


 Theorem \ref{thm residual} describes the fiber topological complexity of $I_\varphi$ in $M$. One may ask whether $I_\varphi$ is dense in $\Omega\times M$. The following corollary gives a sufficient condition, and its proof is addressed in section \ref{sec proof of lemmas}.
\begin{cor}\label{corollary dense}
	Let $(\Omega\times M,\Theta)$ be a skew product transformation satisfying Anosov and topological mixing on fiber properties and driven by a uniquely ergodic homeomorphism $(\Omega,\mathbb{P},\theta)$. Assume that every nonempty open subset $U$ of $\Omega$ is assigned \emph{positive} $\mathbb{P}$-measure, then the irregular set $I_{\varphi}$ is either empty or dense in $\Omega\times M$.
\end{cor}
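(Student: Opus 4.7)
The plan is to reduce the density of $I_\varphi$ in $\Omega\times M$ to \fullref{thm residual} applied fiberwise, using the positivity-of-measure hypothesis to control the $\Omega$-direction and Baire-category considerations to control the $M$-direction.

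First, I would invoke \fullref{thm residual}: since $I_\varphi\neq\varnothing$ (which I take as an implicit standing hypothesis, inherited from the two main theorems; otherwise the statement is vacuous unless $\Omega\times M=\varnothing$), there is a set $\Omega_0\subset\Omega$ with $\mathbb{P}(\Omega_0)=1$ such that $I_\varphi(\omega)$ is residual in $M$ for every $\omega\in\Omega_0$. In particular, for each such $\omega$ there is a dense $G_\delta$ subset $G_\omega\subset M$ with $G_\omega\subset I_\varphi(\omega)$.

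Next, I would verify density by testing against basic open sets. Let $W\subset\Omega\times M$ be an arbitrary nonempty open set; it is enough to find a product neighborhood $U\times V\subset W$ with $U\subset\Omega$ and $V\subset M$ nonempty open, and show $I_\varphi\cap(U\times V)\neq\varnothing$. By the standing hypothesis, $\mathbb{P}(U)>0$, hence $U\cap\Omega_0\neq\varnothing$ (as $\Omega_0$ has full $\mathbb{P}$-measure). Pick any $\omega\in U\cap\Omega_0$. Since $G_\omega$ is dense in $M$ and $V$ is nonempty open, $G_\omega\cap V\neq\varnothing$; choosing $x\in G_\omega\cap V\subset I_\varphi(\omega)\cap V$ gives $(\omega,x)\in I_\varphi\cap(U\times V)\subset I_\varphi\cap W$.

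This argument is essentially a one-step deduction from \fullref{thm residual}; there is no real obstacle. The only subtle point to mention explicitly is that one genuinely uses both the residual/$G_\delta$ structure on the $M$-fiber (not mere nonemptiness) and the positivity assumption on $\mathbb{P}$, because \fullref{thm residual} by itself tells us nothing about $I_\varphi(\omega)$ outside a $\mathbb{P}$-conull set, and without the assumption on $\mathbb{P}$ a prescribed open $U\subset\Omega$ might lie entirely in the $\mathbb{P}$-null exceptional set.
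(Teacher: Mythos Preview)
Your proposal is correct and follows essentially the same route as the paper: reduce to basic open sets $U\times V$, use the positivity hypothesis to find $\omega\in U$ in the $\mathbb{P}$-conull set furnished by \fullref{thm residual}, and then use that residual implies dense to find $x\in I_\varphi(\omega)\cap V$. One minor remark: only the \emph{density} of $I_\varphi(\omega)$ is actually used, not the full $G_\delta$ structure you single out.
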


\subsection{Examples and Applications}\label{subsection application}
\subsubsection{Fiber Anosov maps on 2-dim tori}
In \cite[Section 8.1.2]{HLL} or \cite[Section 7]{HuangLianLu2023SCICHINA}, Huang, Lian and Lu proved the
Example \ref{fiber Anosov maps on 2-d tori} is Anosov and topological mixing on fibers. Note
that the irrational rotation on circle $\theta:\omega\mapsto\omega+\alpha$ is a uniquely ergodic system with the unique
$\theta$-invariant measure $\mathbb{P}$, i.e. Lebesgue measure on $\mathbb{T}$.
Results in \cite{Chenercai2005topologicalentropyfordivergencepoints,Thompson2010irregularspecification} can not apply to $\Theta$.

Note that $T$ is linear map, for any $\omega\in\mathbb{T}$ and $n\in\mathbb{N}$, the fiber Bowen’s metric
\begin{displaymath}
	\begin{split}
		d_{\omega}^n(x,y)&=\max_{0\leq i\leq n-1}\{d_{\mathbb{T}^2}(F_{\omega}^i(x),F_{\omega}^i(y))\}\\&=\max_{0\leq i\leq n-1}\{d_{\mathbb{T}^2}(T^i(x),T^i(y))\}\\&=d_n^T(x,y),
	\end{split}
\end{displaymath}
where $d_n^T$ is the usual Bowen's metric induced by $T$ on $\mathbb{T}^2$. As a consequence, for all $\omega\in\Omega$, we have
$B_n(\omega, x,\epsilon)=B^T_n(x, \epsilon):= \{y \in M : d^T_n(x, y)<\epsilon\}$, and therefore, $h_{top}(F, Z, \omega) = h_{top}(T, Z)$ for any nonempty subset $Z\subset\mathbb{T}^2$, where $h_{top}(T, Z)$ is the Bowen topological entropy defined on
noncompact set for deterministic system $(\mathbb{T}^2,T)$ (see \cite{Bowen1973Topentropy,Pesinbook}). Therefore, for such $\Theta:\mathbb{T}\times\mathbb{T}^2\to\mathbb{T}\times\mathbb{T}^2$, $h_{top}(F,I_{\varphi}(\omega),\omega)$ in Theorem \ref{thm irregular set} can be replaced by $h_{top}(T, I_{\varphi}(\omega))$.

Denote by $h_{\mu}(F)$ the fiber measure-theoretical entropy of $\mu$ (see more details in subsection \ref{subsection fiber measure entropy}). Combining theorem \ref{thm irregular set} and theorem \ref{thm residual}, one conclude that
\begin{cor}
	Let $\varphi\in C(\mathbb{T}\times\mathbb{T}^2,\mathbb{R}),\mathbb{P}$ denote the Lebesgue measure on $\mathbb{T}$. Assume that $I_{\varphi}\neq\varnothing$, then for $\mathbb{P}$-a.e. $\omega\in\mathbb{T}$, we have
	\begin{enumerate}
		\item $h_{top}(T, I_{\varphi}(\omega))= h_{top}(F)=h_{top}(\Theta),$
		where the second equality follows from the Abramov-Rohlin formula $h_{\mu}(\Theta)=h_{\mu}(F)+h_{\mathbb{P}}(\theta)$ for $\mu\in I_{\Theta}(\mathbb{T}\times\mathbb{T}^2)$ and the variational principle between (fiber) topological entropy and (fiber) measure-theoretical entropy;
		\item $I_{\varphi}(\omega)$ is residual in $\mathbb{T}^2$.
	\end{enumerate}
	
\end{cor}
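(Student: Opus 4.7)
The plan is to extract the corollary directly from Theorems \ref{thm irregular set} and \ref{thm residual} together with three ingredients already pointed out in the discussion preceding the corollary: (a) the system $(\mathbb{T}\times\mathbb{T}^2,\Theta)$ is a target system; (b) the fiber Bowen metric on this example coincides with the deterministic Bowen metric of $T$; (c) the Abramov--Rohlin formula together with the (fiber) variational principles relate $h_{top}(F)$ and $h_{top}(\Theta)$.

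First I would invoke (a), which is precisely what \cite{HLL} establishes in Section 8.1.2: the base map $\theta:\omega\mapsto \omega+\alpha$ is uniquely ergodic with invariant measure $\mathbb{P}=$ Lebesgue, the fiber maps $F_\omega x = Tx + h(\omega)$ inherit the constant Anosov splitting $E^s\oplus E^u$ of $T$, and topological mixing on fibers follows from the topological mixing of $T$. Consequently both Theorem \ref{thm irregular set} and Theorem \ref{thm residual} are applicable.

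For item (1), I would proceed in two steps. Theorem \ref{thm irregular set}(2) gives $h_{top}(F,I_\varphi(\omega),\omega) = h_{top}(F)$ for $\mathbb{P}$-a.e.\ $\omega$. The remark immediately preceding the corollary notes that, because $F_\omega^i$ differs from $T^i$ only by an $\omega$-dependent translation, we have $d_\omega^n(x,y) = d_n^T(x,y)$ for every $n,x,y$. This equality of Bowen metrics is preserved by the definition of fiber Bowen's topological entropy, so $h_{top}(F,Z,\omega) = h_{top}(T,Z)$ for every $Z\subset \mathbb{T}^2$; applying it with $Z = I_\varphi(\omega)$ yields the first equality. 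For the second equality, $h_{top}(F)=h_{top}(\Theta)$, I would combine the Abramov--Rohlin formula
\begin{displaymath}
  h_\mu(\Theta) = h_\mu(F) + h_{\mathbb{P}}(\theta), \qquad \mu\in I_\Theta(\mathbb{T}\times\mathbb{T}^2),
\end{displaymath}
with the fact that $h_{\mathbb{P}}(\theta)=0$ (the irrational rotation has zero metric entropy), so $h_\mu(\Theta)=h_\mu(F)$, and then apply the variational principles
\begin{displaymath}
  h_{top}(\Theta) = \sup_{\mu\in I_\Theta}h_\mu(\Theta),\qquad
  h_{top}(F) = \sup_{\mu\in I_\Theta}h_\mu(F)
\end{displaymath}
recalled in Section \ref{sec preliminary}. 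Taking suprema on both sides gives the desired equality.

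Item (2) is then an immediate application of Theorem \ref{thm residual}, which asserts residuality of $I_\varphi(\omega)$ in $\mathbb{T}^2$ for $\mathbb{P}$-a.e.\ $\omega$. The fibers on which both (1) and (2) hold form a $\mathbb{P}$-full-measure set because the intersection of two such sets is still of full measure. The only real bookkeeping point — and hence the main, though minor, obstacle — is checking that the fiber entropy $h_{top}(F)$ appearing in Theorem \ref{thm irregular set} is exactly the object dual to $h_\mu(F)$ under the fiber variational principle; but this is standard within the framework set up in Section \ref{sec preliminary}, so no extra work is needed.
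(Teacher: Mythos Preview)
Your proposal is correct and follows essentially the same approach as the paper, which simply states that the corollary is obtained by combining Theorem \ref{thm irregular set} and Theorem \ref{thm residual} after the preparatory observations (the system being a target system, the coincidence of the fiber and deterministic Bowen metrics, and the Abramov--Rohlin formula). Your write-up in fact spells out more detail than the paper does, including the explicit use of $h_{\mathbb{P}}(\theta)=0$ for the irrational rotation.
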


Notice that every nonempty open set of $\mathbb{T}$ is assigned positive Lebesgue measure. The following corollary follows directly from corollary \ref{corollary dense}.
\begin{cor}
	Let $\varphi\in C(\mathbb{T}\times\mathbb{T}^2,\mathbb{R}),$ then $I_{\varphi}$ is either empty or dense in $\mathbb{T}\times\mathbb{T}^2$.
\end{cor}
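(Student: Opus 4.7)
The plan is to derive this statement as an immediate application of \fullref{corollary dense}. The setup preceding the statement already supplies the skew product $\Theta(\omega,x)=(\omega+\alpha,Tx+h(\omega))$ on $\mathbb{T}\times\mathbb{T}^2$, together with the verification (cited from \cite[Section 8.1.2]{HLL}) that $\Theta$ is Anosov and topologically mixing on fibers, and that the driven system $(\mathbb{T},\mathbb{P},\theta)$, with $\theta\omega=\omega+\alpha$ and $\alpha\in\mathbb{R}\setminus\mathbb{Q}$, is a uniquely ergodic homeomorphism whose unique invariant measure $\mathbb{P}$ is the Lebesgue measure on $\mathbb{T}$. Hence the structural hypotheses of \fullref{corollary dense} are all in place for this particular $\Theta$.

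What remains is to check the additional positivity hypothesis of \fullref{corollary dense}: every nonempty open subset $U\subset\Omega$ carries positive $\mathbb{P}$-measure. Here $\Omega=\mathbb{T}$ and $\mathbb{P}$ is the Lebesgue measure, so any nonempty open $U\subset\mathbb{T}$ contains a nondegenerate open arc and therefore has strictly positive Lebesgue measure. This verifies the positivity assumption with no further work.

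Invoking \fullref{corollary dense} then yields that $I_{\varphi}$ is dense in $\mathbb{T}\times\mathbb{T}^2$ whenever it is nonempty, and combined with the trivial alternative that $I_{\varphi}$ could be empty this produces the claimed dichotomy. There is no real obstacle at this stage, because the substantive work, namely the orbit-gluing and fiber-specification arguments needed to propagate nonemptiness of $I_{\varphi}$ on a single fiber to nonemptiness on $\mathbb{P}$-a.e.\ fiber (\fullref{thm irregular set}) and then to fiber-density via \fullref{thm residual}, has already been absorbed into \fullref{corollary dense}. The only ingredient this corollary supplies beyond those two theorems is the density of the ``good'' set of base points $\omega\in\Omega$ in $\Omega$ itself, and that is exactly what the positivity of $\mathbb{P}$ on nonempty open sets provides, since a full-measure subset of $\Omega$ must then intersect every nonempty open subset of $\mathbb{T}$.
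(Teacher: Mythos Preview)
Your argument is correct and follows the paper's approach exactly: the paper simply notes that every nonempty open subset of $\mathbb{T}$ has positive Lebesgue measure and then states that the corollary follows directly from \fullref{corollary dense}. Your additional paragraph unpacking why the positivity hypothesis suffices is accurate commentary, but the essential content matches the paper's one-line justification.
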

\subsubsection{Random composition of $2\times 2$ area-preserving positive matrices driven by uniquely ergodic subshift}

Let
\begin{equation*}
	\left\{B_i=\begin{pmatrix}
		a_i & b_i  \\
		c_i & d_i  \\
	\end{pmatrix}\right\}_{1\leq i\leq k}
\end{equation*}be $2\times2$ matrices with $a_i,b_i,c_i,d_i\in\mathbb{Z}^+$, and $|a_id_i-c_ib_i|=1$ for any $i\in\{1,...,k\}$.  Let $\Omega=\{1,...,k\}^\mathbb{Z}$, with the left shift operator $\theta$, be the symbolic dynamical system with $k$ symbols.
For any $\omega=(...,\omega_{-1},\omega_0,\omega_1,...)\in \Omega$, we define $F_\omega=B_{\omega_0}.$ Then the skew product $\tilde{\Theta}:\Omega\times\mathbb{T}^2\rightarrow \Omega\times\mathbb{T}^2$ defined by
\begin{equation*}
	\tilde{\Theta}(\omega, x)=\big(\theta\omega, F_\omega x\big)
\end{equation*}
is an Anosov and topological mixing on fibers system by Proposition 8.2 and Theorem 8.2 in \cite{HLL}.

Now we let $\Omega_s\subset \Omega$ be any uniquely ergodic subshift. For instance, when $k\geq 2$, let $(\Omega_s,\theta)$ be the Arnoux-Rauzy subshift, which is uniquely ergodic and minimal (see \cite[Proposition 2.17]{David2007strictlyergodicsubshifts}). Especially, when $k=2$, the Arnoux-Rauzy subshift is the Sturmian subshift. Then the following system is one of target systems:
\begin{equation*}
	\Theta:\Omega_s\times \mathbb{T}^2\to \Omega_s\times \mathbb{T}^2\mbox{ by }\Theta=\tilde{\Theta}|_{\Omega_s\times \mathbb{T}^2}.
\end{equation*}

We denote the uniquely invariant probability measure on the Arnoux-Rauzy subshift  $(\Omega_s,\theta)$ by $\mathbb{P}$. Notice that the support of $\mathbb{P}$, named $\mathrm{supp}(\mathbb{P})$, is a $\theta$-invariant closed subset. By the minimality of $\theta$, we have $\mathrm{supp}(\mathbb{P})=\Omega_{s}$ or $\Omega_{s}=\varnothing$. However, $\mathrm{supp}(\mathbb{P})=\varnothing$ implies that $\mathbb{P}$ is zero, which leads to a contradiction. Hence, we conclude that $\mathrm{supp}(\mathbb{P})=\Omega_s$, and every nonempty open subset of $\Omega_s$ is assigned positive $\mathbb{P}$-measure. The following conclusion follows directly from the above reasoning.

\begin{cor}
	Let $\varphi\in C(\Omega_s\times\mathbb{T}^2,\mathbb{R}),\mathbb{P}$ denote the uniquely $\theta$-invariant measure on $\Omega_s$. Assume that $I_{\varphi}\neq\varnothing$, the for $\mathbb{P}$-a.e. $\omega\in\Omega_s$, we have
	\begin{enumerate}
		\item $h_{top}(F, I_{\varphi}(\omega),\omega)=h_{top}(F);$
		\item $I_{\varphi}(\omega)$ is residual in $\mathbb{T}^2$.
	\end{enumerate}
	Moreover, $I_{\varphi}$ is dense in $\Omega_s\times \mathbb{T}^2$.
\end{cor}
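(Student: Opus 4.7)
The plan is to verify that the restricted skew product $\Theta=\tilde{\Theta}|_{\Omega_s\times\mathbb{T}^2}$ satisfies the hypotheses of Theorem \ref{thm irregular set}, Theorem \ref{thm residual}, and Corollary \ref{corollary dense}, and then read off the conclusion. All the substantive content has already been assembled in the preceding paragraphs; the corollary is essentially an instantiation.

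First I would check the three structural hypotheses. The driven system $(\Omega_s,\mathbb{P},\theta)$ is uniquely ergodic, since by \cite[Proposition 2.17]{David2007strictlyergodicsubshifts} the Arnoux-Rauzy subshift is minimal and uniquely ergodic. Anosov on fibers passes from $\tilde{\Theta}$ on $\Omega\times\mathbb{T}^2$ (established via Proposition 8.2 of \cite{HLL}) to its restriction, because the hyperbolic splitting $T_x\mathbb{T}^2=E^s_{(\omega,x)}\oplus E^u_{(\omega,x)}$ and the uniform expansion/contraction estimates are defined pointwise in $(\omega,x)$ and depend only on $F_\omega=B_{\omega_0}$, so they survive restriction of the base to $\Omega_s$. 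Similarly, topological mixing on fibers, which asks that for all nonempty open $U,V\subset\mathbb{T}^2$ there exists $N$ such that $\Theta^n(\{\omega\}\times U)\cap(\{\theta^n\omega\}\times V)\neq\varnothing$ for every $n\geq N$ and every $\omega$ in the base, transfers immediately from $\Omega$ to the subset $\Omega_s$ by Theorem 8.2 of \cite{HLL}.

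Next I would apply the two main theorems. Under the assumption $I_{\varphi}\neq\varnothing$, Theorem \ref{thm irregular set} gives for $\mathbb{P}$-a.e.\ $\omega\in\Omega_s$ that $I_{\varphi}(\omega)$ is nonempty and $h_{top}(F,I_{\varphi}(\omega),\omega)=h_{top}(F)$, which is statement (1). Theorem \ref{thm residual} applied in the same setting gives that $I_{\varphi}(\omega)$ is residual in $\mathbb{T}^2$ for $\mathbb{P}$-a.e.\ $\omega$, which is statement (2). These two $\mathbb{P}$-null sets can be thrown together, so both hold simultaneously on a single full-measure set of $\omega$'s.

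Finally, for the density statement, I would invoke Corollary \ref{corollary dense}; the hypothesis there is that every nonempty open subset of the base carries positive $\mathbb{P}$-measure. The paragraph immediately preceding the corollary already argues this: by $\theta$-invariance, $\mathrm{supp}(\mathbb{P})$ is a nonempty closed $\theta$-invariant subset of $\Omega_s$, and by minimality it must equal $\Omega_s$, so every nonempty relatively open set in $\Omega_s$ meets $\mathrm{supp}(\mathbb{P})$ and hence has positive measure. Applying Corollary \ref{corollary dense} then yields density of $I_{\varphi}$ in $\Omega_s\times\mathbb{T}^2$. There is no real obstacle here beyond bookkeeping: the work is done by the general theorems and by the already-stated facts about the Arnoux-Rauzy subshift, and the only mild care needed is to confirm that restriction of the base from $\Omega$ to the minimal set $\Omega_s$ preserves the Anosov-and-mixing-on-fibers framework.
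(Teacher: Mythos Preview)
Your proposal is correct and matches the paper's approach exactly: the paper states that the corollary ``follows directly from the above reasoning,'' and the reasoning in the preceding paragraphs is precisely what you spell out --- verifying via \cite{HLL} that the restricted system is Anosov and topologically mixing on fibers over the uniquely ergodic Arnoux-Rauzy base, invoking Theorems \ref{thm irregular set} and \ref{thm residual} for parts (1) and (2), and using minimality to get $\mathrm{supp}(\mathbb{P})=\Omega_s$ so that Corollary \ref{corollary dense} yields density.
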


We note that our two main examples given in Sec \ref{subsection application}, including \emph{fiber Anosov maps on 2-dimensional tori} and \emph{random composition of $2\times 2$ area-preserving positive matrices driven by Arnoux-Rauzy subshifts}, satisfy the conditions of corollary \ref{corollary dense}. For these two examples, our corollary \ref{corollary dense} implies that the irregular set of any continuous observable is either empty or dense in $\Omega\times M$. There are some innovations need to be pointed out. If one regards the skew product system $(\Omega\times M,\Theta)$ as a deterministic system, then our results can not follow directly from results in \cite{LiWu2014}. This is because \emph{fiber Anosov maps on 2-dimensional tori} does not have specification property (see more details in subsection \ref{subsection application}). Besides, if $\theta$ is a diffeomorphism on a compact differentiable manifold $\Omega$ such that the contraction and the expansion of $D\theta$ are weaker than those of $DF_\omega$, then the skew product system $(\Omega\times M,\Theta)$  is actually a partially hyperbolic system, see \cite[Proposition 5.1]{liu2021}.

This paper is organized as follows.  Section \ref{sec preliminary}  briefly recalls the fiber specification property, notions of fiber topological entropy and fiber measure-theoretical entropy.  Section \ref{sec proof} and  section \ref{sec proof residual} address the proof of Theorem \ref{thm irregular set} and Theorem \ref{thm residual} respectively. The proof of all lemmas and corollaries is collected in section \ref{sec proof of lemmas}. 


We fix some notation here:
\begin{equation*}
	\begin{split}	
		\mathcal{M}(M)&:\mbox{the set of all Borel probability measures on $M$}.\\
		\mathcal{M}(\Omega\times M)&:\mbox{the set of all Borel probability measures on $\Omega\times M$}.\\
		\mathcal{M}_{\mathbb{P}}(\Omega\times M)&: \mbox{the set of all Borel probability measures on $\Omega\times M$ with marginal $\mathbb{P}$ on $\Omega$. }\\
		I_{\Theta}(\Omega\times M)&:\mbox{the set of all $\Theta$-invariant Borel probability measures on $\Omega\times M$.}\\
		I_{\Theta}^e(\Omega\times M)&:\mbox{the set of all ergodic $\Theta$-invariant Borel probability measures on $\Omega\times M$.}
	\end{split}
\end{equation*}
Notice that all above spaces are endowed with weak* topology.


\section{Preliminary Lemmas}\label{sec preliminary}

\subsection{Fiber specification}\label{subsection fiber specification}
In this subsection, we introduce the definition of fiber specification property, which is extremely useful to find some specific point shadowing pieces of orbit segments within any given precision. The orbit-gluing techniques provided by the fiber specification are utilized to construct fiber Moran-like fractals contained in the irregular set, see \ref{sec proof}. Recall that $\Theta$ is the skew product over product space $\Omega\times M$.

\begin{definition}[Fiber specification]\label{def fiber specification}
	For any $\omega\in\Omega$, an $\omega$-specification $S_{\omega}=(\omega,\tau, P_{\omega})$ consists of a finite collection of intervals $\tau=\{I_1,\cdots,I_k\}$, where $I_i=[a_i,b_i]\subset\mathbb{Z}$, and a map $P_{\omega}:\cup_{i=1}^k I_i\to M$, such that for any $t_1,t_2\in I\in\tau$, we have
	\begin{displaymath}
		F_{\theta^{t_1}\omega}^{t_2-t_1}(P_{\omega}(t_1))=P_{\omega}(t_2).
	\end{displaymath}
	An $\omega$-specification $S_{\omega}$ is called $m$-spaced if $a_{i+1}>b_i+m$ for every $i\in\{1,\dots,k-1\}$.
	
	The system $\Theta$ is said to have the fiber specification property if for any $\epsilon>0$, there exists some $m=m(\epsilon)>0$, such that for any $\omega\in\Omega$ and any $m$-spaced $\omega$-specification $S_{\omega}=(\omega,\tau, P_{\omega})$, we can find a point $x\in M$ such that $S_{\omega}$ is $(\omega,\epsilon)$-shadowed by $x$, i.e.
	\begin{equation*}
		d_M(P_{\omega}(t),F_{\omega}^t(x))<\epsilon,\quad\forall t\in I\in \tau.
	\end{equation*}
\end{definition}

For our target system $(\Omega\times M,\Theta)$, the fiber specification property has already established in \cite{variationalprincipleforrandomanosovsystems}. For the sake of completeness, we give the statement of fiber specification theorem here.

\begin{lemma}{\cite[Theorem A]{variationalprincipleforrandomanosovsystems}}\label{thm fiber specification}
	Let $(\Omega\times M,\Theta)$ be a skew product transformation satisfying Anosov and topological mixing on fiber properties and driven by a uniquely ergodic homeomorphism $(\Omega,\theta)$, then  $\Theta$ has the fiber specification property in definition \ref{def fiber specification}.
\end{lemma}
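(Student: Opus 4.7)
The plan is to adapt the classical Bowen-style proof of specification for topologically mixing Axiom~A systems to the fibered setting, executing every estimate uniformly in the base point $\omega\in\Omega$. The two structural ingredients are a \emph{fiber local product structure} coming from uniform hyperbolicity on fibers, and a \emph{uniform transition time}, which by the topological mixing on fibers hypothesis already holds with a single integer $N=N(U,V)$ valid for every $\omega$. Unique ergodicity of the base plays no role in the construction itself; it is merely the setting in which specification is later exploited.

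First I would construct the fiber local product structure. A standard graph-transform argument applied to the continuous invariant splitting $T_xM_\omega=E^s_{(\omega,x)}\oplus E^u_{(\omega,x)}$ yields stable and unstable manifolds $W^{s}_{\delta_0}(\omega,x)$, $W^{u}_{\delta_0}(\omega,x)$ of some uniform size $\delta_0>0$, with forward/backward contraction at rate $e^{-\lambda_0}$. By compactness of $\Omega\times M$ and continuity of the splitting in $(\omega,x)$, there exist constants $\delta_1\leq\delta_0$ and $C>0$, independent of $\omega$, such that whenever $d_M(x,y)<\delta_1$ the intersection $W^{s}_{\delta_0}(\omega,x)\cap W^{u}_{\delta_0}(\omega,y)$ is a single point $[x,y]_\omega$, with $d_M(x,[x,y]_\omega),\,d_M(y,[x,y]_\omega)\leq C\,d_M(x,y)$.

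Next, given $\epsilon>0$, I would fix $\delta\leq\delta_1$ with $C\delta<\epsilon/3$ and cover $M$ by finitely many $\delta$-balls $B(z_1,\delta),\dots,B(z_q,\delta)$. Applying the fiber mixing hypothesis to each of the $q^2$ ordered pairs and taking the maximum of the resulting transition times yields a single integer $N=N(\delta)$ such that, for every $\omega\in\Omega$, every $n\geq N$, and all $i,j$, one has $F^{n}_{\omega}(B(z_{i},\delta))\cap B(z_j,\delta)\neq\varnothing$. I would then set $m:=N$ (possibly enlarging it so that $Ce^{-\lambda_0 m}<1/2$). For any $m$-spaced $\omega$-specification $(\omega,\{I_1,\dots,I_k\},P_\omega)$ with $I_j=[a_j,b_j]$, mixing produces at each gap a connecting orbit segment that starts within $\delta$ of $P_\omega(b_j)$ and ends within $\delta$ of $P_\omega(a_{j+1})$. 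Concatenating these connectors with the prescribed blocks gives a $\delta$-pseudo-orbit along the single base sequence $\omega,\theta\omega,\theta^2\omega,\dots$. Iterating the bracket $[\cdot,\cdot]_{\theta^{a_j}\omega}$ from the last block backwards rectifies this pseudo-orbit into a genuine $\Theta$-orbit through some $x\in M$, and by construction the restriction of this orbit to $\bigcup_jI_j$ lies within $\epsilon$ of $P_\omega$, which is exactly the shadowing required by \fullref{def fiber specification}.

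The main obstacle is controlling how the bracket Lipschitz factor $C$ and the contraction rate $e^{-\lambda_0}$ compete as corrections are telescoped across the $k$ gluings. The standard remedy I would use is to enforce $Ce^{-\lambda_0 m}<1/2$, so that the sequence of corrections introduced by the bracket operations forms a geometric series of total mass at most $2C\delta<\epsilon$, uniformly in the number of intervals $k$ and in $\omega$. All constants produced by this scheme depend only on $\epsilon$ and on the hyperbolic data of the bundles $E^{s/u}$, which, by continuity and compactness of $\Omega\times M$, are uniformly bounded; consequently $m=m(\epsilon)$ can indeed be chosen independently of $\omega$.
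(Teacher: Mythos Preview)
The paper does not prove this lemma; it is quoted verbatim as Theorem~A of the companion paper \cite{variationalprincipleforrandomanosovsystems} and used as a black box. There is therefore no in-paper argument to compare against.

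Your sketch is the natural fiberwise adaptation of Bowen's proof for mixing Axiom~A systems and is essentially correct. The two points that do the real work are exactly the ones you isolate: (i) the hyperbolic constants $\lambda_0,\delta_0,\delta_1,C$ are uniform in $(\omega,x)$ by continuity of the splitting over the compact $\Omega\times M$, so the fiber local product structure and the bracket $[\cdot,\cdot]_\omega$ are available with the same constants on every fiber; and (ii) the topological mixing on fibers hypothesis, as stated in condition~(\ref{C3}), already furnishes a single $N=N(U,V)$ valid for all $\omega$, so passing to a finite $\delta$-cover gives a single $m=m(\epsilon)$. The inductive bracket correction with the geometric control $Ce^{-\lambda_0 m}<1/2$ is the standard way to keep successive corrections summable and within the domain where the bracket is defined; just make sure when you write it out that at each inductive step the two points being bracketed lie on the \emph{same} fiber $\{\theta^{a_j}\omega\}\times M$, which they do because the pseudo-orbit you build lives over the single base orbit $(\theta^n\omega)_{n\ge0}$. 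Unique ergodicity of $(\Omega,\theta)$ is indeed irrelevant here, as you note.
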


\subsection{Fiber Bowen's topological entropy and classical fiber topological entropy}\label{subsection fiber topological entropy}In this subsection, we generalize the concept of Bowen's topological entropy to non-compact or non-invariant sets for skew product transformations. This is a special case of Carath\'eodory dimension structure. See the  definition of Carath\'eodory dimension structure and more details in \cite[Chapter 1]{Pesinbook}. We will compare the definition of fiber Bowen's topological entropy with the classical definition of fiber topological entropy in remark \ref{remark comparsion}. See the classical definition of fiber topological entropy in \cite[Definition 1.2.3]{KL06}.

For any $n\in\mathbb{N}$ and $\omega\in\Omega$, define the fiber Bowen's metric by
\begin{equation*}
	d_{\omega}^n(x,y)=\max_{0\leq i\leq n-1}\{d_{M}(F_{\omega}^ix,F_{\omega}^iy)\},\quad\forall x,y\in M.
\end{equation*}
For any $\epsilon>0$, we denote by $\mathcal{B}_n(\omega,x,\epsilon)$ the open ball of radius $\epsilon$ around $x$ in the $d_{\omega}^n$ metric, i.e.
\begin{equation*}
	\mathcal{B}_n(\omega,x,\epsilon)=\{y\in M:d_{\omega}^n(x,y)<\epsilon\}.
\end{equation*}
Similarly, we denote by $\overline{\mathcal{B}}_n(\omega,x,\epsilon)$ the closed ball of radius $\epsilon$ around $x$ in the $d_{\omega}^n$ metric, i.e.
\begin{equation*}
	\overline{\mathcal{B}}_n(\omega,x,\epsilon)=\{y\in M:d_{\omega}^n(x,y)\leq \epsilon\}.
\end{equation*}

Suppose we are given some subset $Z\subset M$. For any $\epsilon>0$ and $s\in\mathbb{R}$, on the fiber $\{\omega\}\times M$, we define
\begin{equation}\label{def of m(Z,s,omega,N,epsilon)}
	m(Z,s,\omega,N,\epsilon)=\inf_{\Gamma^{\epsilon}_{\omega}}\sum_{i}e^{-sn_i},
\end{equation}
where the infimum is taken over all finite or countable collections $\Gamma_{\omega}^\epsilon=\{\mathcal{B}_{n_i}(\omega,x_i,\epsilon)\}_{i}$
with $Z\subset\cup_{i}\mathcal{B}_{n_i}(\omega,x_i,\epsilon)$ and $\min\{n_i\}\geq N.$ Note that $m(Z,s,\omega,N,\epsilon)$ does not decrease as $N$ increases, hence the following limit exists
\begin{equation}\label{def of m(Z,s,omega,epsilon)}
	m(Z,s,\omega,\epsilon)=\lim_{N\to\infty}m(Z,s,\omega,N,\epsilon).
\end{equation}
By standard arguments, we can show the existence of a critical value of the parameter $s$, named $h_{top}(F,Z,\omega,\epsilon)$, such that
\begin{equation}\label{def of htop(F,Z,omega,epsilon)}
	m(Z,s,\omega,\epsilon)=
	\begin{cases}
		+\infty,&\mbox{if}\ s<h_{top}(F,Z,\omega,\epsilon),\\ 0,&\mbox{if}\ s>h_{top}(F,Z,\omega,\epsilon).
	\end{cases}
\end{equation}
Note that $h_{top}(F,Z,\omega,\epsilon)\in [0,+\infty)$, and $m(Z,h_{top}(F,Z,\omega,\epsilon),\omega,\epsilon)$ could be $+\infty,0$ or some positive finite number.
\begin{lemma}\cite[Lemma 1.1]{variationalprincipleforrandomanosovsystems}\label{lemma htop exists}
	The value $h_{top}(F,Z,\omega,\epsilon)$ does not decrease as $\epsilon$ decreases. Therefore, the following limit exists
	\begin{equation}\label{def of htop(F,Z,omega)}
		h_{top}(F,Z,\omega)=\lim_{\epsilon\to 0}h_{top}(F,Z,\omega,\epsilon)=\sup_{\epsilon>0} h_{top}(F,Z,\omega,\epsilon).
	\end{equation}
\end{lemma}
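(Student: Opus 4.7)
The plan is to establish monotonicity of $\epsilon \mapsto h_{top}(F,Z,\omega,\epsilon)$ directly from the definitions (\ref{def of m(Z,s,omega,N,epsilon)})--(\ref{def of htop(F,Z,omega,epsilon)}), and then invoke the standard fact that a bounded-below monotone net of real numbers has a limit equal to its supremum. The key geometric input is the trivial inclusion of Bowen balls: for any $n \in \mathbb{N}$, $\omega \in \Omega$, $x \in M$ and $0 < \epsilon_1 < \epsilon_2$, one has $\mathcal{B}_n(\omega,x,\epsilon_1) \subset \mathcal{B}_n(\omega,x,\epsilon_2)$, because the fiber Bowen metric $d_\omega^n$ does not depend on $\epsilon$.

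The first step is to show that $m(Z,s,\omega,N,\epsilon)$ is non-increasing in $\epsilon$ for each fixed $Z$, $s$, $\omega$ and $N$. Given any admissible cover $\Gamma_\omega^{\epsilon_1} = \{\mathcal{B}_{n_i}(\omega,x_i,\epsilon_1)\}_i$ of $Z$ with $\min_i n_i \geq N$, the inclusion above shows that $\{\mathcal{B}_{n_i}(\omega,x_i,\epsilon_2)\}_i$ is also an admissible cover (with the same indices $n_i$ and the same weights $e^{-sn_i}$). Taking the infimum over all admissible $\epsilon_1$-covers yields $m(Z,s,\omega,N,\epsilon_2) \leq m(Z,s,\omega,N,\epsilon_1)$. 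Letting $N \to \infty$ in (\ref{def of m(Z,s,omega,epsilon)}) preserves the inequality, giving $m(Z,s,\omega,\epsilon_2) \leq m(Z,s,\omega,\epsilon_1)$ for all $s \in \mathbb{R}$.

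The second step transfers monotonicity to the critical exponent. Fix $\epsilon_1 < \epsilon_2$ and let $s > h_{top}(F,Z,\omega,\epsilon_1)$. By (\ref{def of htop(F,Z,omega,epsilon)}), $m(Z,s,\omega,\epsilon_1) = 0$; by the previous step, $m(Z,s,\omega,\epsilon_2) = 0$ as well, which forces $h_{top}(F,Z,\omega,\epsilon_2) \leq s$. Taking the infimum over such $s$ gives $h_{top}(F,Z,\omega,\epsilon_2) \leq h_{top}(F,Z,\omega,\epsilon_1)$, i.e.\ the value is non-decreasing as $\epsilon$ decreases. Since the net $\{h_{top}(F,Z,\omega,\epsilon)\}_{\epsilon>0}$ is monotone and bounded below by $0$, the limit as $\epsilon \to 0^+$ exists in $[0,+\infty]$ and agrees with $\sup_{\epsilon>0} h_{top}(F,Z,\omega,\epsilon)$, yielding (\ref{def of htop(F,Z,omega)}).

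There is no substantial obstacle here; the argument is a direct Carathéodory-dimension-style computation, parallel to the classical proof that Bowen's topological entropy on non-compact sets is well defined in the deterministic setting (cf.\ \cite{Pesinbook}). The only point requiring minor care is that the weights $e^{-sn_i}$ are intrinsic to the cover and independent of $\epsilon$, so that replacing $\epsilon_1$-balls by larger $\epsilon_2$-balls in a cover is literally cost-free in the sum defining $m(Z,s,\omega,N,\epsilon)$.
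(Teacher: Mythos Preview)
Your argument is correct and is precisely the standard Carath\'eodory-dimension computation one expects here. Note that the paper itself does not supply a proof of this lemma: it is quoted from \cite[Lemma~1.1]{variationalprincipleforrandomanosovsystems}, so there is no in-paper proof to compare against, but your proof is the canonical one (cf.\ \cite{Pesinbook}) and would serve perfectly well as a self-contained justification.
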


The quantity $h_{top}(F,Z,\omega)$ is called \emph{the fiber Bowen's topological entropy of $F$ restricted on $Z$ and the fiber $\{\omega\}\times M$}.

We give some basic properties of fiber Bowen's topological entropy $h_{top}(F,\cdot,\omega).$ 

\begin{lemma}{\cite[Theorem 1.1]{Pesinbook}}\label{lemma topological entropy property}
	The fiber Bowen's topological entropy $h_{top}(F,\cdot,\omega)$ has the following properties
	\begin{enumerate}[(1)]
		\item $h_{top}(F,Z_1,\omega)\leq h_{top}(F,Z_2,\omega)$ for any $Z_1\subset Z_2\subset M$;
		\item $h_{top}(F,Z,\omega)=\sup_ih_{top}(F,Z_i,\omega)$, where $Z=\cup_{i=1}^{\infty}Z_i\subset M$.
	\end{enumerate}
\end{lemma}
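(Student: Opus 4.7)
The plan is to verify both properties directly from the Carathéodory-type construction in formulas \eqref{def of m(Z,s,omega,N,epsilon)}--\eqref{def of htop(F,Z,omega)}. Everything follows from the observation that the set function $m(\cdot,s,\omega,N,\epsilon)$ is an outer measure on subsets of $M$, so monotonicity and countable subadditivity transfer to the critical exponent $h_{top}(F,\cdot,\omega,\epsilon)$ and hence, after letting $\epsilon\to 0$, to $h_{top}(F,\cdot,\omega)$.

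For (1), I would fix $\epsilon>0$, $N\in\N$ and $s\in\mathbb{R}$. If $Z_1\subset Z_2$, then any countable collection $\Gamma_\omega^\epsilon=\{\mathcal{B}_{n_i}(\omega,x_i,\epsilon)\}_i$ with $\min_i n_i\geq N$ that covers $Z_2$ automatically covers $Z_1$, so the infimum in \eqref{def of m(Z,s,omega,N,epsilon)} taken over the larger family of admissible covers of $Z_1$ is no bigger than the one taken over covers of $Z_2$. Thus $m(Z_1,s,\omega,N,\epsilon)\leq m(Z_2,s,\omega,N,\epsilon)$. Letting $N\to\infty$ and reading off the critical value via \eqref{def of htop(F,Z,omega,epsilon)} yields $h_{top}(F,Z_1,\omega,\epsilon)\leq h_{top}(F,Z_2,\omega,\epsilon)$; the supremum over $\epsilon>0$ then gives (1).

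For (2), the inequality $\sup_i h_{top}(F,Z_i,\omega)\leq h_{top}(F,Z,\omega)$ is immediate from (1) since each $Z_i\subset Z$. For the reverse, I fix $s>\sup_i h_{top}(F,Z_i,\omega)$ and any $\epsilon>0$, $N\in\N$, $\eta>0$. By the definition of the critical value, $m(Z_i,s,\omega,\epsilon)=0$ for every $i$, and since $m(Z_i,s,\omega,N,\epsilon)$ is monotone non-decreasing in $N$ with limit $0$, it in fact vanishes for every $N$. So I can pick, for each $i$, a cover $\Gamma_i=\{\mathcal{B}_{n_{i,j}}(\omega,x_{i,j},\epsilon)\}_j$ of $Z_i$ with $\min_j n_{i,j}\geq N$ and $\sum_j e^{-s n_{i,j}}<\eta\cdot 2^{-i}$. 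The union $\Gamma=\bigcup_i \Gamma_i$ is then an admissible cover of $Z=\bigcup_i Z_i$ with total weight $<\eta$, giving $m(Z,s,\omega,N,\epsilon)\leq \eta$. Since $\eta>0$ was arbitrary, $m(Z,s,\omega,\epsilon)=0$, so $h_{top}(F,Z,\omega,\epsilon)\leq s$; sending $\epsilon\to 0$ via Lemma \ref{lemma htop exists} and then $s\downarrow \sup_i h_{top}(F,Z_i,\omega)$ completes (2).

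There is no real obstacle here: the argument is the standard Carathéodory outer-measure bookkeeping adapted from the deterministic setting of \cite[Theorem 1.1]{Pesinbook}. The only mild care needed is ensuring that the geometric $2^{-i}$ summability works uniformly in the threshold $N$, which is handled by exploiting that $m(Z_i,s,\omega,N,\epsilon)=0$ for \emph{every} $N$ once $s>h_{top}(F,Z_i,\omega,\epsilon)$; the fiber structure plays no role since $\omega$ is fixed throughout.
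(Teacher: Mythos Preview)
Your argument is correct and is precisely the standard Carath\'eodory outer-measure computation; the paper itself does not supply a proof of this lemma but simply cites \cite[Theorem~1.1]{Pesinbook}, whose proof in the general Carath\'eodory dimension framework is exactly what you have reproduced with $\omega$ held fixed. One small remark: your justification that $m(Z_i,s,\omega,N,\epsilon)=0$ for \emph{every} $N$ (because a nonnegative nondecreasing sequence with limit $0$ must be identically $0$) is correct and is the only place where a careless reader might stumble, so it is good that you made it explicit.
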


Let us recall the classical definition of fiber topological entropy. Readers are referred to \cite[Section 1.2]{KL06} for more details. A subset $Q\subset M$ is called an \emph{$(\omega,\epsilon,n)$-separated set} of $M$ if for any two different points $x,y\in Q, d_{\omega}^n(x,y)>\epsilon$. For any $n\geq 1$ and $\epsilon>0$, we denote by $Z(\omega,\epsilon,n)$ the largest cardinality of any $(\omega,\epsilon,n)$-separated set of $M$.
\begin{definition}[Fiber topological entropy]
	The fiber topological entropy of $F$ $($or the relative topological entropy of $\Theta)$ is defined by
	\begin{equation}
		h_{top}(F)= h_{top}^{(r)}(\Theta)=\lim_{\epsilon\to 0}\limsup_{n\to\infty}\frac{1}{n}\int\log Z(\omega,\epsilon,n)\dif\mathbb{P}.
	\end{equation}
	In particular, if $\mathbb{P}$ is ergodic with respect to $\theta$, then 
	\begin{displaymath}
		h_{top}(F)=h_{top}^{(r)}(\Theta)=\lim_{\epsilon\to 0}\limsup_{n\to\infty}\frac{1}{n}\log Z(\omega,\epsilon,n).
	\end{displaymath}
\end{definition}

\begin{rem}
	We note that $F_{\omega}\in \mathrm{Diff}^2(M)$ and continuously depends on $\omega$. By the Margulis-Ruelle inequality of random dynamical systems \cite[Theorem 1]{Rulleinequality}, in the settings of this paper, one has
	\begin{equation}\label{entropy is finite}
		h_{top}(F)<\infty.
	\end{equation}
\end{rem}

\begin{rem}\label{remark comparsion}
Let us compare the definition of \emph{fiber Bowen's topological entropy} with the classical definition of \emph{fiber topological entropy}. Note that the  discussion are confined in our target systems. For the classical definition of fiber topological entropy, the following relativized variational principle has already been established (the proof can be found in \cite[Proposition 2.2]{Kifervariationalprinciple})
\begin{equation}\label{classical variational principle}
	\begin{split}
		h_{top}(F)&=\sup\{h_{\mu}(F):\mu\in I_{\Theta}(\Omega\times M)\cap \mathcal{M}_{\mathbb{P}}(\Omega\times M)\}\\&=\sup\{h_{\mu}(F):\mu\in I_{\Theta}(\Omega\times M)\},
	\end{split}
\end{equation}
where $h_{\mu}(F)$ refers to the fiber measure-theoretical entropy of $F$ (see subsection \ref{subsection fiber measure entropy}) and the second equality is due to the unique ergodicity of the driven system $(\Omega,\mathbb{P},\theta)$. Besides, the variational principle between the fiber Bowen's topological entropy and fiber measure-theoretical entropy is a special case of \cite[Theorem B]{variationalprincipleforrandomanosovsystems}. To be more specific, by taking continuous observable $\varphi=0$, we have for $\mathbb{P}$-a.e. $\omega\in\Omega$ the following equality
\begin{displaymath}
	h_{top}(F,M,\omega)=\sup\{h_{\mu}(F):\mu\in I_{\Theta}(\Omega\times M)\}.
\end{displaymath}Therefore, we conclude that the definition of fiber Bowen's topological entropy and classical fiber topological entropy coincides, i.e., for $\mathbb{P}$-a.e. $\omega\in\Omega$, the following equality holds
\begin{equation}\label{equation whole fiber Bowen equals to fiber}
	h_{top}(F,M,\omega)=h_{top}(F).
\end{equation}
\end{rem}

For simplicity, \emph{fiber topological entropy} always refers to the classical definition in \cite{KL06}, and our definition of fiber topological entropy induced by Carathe\'odory structure is always called \emph{fiber Bowen's topological entropy}.

\subsection{Fiber measure-theoretical entropy and upper semi-continuity of entropy map}\label{subsection fiber measure entropy}
In this subsection, we briefly introduce the concept of fiber measure-theoretical entropy, and state the semi-continuity of the entropy map for our target systems.

For any $\mu\in I_{\Theta}(\Omega\times M)$, we denote by $h_{\mu}(\Theta)$ the classical measure-theoretical entropy of dynamical system $(\Omega\times M,\Theta,\mu)$. Denote by $\pi_{\Omega}:\Omega\times M\to\Omega$ the projection onto $\Omega$.

Let $\mathcal{R}=\{R_i\}\subset \mathcal{B}_{\mathbb{P}}(\Omega)\otimes\mathcal{B}(M)$ be a finite or countable partition of $\Omega\times M$, then $\mathcal{R}(\omega)=\{R_i(\omega)\}\subset \mathcal{B}(M)$ is a finite or countable partition of $M$, where $R_i(\omega)=\{x\in M:(\omega,x)\in R_i\}$. For $\mu\in I_{\Theta}(\Omega\times M)$, the conditional entropy of $\mathcal{R}$ given $\sigma$-algebra $\pi_{\Omega}^{-1}(\mathcal{B}_{\mathbb{P}}(\Omega))$ is defined by
\begin{equation*}
	H_{\mu}(\mathcal{R}|\pi_{\Omega}^{-1}(\mathcal{B}_{\mathbb{P}}(\Omega)))=\int_{\Omega} H_{\mu_\omega}(\mathcal{R}(\omega))\dif\mathbb{P}(\omega),
\end{equation*}
where $H_{\mu_{\omega}}(\mathcal{A})$ denotes the usual entropy of $\mathcal{A}$, and $\mu\mapsto\mu_{\omega}$ is the disintegration of $\mu$ with respect to $\mathbb{P}$. \emph{The fiber measure-theoretical entropy of $F$ $($or the relative measure theoretical entropy of $\Theta)$ with respect to $\mu\in I_{\Theta}(\Omega\times M)$} is defined by
\begin{equation*}
	h_{\mu}(F)=h_{\mu}^{(r)}(\Theta)=\sup_{\mathcal{Q}}h_{\mu}(F,\mathcal{Q}),
\end{equation*}
where
\begin{equation*}
	h_{\mu}(F,\mathcal{Q}):=\lim_{n\to\infty}\frac{1}{n}H_{\mu}\left(\bigvee_{i=0}^{n-1}(\Theta)^{-i}\mathcal{Q}\Bigg|\pi^{-1}_{\Omega}(\mathcal{B}_{\mathbb{P}}(\Omega))\right),
\end{equation*}
and the supreme is taken over all finite measurable partitions $\mathcal{Q}$ of $\Omega\times M$.

\begin{rem}
	$\quad$
	\begin{enumerate}
		\item Note the we have $h_{top}(F)<\infty$ in \eqref{entropy is finite} for our target systems. Therefore, according to the relativized variational principle \eqref{classical variational principle}, for any $\mu\in I_{\Theta}(\Omega\times M)$, we have
		\begin{equation*}
			h_{\mu}(F)<\infty.
		\end{equation*}
		\item Besides, we also have the following equality linking the fiber measure-theoretical entropy $h_{\mu}(F)$ of any $\mu\in I_{\Theta}(\Omega\times M)$ with the  measure-theoretical entropy $h_{\mu}(\Theta)$, i.e.
		\begin{equation}\label{Rohlin formula}
			h_{\mu}(\Theta)=h_{\mu}(F)+h_{\mathbb{P}}(\theta),
		\end{equation}which is the celebrated Abramov-Rohlin formula, see \cite{Rohlinformula}.
	\end{enumerate}
	
\end{rem}

To introduce the upper semi-continuity of the entropy map, we need the following lemma. 
\begin{lemma}\cite[Lemma 3.3]{HLL}\label{lemma fiber expansive}
	The target system $\Theta$ is fiber-expansive, i.e., there exists some constant $\eta>0$ such that for any $\omega\in\Omega$, if $d_{M}(F_{\omega}^nx,F_{\omega}^ny)<\eta$ for all $n\in\mathbb{Z}$, then $x=y$. The constant $\eta$ is called a fiber-expansive constant.
\end{lemma}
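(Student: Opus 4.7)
The plan is to adapt the classical Anosov expansiveness argument to the fiber setting, using the uniform hyperbolicity in condition (C2) together with the continuity of the splitting and compactness of $\Omega\times M$.

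First, I would promote the infinitesimal hyperbolic structure to cone fields. Since $(\omega,x)\mapsto E^s_{(\omega,x)}\oplus E^u_{(\omega,x)}$ is a continuous splitting on the compact space $\Omega\times M$, the angle between $E^s$ and $E^u$ is bounded below by some $\alpha_0>0$. For $\alpha<\alpha_0/2$, one defines fiber unstable and stable cones $C^u_{(\omega,x)}(\alpha),C^s_{(\omega,x)}(\alpha)\subset T_xM_\omega$ in the usual way, and by condition (C2) together with continuity, $DF_\omega$ strictly preserves the $C^u$-cones while expanding vectors inside them by at least $e^{\lambda_0}$, and $DF_\omega^{-1}$ does the analogous thing for $C^s$-cones, with all estimates uniform in $(\omega,x)$.

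Second, I would invoke the fiber Hadamard--Perron theorem (a standard graph-transform construction in this setting) to obtain some $\rho_0>0$ such that for every $(\omega,x)\in\Omega\times M$, the fiber local stable and unstable sets
\[
W^s_{\rho_0}(\omega,x)=\{y\in M:d_M(F_\omega^nx,F_\omega^ny)\leq\rho_0\ \text{for all}\ n\geq 0\},
\]
\[
W^u_{\rho_0}(\omega,x)=\{y\in M:d_M(F_\omega^nx,F_\omega^ny)\leq\rho_0\ \text{for all}\ n\leq 0\}
\]
are $C^1$-embedded disks in $M$ of uniform size, tangent at $x$ to $E^s_{(\omega,x)}$ and $E^u_{(\omega,x)}$ respectively. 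Since the splitting is transverse with angle bounded below by $\alpha_0$ and the disks carry uniform $C^1$-bounds, there exists $\eta\in(0,\rho_0)$ such that for every $(\omega,x)$ the intersection $W^s_{\rho_0}(\omega,x)\cap W^u_{\rho_0}(\omega,x)$ inside the $d_M$-ball of radius $\eta$ around $x$ consists of the single point $x$; this is the fiber local product structure.

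Finally, I would take this $\eta$ as the expansive constant. If $d_M(F_\omega^nx,F_\omega^ny)<\eta<\rho_0$ for all $n\in\mathbb{Z}$, the inequalities for $n\geq 0$ place $y\in W^s_{\rho_0}(\omega,x)$ and those for $n\leq 0$ place $y\in W^u_{\rho_0}(\omega,x)$, with $y$ within $d_M$-distance $\eta$ of $x$, forcing $y=x$. The main obstacle is securing the uniform size of the stable/unstable manifolds and the uniform local product structure; both rely on running the graph-transform argument with estimates depending only on $\lambda_0$, the modulus of continuity of the splitting, and uniform $C^2$-bounds on the family $\{F_\omega\}_{\omega\in\Omega}$, all of which are guaranteed by continuity of $\omega\mapsto F_\omega$ into $\mathrm{Diff}^2(M)$ together with the compactness of $\Omega$.
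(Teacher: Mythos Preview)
Your argument is correct and follows the standard route to expansiveness for uniformly hyperbolic systems: uniform cone fields from the continuous splitting, uniform local stable/unstable manifolds via graph transform with constants depending only on $\lambda_0$ and the $C^2$-bounds on $\{F_\omega\}$, and then the observation that a point whose full two-sided orbit stays $\eta$-close lies in both $W^s_{\rho_0}(\omega,x)$ and $W^u_{\rho_0}(\omega,x)$, hence equals $x$ by local product structure.

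Note, however, that the paper does not supply its own proof of this lemma at all: it simply records the statement and attributes it to \cite[Lemma 3.3]{HLL}. So there is no in-paper argument to compare against; your sketch is precisely the kind of proof one would expect to find in that reference, and nothing in it is specific to the skew-product setting beyond the uniformity of constants in $\omega$, which you correctly justify via compactness of $\Omega$ and continuity of $\omega\mapsto F_\omega$ in the $C^2$-topology.
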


We have known that our target system $\Theta$ is fiber-expansive. Therefore, the following lemma is a direct corollary of \cite[Theorem 1.3.5]{KL06}.
\begin{lemma}[Entropy map is upper semi-continuous]\label{lemma upper semi-continuous}
	The entropy map of our target system, i.e.
	\begin{displaymath}
		\mu\longmapsto h_{\mu}(F)\mbox{ for }\mu\in I_{\Theta}(\Omega\times M)
	\end{displaymath}
is upper semi-continuous with respect to the weak* topology on $I_{\Theta}(\Omega\times M)$. In particular, there exists a measure of maximal fiber measure-theoretical entropy, named $\mu_0$.
\end{lemma}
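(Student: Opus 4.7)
The plan is to read this as a direct consequence of Lemma \ref{lemma fiber expansive} together with the standard fiber-expansive upper semi-continuity theorem \cite[Theorem 1.3.5]{KL06}, and then use weak* compactness to extract a maximizer. Since the statement is already flagged as a ``direct corollary'', the real content to spell out is (a) why fiber-expansiveness gives the upper semi-continuity, and (b) why the supremum is attained.

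For part (a), I would fix a fiber-expansive constant $\eta>0$ as provided by \fullref{lemma fiber expansive} and choose a finite Borel partition $\mathcal{Q}=\{Q_1,\dots,Q_\ell\}$ of $\Omega\times M$ whose fibers $Q_i(\omega)$ all have diameter strictly less than $\eta$ in $M$. The key step is to show that such a $\mathcal{Q}$ is ``fiber-generating'': for every $\mu\in I_\Theta(\Omega\times M)$,
\begin{equation*}
h_\mu(F)=h_\mu(F,\mathcal{Q})=\lim_{n\to\infty}\frac{1}{n}H_\mu\!\left(\bigvee_{i=0}^{n-1}\Theta^{-i}\mathcal{Q}\,\Big|\,\pi_\Omega^{-1}(\mathcal{B}_\mathbb{P}(\Omega))\right).
\end{equation*}
This is exactly the conclusion of the fiber-expansive generator argument in \cite[Theorem 1.3.5]{KL06}: fiber-expansiveness forces the joins $\bigvee_{i=-n}^{n}\Theta^{-i}\mathcal{Q}$ to generate the fiber $\sigma$-algebra modulo $\mu$, and by the fiber Kolmogorov--Sinai theorem one obtains $h_\mu(F)=h_\mu(F,\mathcal{Q})$. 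Then for each fixed $n$ the map $\mu\mapsto\frac{1}{n}H_\mu(\bigvee_{i=0}^{n-1}\Theta^{-i}\mathcal{Q}\mid\pi_\Omega^{-1}(\mathcal{B}_\mathbb{P}(\Omega)))$ is upper semi-continuous in the weak* topology (standard, using that the atoms of $\mathcal{Q}$ can be arranged to have $\mu$-null boundary for any specified $\mu$, combined with concavity of $-x\log x$); taking the infimum over $n\ge 1$ preserves upper semi-continuity and yields upper semi-continuity of $\mu\mapsto h_\mu(F)$.

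For part (b), the set $I_\Theta(\Omega\times M)$ is weak* compact because $\Omega\times M$ is compact and ``being $\Theta$-invariant'' is a weak* closed condition. An upper semi-continuous function on a nonempty compact set attains its supremum, so there is $\mu_0\in I_\Theta(\Omega\times M)$ with $h_{\mu_0}(F)=\sup\{h_\mu(F):\mu\in I_\Theta(\Omega\times M)\}$; by the relativized variational principle \eqref{classical variational principle} this supremum equals $h_{top}(F)$, which is finite by \eqref{entropy is finite}.

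The main obstacle I anticipate is the verification that $\mathcal{Q}$ generates the fiber $\sigma$-algebra up to $\mu$-null sets: fiber-expansiveness gives $\bigcap_{n\in\mathbb{Z}}F_\omega^{-n}\mathcal{Q}(\theta^n\omega)=\{\text{points}\}$ only modulo the diagonal at distance $<\eta$, so one has to be mildly careful to handle boundaries of partition elements (a standard perturbation to a $\mu$-continuity partition) and to pass through the disintegration $\mu_\omega$ so that $H_\mu(\cdot\mid\pi_\Omega^{-1}(\mathcal{B}_\mathbb{P}(\Omega)))=\int H_{\mu_\omega}(\cdot)\,d\mathbb{P}(\omega)$ can be applied fiberwise. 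Once this is in place, everything else is the classical Misiurewicz-type proof lifted to the skew-product setting exactly as in \cite{KL06}.
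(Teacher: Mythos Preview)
Your proposal is correct and follows exactly the paper's approach: the paper does not give a proof but simply observes that fiber-expansiveness (Lemma~\ref{lemma fiber expansive}) makes the lemma a direct corollary of \cite[Theorem 1.3.5]{KL06}. Your sketch just unpacks that citation and adds the routine weak* compactness argument for the existence of $\mu_0$, which the paper leaves implicit.
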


\section{Fiber Bowen's topological entropy of irregular set}\label{sec proof}
In this section, we give the proof of theorem \ref{thm irregular set}. The proof of all lemmas are collected in section \ref{sec proof of lemmas}.

Without loss of generality, we assume that $I_{\varphi}(\omega^*)\neq\varnothing$ for some $\omega^*\in\Omega$, i.e., there exists some $x^*\in M$ such that
\begin{equation}\label{irregular set is nonempty}
	(\omega^*,x^*)\in I_{\varphi}.
\end{equation}  Therefore, we remain to prove that for $\mathbb{P}$-a.e. $\omega\in\Omega,$ the $\omega$-fiber irregular set $I_{\varphi}(\omega)$ is nonempty and the irregular set $I_{\varphi}$ carries full fiber  topological entropy, i.e.
\begin{displaymath}
	h_{top}(F, I_{\varphi}(\omega),\omega)= h_{top}(F).
\end{displaymath}

According to \eqref{equation whole fiber Bowen equals to fiber}, $h_{top}(F)$ coincides with $h_{top}(F,M,\omega)$ for $\mathbb{P}$-a.e. $\omega\in\Omega$. Notice that  $I_{\varphi}(\omega)$ is a subset of $M$. Therefore, by applying lemma \ref{lemma topological entropy property}, one has for $\mathbb{P}$-a.e. $\omega\in\Omega$
\begin{displaymath}
h_{top}(F, I_{\varphi}(\omega),\omega)\leq h_{top}(F,M,\omega)=h_{top}(F).
\end{displaymath}Hence, it suffices to prove that $h_{top}(F, I_{\varphi}(\omega),\omega)\geq h_{top}(F)$ for $\mathbb{P}$-a.e. $\omega\in\Omega$. Combining lemma \ref{lemma upper semi-continuous} with \eqref{classical variational principle}, there exists a measure $\mu_0$ with maximal fiber measure-theoretical entropy, i.e.
\begin{equation}\label{def of hmu0}
	h_{top}(F)= h_{\mu_0}(F).
\end{equation}
Hence, it remains to prove that  for $\mathbb{P}$-a.e. $\omega\in\Omega$
\begin{equation}\label{htop is greater that hmu0}
	h_{top}(F,I_{\varphi}(\omega),\omega)\geq h_{\mu_0}(F).
\end{equation}

For any $\varphi\in C(\Omega\times M,\mathbb{R}),\alpha\in\mathbb{R},\delta>0,\omega\in\Omega$ and $n\in\mathbb{N}$, the \emph{fiber deviation set} $P(\alpha,\delta,n,\omega)$ is defined by
\begin{equation*}
	P(\alpha,\delta,n,\omega)=\left\{x\in M:\left|\frac{1}{n}\sum_{i=0}^{n-1}\varphi(\Theta^i(\omega,x))-\alpha\right|<\delta\right\}.
\end{equation*}
For any $\epsilon>0$, denote by $M(\alpha,\delta,n,\epsilon,\omega)$ the largest cardinality of $(\omega,\epsilon,n)$-separated set in the fiber deviation set $P(\alpha,\delta,n,\omega)$.

The standard techniques to prove \eqref{htop is greater that hmu0} is to construct Moran-like fractals contained in $I_{\varphi}(\omega)$ and apply entropy distribution argument (for example, see \cite[Section 5]{TakensVerbitsky2003variationalprinciple}). However, we will not just follow this standard procedure in the remaining proof. Our innovation is the following lemma \ref{lemma deviation set}, which is devoted to investigate how to construct some appropriate deviation set and estimate the largest cardinality of specific separated set in it. The proof can be found in \cite[Subsection 5.3]{variationalprincipleforrandomanosovsystems}.
\begin{lemma}[Fiber deviation set lemma]\label{lemma deviation set}
Given $\varphi\in C(\Omega\times M,\mathbb{R})$ and $\mu\in I_{\Theta}(\Omega\times M)$, let $\alpha=\int\varphi \dif\mu$. For any $\gamma>0$ and positive number $\delta<\frac{1}{2}\min\{\gamma,\eta\}$, there exists a $\mathbb{P}$-full measure set $\Omega_{\delta}$ such that for any $N\in\mathbb{N},$ there exists a measurable function $\hat{n}:\Omega_{\delta}\to\mathbb{N}$ satisfying
\begin{enumerate}
	\item $\hat{n}$ has a lower bound: for any $\omega\in\Omega_{\delta}$, we have $\hat{n}(\omega)\geq N$;
	\item  fiber deviation set is nonempty: for any $\omega\in\Omega_{\delta}, P(\alpha, 4\delta,\hat{n}(\omega),\omega)\neq\varnothing$;
	\item estimation for the largest cardinality of $(\omega,\hat{n}(\omega),\eta/2)$-separated set in the fiber deviation set:
	\begin{equation}\label{cardinality of fiber deviation set}
		\frac{1}{\hat{n}(\omega)}\log M\left(\alpha,4\delta,\hat{n}(\omega),\frac{\eta}{2},\omega\right)\geq h_{\mu}(F)-4\gamma.
	\end{equation}
\end{enumerate}
\end{lemma}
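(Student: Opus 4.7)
First, I would reduce to the case where $\mu$ is $\Theta$-ergodic. Using the ergodic decomposition $\mu = \int \mu^\xi\, d\nu(\xi)$, affinity of the fiber entropy map $\xi \mapsto h_{\mu^\xi}(F)$ combined with linearity of $\xi \mapsto \int \varphi\, d\mu^\xi$ lets me choose an ergodic component $\mu^\xi$ satisfying $h_{\mu^\xi}(F) \geq h_\mu(F) - \gamma$ and $|\int \varphi\, d\mu^\xi - \alpha| \leq \delta$; the slack $4\delta,4\gamma$ in the target conclusion is designed to absorb this substitution. Henceforth assume $\mu$ is $\Theta$-ergodic with fiber disintegration $\mu = \int \mu_\omega\, d\mathbb{P}(\omega)$.

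Next, I would invoke two almost-sure limits. Birkhoff's ergodic theorem applied to $(\Omega \times M, \Theta, \mu)$ and $\varphi$ gives
\begin{equation*}
\lim_{n \to \infty} \frac{1}{n} \sum_{i=0}^{n-1} \varphi(\Theta^i(\omega, x)) = \alpha \qquad \mu\text{-a.e.}
\end{equation*}
Since $\eta/2$ lies below the fiber-expansive scale (Lemma \ref{lemma fiber expansive}), the fiber Brin--Katok entropy formula for random dynamical systems yields
\begin{equation*}
\lim_{n \to \infty} -\frac{1}{n} \log \mu_\omega(\mathcal{B}_n(\omega, x, \eta/2)) = h_\mu(F) \qquad \mu\text{-a.e.}
\end{equation*}
Intersecting the two $\mu$-full sets and applying Fubini to the disintegration produces a $\mathbb{P}$-full set $\Omega_\delta \subset \Omega$ on whose fibers both convergences hold $\mu_\omega$-almost surely; this $\Omega_\delta$ depends only on $\delta,\gamma$ and not on $N$.

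With these in hand, fix $\omega \in \Omega_\delta$ and, if necessary, enlarge the given $N$ by a fixed amount so that $(\log 2)/N \leq \gamma$. For each $n \geq N$ put
\begin{equation*}
K_n(\omega) = \Bigl\{ x \in M :\Bigl|\tfrac{1}{n}\textstyle\sum_{i=0}^{n-1} \varphi(\Theta^i(\omega, x)) - \alpha\Bigr| < 2\delta,\; \mu_\omega(\overline{\mathcal{B}}_n(\omega, x, \eta/2)) \leq e^{-n(h_\mu(F) - 3\gamma)} \Bigr\}.
\end{equation*}
Egorov's theorem applied fiberwise to the two limits above forces $\mu_\omega(K_n(\omega)) \to 1$, so I would define $\hat{n}(\omega)$ to be the least $n \geq N$ with $\mu_\omega(K_n(\omega)) \geq 1/2$; joint $\omega$-measurability of the disintegration makes $\hat{n}$ measurable. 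Then (1) and (2) are automatic, since $K_{\hat{n}(\omega)}(\omega) \subset P(\alpha, 4\delta, \hat{n}(\omega), \omega)$ and $\mu_\omega(K_{\hat{n}(\omega)}(\omega)) \geq 1/2 > 0$. For (3), pick a maximal $(\omega, \eta/2, \hat{n}(\omega))$-separated subset $S \subset K_{\hat{n}(\omega)}(\omega)$; by maximality the closed Bowen balls $\{\overline{\mathcal{B}}_{\hat{n}(\omega)}(\omega, y, \eta/2)\}_{y \in S}$ cover $K_{\hat{n}(\omega)}(\omega)$, so
\begin{equation*}
\tfrac{1}{2} \leq \mu_\omega(K_{\hat{n}(\omega)}(\omega)) \leq |S| \cdot e^{-\hat{n}(\omega)(h_\mu(F) - 3\gamma)}.
\end{equation*}
Taking logarithms and dividing by $\hat{n}(\omega)$ gives $\hat{n}(\omega)^{-1}\log M(\alpha, 4\delta, \hat{n}(\omega), \eta/2, \omega) \geq h_\mu(F) - 3\gamma - (\log 2)/\hat{n}(\omega) \geq h_\mu(F) - 4\gamma$, establishing (3).

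The principal obstacle is supplying the fiber Brin--Katok formula in exactly this pointwise, fixed-radius form, and securing enough joint measurability of $(\omega, x) \mapsto \mu_\omega(\overline{\mathcal{B}}_n(\omega, x, \eta/2))$ that $\hat{n}(\omega)$ can be selected measurably. Fiber expansiveness (Lemma \ref{lemma fiber expansive}) is what lets one dispense with an outer $\lim_{\epsilon \to 0}$ in Brin--Katok, and continuity of $\omega \mapsto F_\omega$ together with the measurable structure of the disintegration underwrites the joint measurability; both are standard in the random-dynamics toolkit but must be invoked with care. Once they are in place, the Katok-style covering count above is routine and simultaneously delivers non-emptiness of the fiber deviation set and the $h_\mu(F) - 4\gamma$ lower bound on the exponential growth rate of separated points inside it.
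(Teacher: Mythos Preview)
The paper does not prove this lemma in-text; it cites \cite[Subsection 5.3]{variationalprincipleforrandomanosovsystems}. So there is no in-paper argument to compare against directly. That said, your proposal has a substantive gap at the very first step.

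Your reduction to the ergodic case is incorrect as stated. You claim that affinity of $\xi\mapsto h_{\mu^\xi}(F)$ together with linearity of $\xi\mapsto\int\varphi\,d\mu^\xi$ lets you select a \emph{single} ergodic component $\mu^\xi$ satisfying simultaneously $h_{\mu^\xi}(F)\geq h_\mu(F)-\gamma$ and $\bigl|\int\varphi\,d\mu^\xi-\alpha\bigr|\leq\delta$. This fails in general: take $\mu=\tfrac12\mu_1+\tfrac12\mu_2$ with $\mu_1,\mu_2$ ergodic, $h_{\mu_1}(F)=H>0$, $h_{\mu_2}(F)=0$, $\int\varphi\,d\mu_1=1$, $\int\varphi\,d\mu_2=-1$, so $h_\mu(F)=H/2$ and $\alpha=0$. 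For $\gamma<H/2$ the only component with entropy at least $h_\mu(F)-\gamma$ is $\mu_1$, yet $\bigl|\int\varphi\,d\mu_1-\alpha\bigr|=1$, which defeats any prescribed $\delta$. The paper itself flags exactly this obstruction in the remark following Lemma~\ref{lemma different integral values}: for these skew products one cannot assume ergodic measures are entropy-dense, which is precisely why the deviation-set lemma must cope with non-ergodic $\mu$ directly.

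The standard repair---and almost certainly what the cited reference does, in the spirit of Takens--Verbitskiy and Thompson---is to approximate $\mu$ by a finite convex combination $\sum_{i=1}^k a_i\mu_i$ of ergodic measures with both $\sum_i a_i h_{\mu_i}(F)$ close to $h_\mu(F)$ and $\sum_i a_i\int\varphi\,d\mu_i$ close to $\alpha$ (this \emph{is} always possible by discretizing the ergodic decomposition). For each $\mu_i$ one runs your Birkhoff/Brin--Katok/Katok-counting argument to produce large $(\omega,\eta/2,n_i)$-separated sets inside $P(\alpha_i,\cdot,n_i,\omega)$, and then uses the fiber specification property (Lemma~\ref{thm fiber specification}) to concatenate pieces of length proportional to $a_i$ from each. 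The shadowing points so obtained have Birkhoff average close to $\sum_i a_i\alpha_i\approx\alpha$, and their cardinality is roughly $\exp\bigl(n\sum_i a_i h_{\mu_i}(F)\bigr)\approx\exp(n\,h_\mu(F))$. Your Katok-style covering count for a single ergodic measure is the right engine; it just has to be applied componentwise and then glued via specification, not after a one-step reduction to a single ergodic component.
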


\begin{rem}\label{remark difficultis and methods}
The main difference between proof for skew product transformations and proof for deterministic systems lies on the length function $\hat{n}$. In \emph{deterministic} cases, length functions, corresponding to length of orbits of points in separated sets of deviation sets, are constant on each level of Moran fractals. Therefore, these length functions are automatically bounded. However, things become totally different in cases of \emph{skew product systems}. Notice that it's proven that $\hat{n}$ is only measurably depends on $\omega$. Hence, there is no hope to bound these length functions if one just follow classical constructions of Moran fractals in \cite{TakensVerbitsky2003variationalprinciple}. By utilizing Birkhoff's ergodic theorem and Lusin's theorem, one can see that our construction of fiber Moran-like fractals in subsection \ref{subsetion construction of fiber Moran-like fractals} is essentially different from those in \cite{TakensVerbitsky2003variationalprinciple,Thompson2009Variationalprinciplepressure}.

\end{rem}
We continue our proof with the following lemma. The proof of the lemma is addressed in section \ref{sec proof of lemmas}. Recall that $\mu_0$ is the measure of maximal fiber measure-theoretical entropy in \eqref{def of hmu0}.
\begin{lemma}\label{lemma different integral values}
	Given $\varphi\in C(\Omega\times M,\mathbb{R})$, for any $\gamma>0$, there exists some $\mu_1\in I_{\Theta}(\Omega\times M)$ such that
	\begin{enumerate}
		\item $\varphi$ has different integral values with respect to $\mu_0$ and $\mu_1$, i.e.
		\begin{displaymath}
			\alpha_0=\int\varphi\dif\mu_0\neq\int\varphi\dif\mu_1=\alpha_1,
		\end{displaymath}
	where $\mu_0$ is a measure with maximal fiber measure-theoretical entropy.
		\item the fiber measure-theoretical entropies of $\mu_1$ and $\mu_0$ can be arbitrarily close, i.e.
		\begin{displaymath}
		h_{\mu_0}(F)-\gamma\leq h_{\mu_1}(F)\leq h_{\mu_0}(F).
		\end{displaymath}
	\end{enumerate}
\end{lemma}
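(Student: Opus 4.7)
The plan is to construct $\mu_1$ as a small convex perturbation of $\mu_0$ by an auxiliary invariant measure $\nu$ whose $\varphi$-integral differs from $\alpha_0$. Since $\mu_0$ is a measure of maximal fiber entropy, the upper bound $h_{\mu_1}(F)\le h_{\mu_0}(F)$ in (2) is automatic; the real work is producing $\nu$ and controlling how much entropy the perturbation costs.

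First I would extract $\nu$ from the hypothesis $I_{\varphi}\ne\varnothing$. Fix the irregular point $(\omega^*,x^*)\in I_{\varphi}$ from \eqref{irregular set is nonempty} and consider the empirical measures
\[
\mu_n \;=\; \frac{1}{n}\sum_{i=0}^{n-1}\delta_{\Theta^i(\omega^*,x^*)}.
\]
By compactness of $\Omega\times M$ these have weak-* accumulation points, and any such accumulation point is $\Theta$-invariant, hence lies in $I_\Theta(\Omega\times M)$ (unique ergodicity of $\theta$ forces the $\Omega$-marginal to be $\mathbb{P}$). Because $(\omega^*,x^*)\in I_{\varphi}$, the numerical sequence $\int\varphi\,d\mu_n=\frac{1}{n}\sum_{i=0}^{n-1}\varphi(\Theta^i(\omega^*,x^*))$ does not converge, so two accumulation points $\nu_1,\nu_2\in I_\Theta(\Omega\times M)$ satisfy $\int\varphi\,d\nu_1\ne \int\varphi\,d\nu_2$. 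At least one of them, which I call $\nu$, obeys $\int\varphi\,d\nu\ne\alpha_0$.

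Next I set $\mu_1 = t\nu + (1-t)\mu_0$ for a small $t\in(0,1)$ to be fixed at the end. Then $\mu_1\in I_\Theta(\Omega\times M)$, and linearity of the integral gives
\[
\int\varphi\,d\mu_1 \;=\; t\!\int\varphi\,d\nu + (1-t)\alpha_0 \;\ne\; \alpha_0,
\]
establishing (1). For (2), I use the fact that the fiber measure-theoretical entropy is affine on $I_\Theta(\Omega\times M)$: this is classical for $h_\mu(\Theta)$, and descends to $h_\mu(F)$ through the Abramov--Rohlin formula \eqref{Rohlin formula}, since the $\Omega$-marginal of every $\Theta$-invariant measure is $\mathbb{P}$ (so $h_{\mathbb{P}}(\theta)$ is a fixed constant that subtracts off affinely). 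Consequently
\[
h_{\mu_1}(F) \;=\; t\,h_{\nu}(F) + (1-t)\,h_{\mu_0}(F) \;\ge\; (1-t)\,h_{\mu_0}(F) \;=\; h_{\mu_0}(F) - t\,h_{\mu_0}(F).
\]
Choosing $t\in(0,1)$ small enough that $t\,h_{\mu_0}(F)\le \gamma$ (which is possible since $h_{\mu_0}(F)<\infty$ by \eqref{entropy is finite}) yields $h_{\mu_1}(F)\ge h_{\mu_0}(F)-\gamma$, while the maximality of $\mu_0$ gives $h_{\mu_1}(F)\le h_{\mu_0}(F)$.

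The only non-trivial ingredient is the existence of $\nu$, which is exactly what the presence of an irregular point delivers via the empirical-measure argument; I expect this to be the main conceptual obstacle, but it is essentially forced by the definition of $I_\varphi$ together with the compactness of $\Omega\times M$. The affinity of $h_\mu(F)$ is the only other point to justify, and it is immediate from \eqref{Rohlin formula} and the classical affinity of the measure-theoretic entropy of $(\Omega\times M,\Theta)$.
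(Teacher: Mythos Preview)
Your proposal is correct and follows essentially the same route as the paper: from the irregular point $(\omega^*,x^*)$ you extract, via empirical measures and weak-$*$ compactness, a $\Theta$-invariant measure $\nu$ with $\int\varphi\,d\nu\ne\alpha_0$, then set $\mu_1=(1-t)\mu_0+t\nu$ and use affinity of $h_\mu(F)$ (via Abramov--Rohlin) together with $h_{\mu_0}(F)<\infty$ to control the entropy loss. The only cosmetic difference is that the paper selects a single subsequence along which the Birkhoff averages converge to some $\alpha^*\ne\alpha_0$, whereas you argue that two distinct accumulation points exist and pick one; both arguments are equivalent.
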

\begin{rem}
	One of the difficulties in our proof is that measures $\mu_0$ and $\mu_1$ in lemma \ref{lemma different integral values} are only $\Theta$-invariant, and we can not guarantee that they are ergodic. While in the proof of irregular sets carry full topological entropy for systems with specification property (see \cite{Thompson2010irregularspecification}) or with almost specification property (see \cite{Thompson2012almostspecification}), $\mu_0$ and $\mu_1$ can be chosen to be ergodic since ergodic measures are entropy-dense for systems with approximate product property(see \cite[Definition 2.7, Theorem 2.1
	]{PfisterSullivanlargedeviationwithoutspecification2005}).
	
	The advantage of choosing ergodic measures is that they can make the construction of Moran fractals easier. To be more specific, each ergodic measure has generic point. Therefore, one can construct irregular points by shadowing pieces of orbits of generic points consecutively. However, for our target systems, we can not guarantee that ergodic measures are entropy dense. Therefore, for $\Theta$-invariant measures $\mu_0$ and $\mu_1$, one must utilize the \emph{fiber deviation set lemma} to construct irregular points.
\end{rem}

 The main proof are divided into several steps. In the following subsections, we will formulate our proof and construction in details.
\subsection{Construction of fiber Moran-like fractals}\label{subsetion construction of fiber Moran-like fractals}
We begin our construction of points in the irregular set in the following proof, which is inspired by the construction of Moran fractals in \cite{TakensVerbitsky2003variationalprinciple} and the construction of irregular points in \cite{Thompson2010irregularspecification}.

For simplicity, we define $\rho(k):\mathbb{N}\to\{0,1\}$ by $\rho(k)\equiv k(\mathrm{mod}\ 2)$.
Let $\{\delta_k\}_{k=1}^{\infty}$ be a sequence of positive numbers strictly decreasing to $0$, where $\delta_1<\frac{1}{2}\min\{\gamma,\eta\}$. By theorem \ref{thm fiber specification}, there exists a sequence $\{m_k\}_{k=1}^{\infty}\subset\mathbb{N}$, such that any $m_k=m(\eta/2^{4+k})$-spaced $\omega$-specification is $(\omega,\eta/{2^{4+k}})$-shadowed by some point in $M$. Without loss of generality, we assume that
\begin{equation}\label{mk tends to infinity}
\mbox{$m_k\to\infty$  as $k\to\infty$.}
\end{equation}

From now on, we fix some sufficiently small $\gamma>0$. By utilizing lemma \ref{lemma deviation set} and lemma \ref{lemma different integral values}, we have a sequence of $\mathbb{P}$-full measure set $\{\Omega_{\delta_k}\}_{k=1}^{\infty}$ and a sequence of measurable functions $\hat{n}_k:\Omega_{\delta_k}\to\mathbb{N}$, such that
\begin{equation}\label{lower bound of nk hat}
	\hat{n}_k\geq 2^{m_k}.
\end{equation}
Furthermore, for any $k\in\mathbb{N}$ and $\omega\in\Omega_{\delta_k}$, we have
\begin{equation}
	\begin{cases}
		P(\alpha_{\rho(k)},4\delta_k,\hat{n}_k(\omega),\omega)\neq\varnothing,\\\dfrac{1}{\hat{n}_k(\omega)}\log M(\alpha_{\rho(k)},4\delta_k,\hat{n}_k(\omega),\omega)\geq\min\{h_{\mu_0}(F)-4\gamma,h_{\mu_1}(F)-4\gamma\}\geq h_{\mu_0}(F)-5\gamma.
	\end{cases}\label{estimation for odd even k}
\end{equation}
Note also that $\hat{n}_k:\cap_{j\geq 1}\Omega_{\delta_j}\to\mathbb{N}$ is measurable and $\mathbb{P}(\cap_{j\geq 1}\Omega_{\delta_j})=1$.

Let $\{\xi_k\}_{k=1}^\infty$ be a sequence of positive numbers strictly decreasing to 0 with $\xi_1<1$. Now according to Lusin's theorem, for any $k\in\mathbb{N},$ there exists a compact set  $\Omega_k\subset\cap_{j\geq 1}\Omega_{\delta_j}$ with $\mathbb{P}(\Omega_k)\geq 1-{\xi_k}/{2}$, such that $\hat{n}_k(\omega)$ is continuous on $\Omega_k$ . Denote
\begin{equation}\label{nk hat has maximum}
	\hat{n}_k^M=\max_{\omega\in\Omega_k}\hat{n}_k(\omega).
\end{equation}
By Birkhoff's ergodic theorem, there exists a $\mathbb{P}$-full measure set $\Omega_k^\prime,$ such that for any $\omega\in\Omega_k^\prime$, we have
\begin{equation}\label{probability of Omega k is greater than}
	\lim_{n\to\infty}\frac{1}{n}\sum_{i=0}^{n-1}1_{\Omega_k}(\theta^i\omega)=\mathbb{P}(\Omega_k)\geq 1-\frac{\xi_k}{2}.
\end{equation}
We define
\begin{equation}\label{Omega gamma hat}
\widehat{\Omega}_\gamma:=\cap_{k\geq1}\Omega_k^\prime,
\end{equation} in the left of this section, we are going to prove theorem \ref{thm irregular set} by constructing Moran-like fractals contained in the irregular set on each fiber $\{\omega\}\times M$ for $\omega\in\widehat{\Omega}_\gamma$. Note that $\widehat{\Omega}_{\gamma}$ is a $\mathbb{P}$-full measure as a countable intersection of $\mathbb{P}$-full measure sets.

From now on, we fix any $\omega\in\widehat{\Omega}_\gamma$.

\emph{Step 1. Construction of intermediate sets.} We start by choosing two sequences of positive integers depending on $\omega$, named $\{L_k(\omega)\}_{k\in\mathbb{N}}$ and $\{N_k(\omega)\}_{k\in\mathbb{N}}.$

We define $\{L_k(\omega)\}_{k\in\mathbb{N}}$ inductively. For $k=1,$ by (\ref{probability of Omega k is greater than}), we can pick $L_1(\omega)\in\mathbb{N}$ such that
\begin{equation}\label{Omega1,xi1}	\frac{1}{n}\sum_{i=0}^{n-1}1_{\Omega_1}(\theta^i\omega)\geq 1-\xi_1,\quad\forall n\geq L_1(\omega).
\end{equation}
Once $L_k(\omega)$ is defined, by (\ref{probability of Omega k is greater than}), we can pick $L_{k+1}(\omega)$ satisfying $L_{k+1}(\omega)> L_k(\omega),$ and
\begin{equation}\label{pick L k+1}
	\dfrac{1}{n}\displaystyle\sum_{i=0}^{n-1}1_{\Omega_{k+1}}(\theta^i\omega)\geq 1-\xi_{k+1},\quad\forall n\geq L_{k+1}(\omega).
\end{equation}
Now, let $N_1(\omega)=L_2(\omega)$, and we pick $N_k(\omega)\in\mathbb{N}$ such that
\begin{equation}\label{construction of Nk}
	\begin{cases}
		N_k(\omega)\geq \max\{2^{\hat{n}_{k+1}^M+m_{k+1}},L_{k+1}(\omega)\}, &\forall k\geq 2,\\ N_{k+1}(\omega)\geq 2^{\sum_{j=1}^k(N_j(\omega)\cdot(\hat{n}_j^M+m_j)\cdot\prod_{i=j}^{k}(1-\xi_i)^{-1})},&\forall k\geq 1.
	\end{cases}
\end{equation}The (\ref{construction of Nk}) ensures that
\begin{align*}
	\lim_{k\to\infty}\frac{\hat{n}_{k+1}^M+m_{k+1}}{N_k(\omega)} =0,\quad \lim_{k\to \infty}\frac{\sum_{j=1}^k(N_j(\omega)\cdot(\hat{n}_j^M+m_j)\cdot\prod_{i=j}^{k}(1-\xi_i)^{-1})}{N_{k+1}(\omega)}=0.
\end{align*}

First, we construct intermediate set $D_1(\omega)$. Let $l^{0,1}(\omega)$ be the first integer $i\geq 0$ such that $\theta^i\omega\in\Omega_1.$ We denote $T_0^1(\omega)=l^{0,1}(\omega).$ Let $l_1^1(\omega)$ be the first integer $i\geq 0$ such that
\begin{displaymath}
	\theta^{T^1_0(\omega)+\hat{n}_1(\theta^{T^1_0(\omega)}\omega)+m_1+i}\omega\in\Omega_1,
\end{displaymath}
where $\hat{n}_1(\theta^{T_0^1(\omega)}\omega)$ is well defined and bounded by $\hat{n}_1^M$ since $\theta^{T_0^1(\omega)}\omega\in\Omega_1.$ Denote $T_1^1(\omega)=T_0^1(\omega)+\hat{n}_1(\theta^{T_0^1}\omega)+m_1+l_1^{1}(\omega)$. For $k\in\{1,\dots,N_1(\omega)-2\}$, suppose $T_k^1(\omega)$ is already defined and $\theta^{T_k^1(\omega)}\omega\in\Omega_1$, we let $l_{k+1}^1(\omega)$ be the first integer $i\geq 0$ such that
\begin{displaymath} \theta^{T_k^1(\omega)+\hat{n}_1(\theta^{T_k^1(\omega)}\omega)+m_1+i}\in\Omega_1,
\end{displaymath}
and denote $$T_{k+1}^1(\omega)=T_k^1(\omega)+\hat{n}_1({\theta^{T_k^1}\omega})+m_1+l_{k+1}^1(\omega).$$ Finally, we define $$T_{N_1(\omega)}^1(\omega)=T_{N_1(\omega)-1}^1(\omega)+\hat{n}_1(\theta^{T^1_{N_1(\omega)-1}(\omega)}\omega).$$
We point out that  in orbit $\{\omega,\cdots,\theta^{T_{N_1(\omega)}^1(\omega)}\omega\}$ , there are at least $T_0^1(\omega)+l_1^1(\omega)+\cdots+l_{N_1(\omega)-1}^1(\omega)$ times not lying in $\Omega_1.$ We note that $T_{N_1(\omega)}^1(\omega)\geq N_1(\omega)\geq L_2(\omega)\geq L_1(\omega)$, therefore, by (\ref{Omega1,xi1}),
\begin{equation}\label{estimate 1 bad point}
	T_0^1(\omega)+l_1^1(\omega)+\cdots+l_{N_1(\omega)-1}^1(\omega)\leq \xi_1 T_{N_1(\omega)}^1(\omega).
\end{equation}

By our construction, we have $\theta^{T_k^1(\omega)}\omega\in\Omega_1\subset\cap_{j\geq 1}\Omega_{\delta_j}$ for $k\in\{0,\dots,N_1(\omega)-1\}$. Therefore, combining (\ref{lower bound of nk hat}), (\ref{estimation for odd even k}) and (\ref{nk hat has maximum}) together, one concludes that
\begin{displaymath}
\begin{cases}
	2^{m_1}\leq \hat{n}_1(\theta^{T_k^1(\omega)}\omega)\leq\hat{n}_1^M;\\
	P(\alpha_{\rho(1)},4\delta_1,\hat{n}_1(\theta^{T_k^1(\omega)}\omega),\theta^{T_k^1(\omega)}\omega)\neq\varnothing;\\ 	\dfrac{1}{\hat{n}_1(\theta^{T_k^1(\omega)}\omega)}\log M(\alpha_{\rho(1)},4\delta_1,\hat{n}_1(\theta^{T_k^1(\omega)}\omega),{\eta}/{2},\theta^{T_k^1(\omega)}\omega)\geq h_{\mu_0}(F)-5\gamma.
	\end{cases}
\end{displaymath}
Let $C_1(\theta^{T_k^1(\omega)}\omega)\subset P(\alpha_{\rho(1)},4\delta_1,\hat{n}_1(\theta^{T_k^1(\omega)}\omega),\theta^{T_k^1(\omega)}\omega)$ be some $(\theta^{T_k^1(\omega)}\omega,{\eta}/{2},\hat{n}_1(\theta^{T_k^1(\omega)}\omega))$-separated set with largest cardinality, i.e. $$\# C_1(\theta^{T_k^1(\omega)}\omega)=M\left(\alpha_{\rho(1)},4\delta_1,\hat{n}_1(\theta^{T_k^1(\omega)}\omega),\frac{\eta}{2},\theta^{T_k^1(\omega)}\omega\right).$$

Now for any $N_1(\omega)$-tuple $(x_0^1,\dots,x_{N_1(\omega)-1}^1)\in C_1(\theta^{T_0^1(\omega)}\omega)\times\cdots\times C_1(\theta^{T_{N_1(\omega)-1}^1(\omega)}\omega)$, by Theorem \ref{thm fiber specification}, there exists a point $y=y(x_0^1,\dots, x^1_{N_1(\omega)-1})\in M$, which $(\omega,{\eta}/{2^{4+1}})$-shadows pieces of orbits
\begin{displaymath}
	\begin{split}
		&\left\{x_0^1,F_{\theta^{T_0^1(\omega)}\omega}(x_0^1),\dots,F^{\hat{n}_1(\theta^{T_0^1(\omega)}\omega)-1}_{\theta^{T_0^1(\omega)}\omega}(x_0^1)\right\},\cdots,\\&\left\{x_{N_1(\omega)-1}^1,F_{\theta^{T_{N_1(\omega)-1}^1(\omega)}\omega}(x_{N_1(\omega)-1}^1),\dots,F^{\hat{n}_1(\theta^{T_{N_1(\omega)-1}^1(\omega)}\omega)-1}_{\theta^{T_{N_1(\omega)-1}^1(\omega)}\omega}(x_{N_1(\omega)-1}^1)\right\}
\end{split} \end{displaymath}
with gaps $m_1+l_1^1(\omega),m_1+l_2^1(\omega),\dots,m_1+l_{N_1-1}^1(\omega)$, i.e., for any $k\in\{0,\dots,N_1(\omega)-1\}$
\begin{equation}\label{eq shadowingproperty y}
	d_{\theta^{T_k^1(\omega)}\omega}^{\hat{n}_1(\theta^{T_k^1(\omega)}\omega)}\left(x_k^1, F_{\omega}^{T_k^1(\omega)}y\right)<\frac{\eta}{2^{4+1}}.
\end{equation}

Let $D_1(\omega)=\{y=y(x_0^1,\dots,x_{N_1(\omega)-1}^1):x_k^1\in C_1(\theta^{T_k^1(\omega)}\omega),k\in \{0,\dots,N_1(\omega)-1\}\}$, which is the first intermediate set. 
Similar as the proof of lemma 4.3 in \cite{variationalprincipleforrandomanosovsystems}, we can prove the following lemma.
\begin{lemma}\label{lemma separated metric D1}
	If $(x_0^1,\dots,x_{N_1(\omega)-1}^1), (z_0^1,\dots,z_{N_1(\omega)-1}^1)\in C_1(\theta^{T_0^1(\omega)}\omega)\times\cdots\times C_1(\theta^{T_{N_1(\omega)-1}^1(\omega)}\omega)$ are different $N_1(\omega)$-tuples, then
	\begin{displaymath}
		d_{\omega}^{T_{N_1(\omega)}^1(\omega)}\left(y(x_0^1,\dots,x_{N_1(\omega)-1}^1),y(z_0^1,\dots,z_{N_1(\omega)-1}^1)\right)> \frac{\eta}{2}-\frac{\eta}{2^{4+1}}\times 2\geq \frac{3\eta}{8}.
	\end{displaymath}
	Hence $\# D_1(\omega)=\prod_{i=0}^{N_1(\omega)-1}\# C_1(\theta^{T_i^1(\omega)}\omega).$
\end{lemma}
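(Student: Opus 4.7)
The plan is to reduce the separation statement to an application of the triangle inequality in the appropriate fiber Bowen metric, exploiting the two facts we already have: the sets $C_1(\theta^{T_k^1(\omega)}\omega)$ are $(\theta^{T_k^1(\omega)}\omega,\eta/2,\hat{n}_1(\theta^{T_k^1(\omega)}\omega))$-separated, and each shadowing point from Theorem \ref{thm fiber specification} stays within $\eta/2^{4+1}$ of the prescribed orbit pieces, as recorded in \eqref{eq shadowingproperty y}.

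First, since the two tuples differ, there exists some index $k\in\{0,\dots,N_1(\omega)-1\}$ with $x_k^1\neq z_k^1$. By the separation property of $C_1(\theta^{T_k^1(\omega)}\omega)$ we get
\[
d^{\hat{n}_1(\theta^{T_k^1(\omega)}\omega)}_{\theta^{T_k^1(\omega)}\omega}\bigl(x_k^1,z_k^1\bigr)>\frac{\eta}{2}.
\]
Next, I apply \eqref{eq shadowingproperty y} to both shadowing points $y_x:=y(x_0^1,\dots,x_{N_1(\omega)-1}^1)$ and $y_z:=y(z_0^1,\dots,z_{N_1(\omega)-1}^1)$, which gives
\[
d^{\hat{n}_1(\theta^{T_k^1(\omega)}\omega)}_{\theta^{T_k^1(\omega)}\omega}\bigl(x_k^1,F_\omega^{T_k^1(\omega)}y_x\bigr)<\frac{\eta}{2^{4+1}},\qquad d^{\hat{n}_1(\theta^{T_k^1(\omega)}\omega)}_{\theta^{T_k^1(\omega)}\omega}\bigl(z_k^1,F_\omega^{T_k^1(\omega)}y_z\bigr)<\frac{\eta}{2^{4+1}}.
\]
Then a single triangle-inequality application in the metric $d^{\hat{n}_1(\theta^{T_k^1(\omega)}\omega)}_{\theta^{T_k^1(\omega)}\omega}$ yields
\[
d^{\hat{n}_1(\theta^{T_k^1(\omega)}\omega)}_{\theta^{T_k^1(\omega)}\omega}\bigl(F_\omega^{T_k^1(\omega)}y_x,F_\omega^{T_k^1(\omega)}y_z\bigr)>\frac{\eta}{2}-2\cdot\frac{\eta}{2^{4+1}}\geq\frac{3\eta}{8}.
\]

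The last step is to promote this separation from the shifted window $\{T_k^1(\omega),\dots,T_k^1(\omega)+\hat{n}_1(\theta^{T_k^1(\omega)}\omega)-1\}$ to the full window $\{0,1,\dots,T_{N_1(\omega)}^1(\omega)-1\}$. Since $T_k^1(\omega)+\hat{n}_1(\theta^{T_k^1(\omega)}\omega)\leq T_{N_1(\omega)}^1(\omega)$ by construction of the breakpoints $T_j^1(\omega)$, the indices appearing in the left-hand side above are a subset of those appearing in $d^{T_{N_1(\omega)}^1(\omega)}_\omega(y_x,y_z)$, which is a maximum over a larger index set; hence
\[
d^{T_{N_1(\omega)}^1(\omega)}_\omega(y_x,y_z)\geq d^{\hat{n}_1(\theta^{T_k^1(\omega)}\omega)}_{\theta^{T_k^1(\omega)}\omega}\bigl(F_\omega^{T_k^1(\omega)}y_x,F_\omega^{T_k^1(\omega)}y_z\bigr)>\frac{3\eta}{8}.
\]
This gives the separation claim, and in particular shows that the assignment $(x_0^1,\dots,x_{N_1(\omega)-1}^1)\mapsto y(x_0^1,\dots,x_{N_1(\omega)-1}^1)$ is injective, forcing $\#D_1(\omega)=\prod_{i=0}^{N_1(\omega)-1}\#C_1(\theta^{T_i^1(\omega)}\omega)$.

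I expect no real obstacle here; the statement is essentially a bookkeeping exercise combining the defining separation of $C_1$, the $\eta/2^{4+1}$-shadowing from the fiber specification property, and the monotonicity of the fiber Bowen metric in its time parameter. The only subtle point worth spelling out is the containment-of-index-sets argument that upgrades the shorter-window separation to the longer-window separation; this is exactly the place where one uses that the shadowing gap $T_k^1(\omega)+\hat{n}_1(\theta^{T_k^1(\omega)}\omega)\leq T^1_{N_1(\omega)}(\omega)$, which is immediate from how the $T_j^1(\omega)$ were constructed inductively in Step 1.
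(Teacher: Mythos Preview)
Your argument is correct and is exactly the standard one: pick an index where the tuples differ, use the $\eta/2$-separation of $C_1$, subtract twice the $\eta/2^{5}$ shadowing error via the triangle inequality, and then enlarge the time window using that $\{T_k^1(\omega),\dots,T_k^1(\omega)+\hat n_1(\theta^{T_k^1(\omega)}\omega)-1\}\subset\{0,\dots,T_{N_1(\omega)}^1(\omega)-1\}$. The paper does not write out a proof here but defers to \cite[Lemma~4.3]{variationalprincipleforrandomanosovsystems}, which carries out precisely this triangle-inequality computation; your write-up matches that approach.
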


Next, we define $D_k(\theta^{T_0^k(\omega)}\omega)$ for $k\geq 2$ inductively. Suppose now that $T_{N_k(\omega)}^k(\omega)> N_k(\omega)$ is already defined for $k\geq 1,$ let $l^{k,k+1}(\omega)$ be the first integer $i\geq 0$ such that
\begin{equation}\label{gap larger than mk+1}
	\theta^{T_{N_k(\omega)}^k(\omega)+m_{k+1}+i}\omega\in\Omega_{k+1}.
\end{equation}
Denote $T_0^{k+1}(\omega)=T_{N_k(\omega)}^k(\omega)+m_{k+1}+l^{k,k+1}(\omega)$. Let $l_1^{k+1}(\omega)$ be the first integer $i\geq 0$ such that
\begin{displaymath}
	\theta^{T_0^{k+1}(\omega)+\hat{n}_{k+1}(\theta^{T_0^{k+1}(\omega)}\omega)+m_{k+1}+i}\omega\in \Omega_{k+1}
\end{displaymath}
and denote
\begin{displaymath}
	T_1^{k+1}(\omega)=T_0^{k+1}(\omega)+\hat{n}_{k+1}(\theta^{T_0^k(\omega)}\omega)+m_{k+1}+l_{1}^{k+1}(\omega).
\end{displaymath}
Once $T_{j}^{k+1}(\omega)$ is defined for $j\in\{1,\dots,N_{k+1}(\omega)-2\}$, we let $l_{j+1}^{k+1}(\omega)$ be the first integer $i\geq 0$ such that
\begin{displaymath}
	\theta^{T_j^{k+1}(\omega)+\hat{n}_{k+1}(\theta^{T_j^{k+1}(\omega)}\omega)+m_{k+1}+i}\omega\in\Omega_{k+1},
\end{displaymath}
and denote
\begin{displaymath}
	T_{j+1}^{k+1}(\omega)=T_j^{k+1}(\omega)+\hat{n}_{k+1}(\theta^{T_j^{k+1}(\omega)}\omega)+m_{k+1}+l_{j+1}^{k+1}(\omega).
\end{displaymath}For $j=N_{k+1}(\omega)$, we define
\begin{equation*}
	T_{N_{k+1}(\omega)}^{k+1}(\omega)=T_{N_{k+1}(\omega)-1}^{k+1}(\omega)+\hat{n}_{k+1}(\theta^{T_{N_{k+1}(\omega)-1}^{k+1}(\omega)}\omega).
\end{equation*}It is clear that $T_{N_{k+1}(\omega)}^{k+1}(\omega)>N_{k+1}(\omega)$ since $\hat{n}_{k+1}\geq 1$ and $m_{k+1}>0$. By construction,
in the orbit $\{\theta^{T_{N_k(\omega)}^k(\omega)}\omega,...,\theta^{T_{N_{k+1}(\omega)-1}^{k+1}(\omega)-1}\omega\}$, there are at least $l^{k,k+1}(\omega)+\sum_{i=1}^{N_{k+1}(\omega)-1}l_{i}^{k+1}(\omega)$ times not lying in $\Omega_{k+1}$ and $T_{N_k(\omega)}^k(\omega)\geq N_k(\omega)\geq L_{k+1}(\omega).$ Therefore \eqref{pick L k+1} implies
\begin{equation}\label{estimate of k bad point} \Big(l^{k,k+1}(\omega)+\sum_{i=1}^jl_{i}^{k+1}(\omega)\Big)\leq\xi_{k+1}T_{j}^{k+1}(\omega),\quad\forall j\in\{1,\dots,N_{k+1}(\omega)-1\}.
\end{equation}

By our construction, we have $\theta^{T^{k+1}_j(\omega)}\omega\in\Omega_{k+1}\subset\cap_{l\geq 1}\Omega_{\delta_l}$ for $j\in\{0,\dots,N_{k+1}(\omega)-1\}$. Therefore, combining (\ref{lower bound of nk hat}), (\ref{estimation for odd even k}) and (\ref{nk hat has maximum}), one concludes that
\begin{displaymath}
	\begin{cases}
		2^{m_{k+1}}\leq \hat{n}_{k+1}(\theta^{T_j^{k+1}(\omega)}\omega)\leq\hat{n}_{k+1}^M;\\
		P(\alpha_{\rho(k+1)},4\delta_{k+1},\hat{n}_{k+1}(\theta^{T_j^{k+1}(\omega)}\omega),\theta^{T_j^{k+1}(\omega)}\omega)\neq\varnothing;\\ 	\dfrac{1}{\hat{n}_{k+1}(\theta^{T_j^{k+1}(\omega)}\omega)}\log M(\alpha_{\rho(k+1)},4\delta_{k+1},\hat{n}_{k+1}(\theta^{T_j^{k+1}(\omega)}\omega),{\eta}/{2},\theta^{T_j^{k+1}(\omega)}\omega)\geq h_{\mu_0}(F)-5\gamma.
	\end{cases}
\end{displaymath}
For $j\in\{0,1,...,N_{k+1}(\omega)-1\}$, let $C_{k+1}(\theta^{T^{k+1}_j(\omega)}\omega)\subset P(\alpha_{\rho(k+1)},4\delta_{k+1},\hat{n}_{k+1}(\theta^{T_j^{k+1}(\omega)}\omega),\theta^{T_j^{k+1}(\omega)}\omega)$ be some $(\theta^{T_j^{k+1}(\omega)}\omega,{\eta}/{2},\hat{n}_{k+1}(\theta^{T_j^{k+1}(\omega)}\omega))$-separated set with largest cardinality, i.e.
\begin{displaymath}
	\# C_{k+1}(\theta^{T_j^{k+1}(\omega)}\omega)=M\left(\alpha_{\rho(k+1)},4\delta_{k+1},\hat{n}_{k+1}(\theta^{T_j^{k+1}(\omega)}\omega),\frac{\eta}{2},\theta^{T_j^{k+1}(\omega)}\omega\right).
\end{displaymath}

We begin to illustrate how to glue pieces of orbits in the remaining step. For any $N_{k+1}(\omega)$-tuple $$(x_0^{k+1},\dots,x_{N_{k+1}(\omega)-1}^{k+1})\in C_{k+1}(\theta^{T_0^{k+1}(\omega)}\omega)\times\cdots\times C_{k+1}(\theta^{T_{N_{k+1}(\omega)-1}^{k+1}(\omega)}\omega),$$there exists a point $y=y(x_0^{k+1},\dots,x_{N_{k+1}(\omega)-1}^{k+1})\in M$, which $(\theta^{T_0^{k+1}(\omega)}\omega,{\eta}/{2^{4+k+1}})$-shadows pieces of orbits
\begin{displaymath}
	\begin{split}
		&\left\{x_0^{k+1}, F_{\theta^{T_0^{k+1}(\omega)}\omega}(x_0^{k+1}),\dots,F^{\hat{n}_{k+1}(\theta^{T_0^{k+1}(\omega)}\omega)-1}_{\theta^{T_0^{k+1}(\omega)}\omega}(x_0^{k+1})\right\},\cdots,\\
		&\left\{x_{N_{k+1}(\omega)-1}^{k+1}, F_{\theta^{T_{N_{k+1}(\omega)-1}^{k+1}(\omega)}\omega}(x_{N_{k+1}(\omega)-1}^{k+1}),\dots,F^{\hat{n}_{k+1}(\theta^{T_{N_{k+1}(\omega)-1}^{k+1}(\omega)}\omega)-1}_{\theta^{T_{N_{k+1}(\omega)-1}^{k+1}(\omega)}\omega}(x_{N_{k+1}(\omega)-1}^{k+1})\right\}
	\end{split}
\end{displaymath}
with gaps $m_{k+1}+l_1^{k+1}(\omega),\dots,m_{k+1}+l_{N_{k+1}(\omega)-1}^{k+1}(\omega)$, respectively, i.e. for any $j\in\{0,\dots,N_{k+1}(\omega)-1\},$ we have
\begin{equation} \label{eq y shadowing xk+1} d_{\theta^{T_j^{k+1}(\omega)}\omega}^{\hat{n}_{k+1}(\theta^{T_j^{k+1}(\omega)}\omega)}\Big(x_j^{k+1}, F_{\theta^{T_0^{k+1}(\omega)}\omega}^{T_{j}^{k+1}(\omega)-T_0^{k+1}(\omega)}y\Big)<\frac{\eta}{2^{4+k+1}}.
\end{equation}Such point exists according to theorem \ref{thm fiber specification}.
We collect all such points into
\begin{equation*}
	D_{k+1}(\theta^{T_0^{k+1}(\omega)}\omega)=\{y(x_0^{k+1},\dots,x_{N_{k+1}(\omega)-1}^{k+1}):\ x_{i}^{k+1}\in C_{k+1}(\theta^{T_{i}^{k+1}(\omega)}\omega)\mbox{ for }i=1,...,N_{k+1}(\omega)-1\}.
\end{equation*}
Notice that different $N_{k+1}(\omega)$-tuples give rise to different shadowing points, which are separated in the metric $	d_{\theta^{T_0^{k+1}(\omega)}\omega}^{T_{N_{k+1}}^{k+1}(\omega)-T_0^{k+1}(\omega)}$. Similar as lemma \ref{lemma separated metric D1}, we have the following lemma.
\begin{lemma}\label{lemma separated metric Dk+1}
	If $(x_0^{k+1},\dots,x_{N_{k+1}(\omega)-1}^{k+1}), (z_0^{k+1},\dots,z_{N_{k+1}(\omega)-1}^{k+1})\in\prod_{i=0}^{N_{k+1}(\omega)-1} C_{k+1}(\theta^{T_i^{k+1}(\omega)}\omega)$ are different $N_{k+1}(\omega)$-tuples, then
	\begin{displaymath}
		d_{\theta^{T_0^{k+1}(\omega)}\omega}^{T_{N_{k+1}}^{k+1}(\omega)-T_0^{k+1}(\omega)}\left(y(x_0^{k+1},\dots,x_{N_{k+1}(\omega)-1}^{k+1}),y(z_0^{k+1},\dots,z_{N_{k+1}(\omega)-1}^{k+1})\right)> \frac{\eta}{2}-\frac{\eta}{2^{4+k+1}}\times 2.
	\end{displaymath}
	Hence
	\begin{equation}\label{number Dk+1}
		\# D_{k+1}(\theta^{T_0^{k+1}(\omega)}\omega)=\prod_{i=0}^{N_{k+1}(\omega)-1}\# C_{k+1}(\theta^{T_i^{k+1}(\omega)}\omega).
	\end{equation}
\end{lemma}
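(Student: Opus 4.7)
The plan is to follow the template of \fullref{lemma separated metric D1} adapted to the $(k+1)$-st level of the construction. Since the two $N_{k+1}(\omega)$-tuples are distinct, I fix an index $j_0\in\{0,\dots,N_{k+1}(\omega)-1\}$ at which $x_{j_0}^{k+1}\neq z_{j_0}^{k+1}$. Both of these points live in the $(\theta^{T_{j_0}^{k+1}(\omega)}\omega,\eta/2,\hat{n}_{k+1}(\theta^{T_{j_0}^{k+1}(\omega)}\omega))$-separated set $C_{k+1}(\theta^{T_{j_0}^{k+1}(\omega)}\omega)$, so by the very definition of that separated set,
\begin{equation*}
d_{\theta^{T_{j_0}^{k+1}(\omega)}\omega}^{\hat{n}_{k+1}(\theta^{T_{j_0}^{k+1}(\omega)}\omega)}\bigl(x_{j_0}^{k+1},z_{j_0}^{k+1}\bigr)>\frac{\eta}{2}.
\end{equation*}

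Next, I invoke the shadowing estimate \eqref{eq y shadowing xk+1} for both $y:=y(x_0^{k+1},\dots,x_{N_{k+1}(\omega)-1}^{k+1})$ and $y':=y(z_0^{k+1},\dots,z_{N_{k+1}(\omega)-1}^{k+1})$ at the $j_0$-th piece, which forces $x_{j_0}^{k+1}$ and $F_{\theta^{T_0^{k+1}(\omega)}\omega}^{T_{j_0}^{k+1}(\omega)-T_0^{k+1}(\omega)}y$ to be within $\eta/2^{4+k+1}$ in the metric $d_{\theta^{T_{j_0}^{k+1}(\omega)}\omega}^{\hat{n}_{k+1}(\theta^{T_{j_0}^{k+1}(\omega)}\omega)}$, and analogously for $z_{j_0}^{k+1}$ and $F_{\theta^{T_0^{k+1}(\omega)}\omega}^{T_{j_0}^{k+1}(\omega)-T_0^{k+1}(\omega)}y'$. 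A two-step triangle inequality in this common Bowen metric then yields
\begin{equation*}
d_{\theta^{T_{j_0}^{k+1}(\omega)}\omega}^{\hat{n}_{k+1}(\theta^{T_{j_0}^{k+1}(\omega)}\omega)}\Bigl(F_{\theta^{T_0^{k+1}(\omega)}\omega}^{T_{j_0}^{k+1}(\omega)-T_0^{k+1}(\omega)}y,\;F_{\theta^{T_0^{k+1}(\omega)}\omega}^{T_{j_0}^{k+1}(\omega)-T_0^{k+1}(\omega)}y'\Bigr)>\frac{\eta}{2}-\frac{\eta}{2^{4+k+1}}\times 2.
\end{equation*}
Finally, this sub-window maximum is taken over iterates $F_{\theta^{T_0^{k+1}(\omega)}\omega}^{i}$ for indices $i$ in the range $[T_{j_0}^{k+1}(\omega)-T_0^{k+1}(\omega),\,T_{j_0}^{k+1}(\omega)-T_0^{k+1}(\omega)+\hat{n}_{k+1}(\theta^{T_{j_0}^{k+1}(\omega)}\omega)-1]$, which by the recursive definition of the $T_j^{k+1}(\omega)$ (each step adds a positive quantity $\hat{n}_{k+1}(\cdot)+m_{k+1}+l_j^{k+1}(\omega)$) is contained in $[0,\,T_{N_{k+1}(\omega)}^{k+1}(\omega)-T_0^{k+1}(\omega)-1]$. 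Hence the full Bowen distance $d_{\theta^{T_0^{k+1}(\omega)}\omega}^{T_{N_{k+1}(\omega)}^{k+1}(\omega)-T_0^{k+1}(\omega)}(y,y')$ dominates the sub-window maximum and also exceeds $\eta/2-2\cdot\eta/2^{4+k+1}$. The counting identity \eqref{number Dk+1} is then immediate: the map sending a tuple to its shadowing point is injective since distinct tuples produce points separated by a strictly positive distance.

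The only real obstacle is bookkeeping, namely confirming that the $j_0$-th shadowing window genuinely sits inside the global Bowen window based at $\theta^{T_0^{k+1}(\omega)}\omega$ of length $T_{N_{k+1}(\omega)}^{k+1}(\omega)-T_0^{k+1}(\omega)$. This is a direct consequence of the inductive definition, since the gap lengths $m_{k+1}+l_j^{k+1}(\omega)$ and the fiber-wise return times $\hat{n}_{k+1}(\theta^{T_j^{k+1}(\omega)}\omega)$ are all non-negative and the window of the last piece ends precisely at $T_{N_{k+1}(\omega)}^{k+1}(\omega)$. No new analytic ingredient beyond the fiber specification property and the separatedness of $C_{k+1}(\theta^{T_{j_0}^{k+1}(\omega)}\omega)$ is required.
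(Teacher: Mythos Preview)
Your proof is correct and follows exactly the approach the paper intends: the paper does not write out a proof but merely states ``Similar as lemma \ref{lemma separated metric D1}'' (which in turn cites lemma~4.3 of \cite{variationalprincipleforrandomanosovsystems}), and your argument is precisely the spelled-out version of that standard separation--plus--shadowing--plus--triangle-inequality computation.
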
In this way, we have constructed intermediate sets $D_1(\omega)$ and $D_k(\theta^{T_0^k(\omega)}\omega)$ for $k\geq 2$.

\emph{Step 2. Construction of $H_k(\omega),$ centers of balls forming the $k$-th level of fiber Moran-like fractal.}
We would like to construct the centers of balls forming the $k$-th level of fiber Moran-like fractal inductively. Let $H_1(\omega)=D_1(\omega).$ Once $H_k(\omega)$ for $k\geq 1$ is constructed, we can construct $H_{k+1}(\omega)$ inductively as follows. For any $x\in H_k(\omega)$ and $y\in D_{k+1}(\theta^{T_0^{k+1}(\omega)}\omega)$, notice that $$T_{0}^{k+1}(\omega)-T_{N_k(\omega)}^k(\omega)=m_{k+1}+l^{k,k+1}(\omega)\geq m_{k+1}$$ by the construction \eqref{gap larger than mk+1}. Therefore, by theorem \ref{thm fiber specification}, there exists a point $z=z(x,y)\in M$, which $(\omega,{\eta}/{2^{4+k+1}})$-shadows pieces of orbits
\begin{displaymath}
	\left\{x,F_{\omega}(x),\dots,F_{\omega}^{T_{N_k(\omega)}^k(\omega)-1}(x)\Big\},\Big\{y,F_{\theta^{T_0^{k+1}(\omega)}\omega}(y)\dots,F_{\theta^{T_0^{k+1}(\omega)}\omega}^{T_{N_{k+1}(\omega)}^{k+1}(\omega)-T_0^{k+1}(\omega)-1}(y)\right\}
\end{displaymath}
with the space $m_{k+1}+l^{k,k+1}(\omega),$ i.e.,
\begin{equation}\label{z shadow xy}
	d_{\omega}^{T_{N_k(\omega)}^k(\omega)}(x,z)<\frac{\eta}{2^{4+k+1}},\quad d_{\theta^{T_0^{k+1}(\omega)}\omega}^{T_{N_{k+1}(\omega)}^{k+1}(\omega)-T_0^{k+1}(\omega)}\left(y,F_{\omega}^{T_0^{k+1}(\omega)}z\right)<\frac{\eta}{2^{4+k+1}}.
\end{equation}
Collect all these points into the set $H_{k+1}(\omega)=\{z=z(x,y):x\in H_k(\omega),y\in D_{k+1}(\theta^{T_0^{k+1}(\omega)}\omega)\}.$ Similar as the proof of lemma 4.5 in \cite{variationalprincipleforrandomanosovsystems}, we can prove the following lemma.

\begin{lemma}\label{lemma Hk separated}
	For any $k\in\mathbb{N}$, $x\in H_k(\omega),y,y^{\prime}\in D_{k+1}(\theta^{T_0^{k+1}(\omega)}\omega),$ if $y\neq y^{\prime},$ one has
	\begin{align}
		&d_{\omega}^{T_{N_k(\omega)}^k(\omega)}(z(x,y),z(x,y^{\prime}))<\frac{\eta}{2^{4+k+1}}\times 2=\frac{\eta}{2^{4+k}},\label{distance of y small}\\
		&d_{\omega}^{T_{N_{k+1}(\omega)}^{k+1}(\omega)}(z(x,y),z(x,y^{\prime}))> \frac{\eta}{2}-\frac{\eta}{2^{4+k+1}}\times 4\geq \frac{3\eta}{8}.\label{distance of y large}
	\end{align}
\end{lemma}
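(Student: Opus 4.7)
The plan is to derive both inequalities as direct consequences of the triangle inequality combined with the shadowing estimates built into the construction of $z(x,y)$, together with the separation of distinct elements of $D_{k+1}(\theta^{T_0^{k+1}(\omega)}\omega)$ that has already been established in \fullref{lemma separated metric Dk+1}. No new shadowing lemma is needed; all the required estimates are already encoded in \eqref{z shadow xy}.

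For inequality \eqref{distance of y small}, I would simply observe that by the construction via \fullref{thm fiber specification}, both $z(x,y)$ and $z(x,y')$ were chosen to $(\omega,\eta/2^{4+k+1})$-shadow the \emph{same} orbit segment $\{x,F_\omega(x),\dots,F_\omega^{T_{N_k(\omega)}^k(\omega)-1}(x)\}$ in the metric $d_\omega^{T_{N_k(\omega)}^k(\omega)}$. Applying the triangle inequality in this metric gives
\[
d_\omega^{T_{N_k(\omega)}^k(\omega)}(z(x,y),z(x,y'))\leq\frac{\eta}{2^{4+k+1}}+\frac{\eta}{2^{4+k+1}}=\frac{\eta}{2^{4+k}},
\]
which is exactly \eqref{distance of y small}.

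For inequality \eqref{distance of y large}, the key point is to switch time windows: the separation of $y$ and $y'$ from \fullref{lemma separated metric Dk+1} lives in the metric $d_{\theta^{T_0^{k+1}(\omega)}\omega}^{T_{N_{k+1}}^{k+1}(\omega)-T_0^{k+1}(\omega)}$ starting at time $T_0^{k+1}(\omega)$, while \eqref{z shadow xy} says that $F_\omega^{T_0^{k+1}(\omega)}z(x,y)$ and $F_\omega^{T_0^{k+1}(\omega)}z(x,y')$ track $y$ and $y'$ respectively within $\eta/2^{4+k+1}$ in that very same metric. Two applications of the triangle inequality give
\[
d_{\theta^{T_0^{k+1}(\omega)}\omega}^{T_{N_{k+1}}^{k+1}(\omega)-T_0^{k+1}(\omega)}\bigl(F_\omega^{T_0^{k+1}(\omega)}z(x,y),F_\omega^{T_0^{k+1}(\omega)}z(x,y')\bigr)>\frac{\eta}{2}-\frac{\eta}{2^{4+k+1}}\times 2-\frac{\eta}{2^{4+k+1}}\times 2=\frac{\eta}{2}-\frac{\eta}{2^{4+k+1}}\times 4.
\]
Since $d_\omega^{T_{N_{k+1}(\omega)}^{k+1}(\omega)}$ is a maximum over a time window that contains the sub-window $[T_0^{k+1}(\omega),T_{N_{k+1}}^{k+1}(\omega))$, the left-hand side of the above display is bounded above by $d_\omega^{T_{N_{k+1}(\omega)}^{k+1}(\omega)}(z(x,y),z(x,y'))$, yielding \eqref{distance of y large}. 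The elementary check that $\eta/2-4\eta/2^{4+k+1}\geq 3\eta/8$ for every $k\geq 1$ completes the argument.

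There is no real obstacle here: the construction has been set up precisely so that the shadowing errors at each level decay geometrically in $k$, so the triangle inequalities close with room to spare, exactly mirroring the argument for the intermediate-set separation lemma. The only bookkeeping point to be careful about is making sure the time interval over which one measures separation in \eqref{distance of y large} is indeed a sub-interval of $[0,T_{N_{k+1}(\omega)}^{k+1}(\omega))$, which is immediate from $0\leq T_0^{k+1}(\omega)<T_{N_{k+1}(\omega)}^{k+1}(\omega)$.
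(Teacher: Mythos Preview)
Your proof is correct and follows exactly the approach the paper has in mind: the paper does not spell out a proof of this lemma but simply refers to the analogous argument in \cite{variationalprincipleforrandomanosovsystems}, which proceeds by the same combination of the shadowing estimate \eqref{z shadow xy} with the separation from \fullref{lemma separated metric Dk+1} and the triangle inequality. The only cosmetic point is that in your display for \eqref{distance of y small} the inequality should be strict (both shadowing bounds in \eqref{z shadow xy} are strict), matching the strict inequality in the statement.
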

As a consequence of lemma \ref{lemma Hk separated}, we have
\begin{equation}\label{number Hk}
	\# H_{k+1}(\omega)=\# H_{k}(\omega)\cdot \# D_{k+1}(\theta^{T_0^{k+1}(\omega)}\omega)=\#D_1(\omega)\cdot\prod_{i=2}^{k+1}D_{i}(\theta^{T_0^i(\omega)}\omega).
\end{equation}
Note that points in $H_1(\omega)=D_1(\omega)$ are $(\omega,{3\eta}/{8},T_{N_1(\omega)}^1(\omega))$-separated by lemma \ref{lemma separated metric D1}. Therefore, due to \eqref{distance of y large}, for any $k\in\mathbb{N}$, points in $H_k(\omega)$ are $(\omega,{3\eta}/{8},T_{N_{k}(\omega)}^{k}(\omega))$-separated, i.e.
\begin{equation}\label{Hk are 3eta/8 separated}
	d_{\omega}^{T_{N_{k}(\omega)}^{k}(\omega)}(z,z^\prime)> \frac{3\eta}{8},\ \forall z,z^\prime\in H_k(\omega).
\end{equation}

\emph{Step 3. Construction of fiber Moran-like fractal $\mathfrak{I}(\omega).$} For any $k\geq 1$, we define the $k$-th level of Moran-like fractal by
\begin{equation}\label{def k-th mf}
	\mathfrak{I}_k(\omega)=\bigcup_{x\in H_k(\omega)}\overline{\mathcal{B}}_{T_{N_k(\omega)}^k(\omega)}\left(\omega,x,\frac{\eta}{2^{4+k}}\right),
\end{equation}
where
\begin{displaymath}
	\overline{\mathcal{B}}_{T^k_{N_k(\omega)}(\omega)}\left(\omega,x,\frac{\eta}{2^{4+k}}\right)=\left\{z\in M:d_{\omega}^{T^k_{N_k(\omega)}(\omega)}\left(z,x\right)\leq\frac{\eta}{2^{4+k}}\right\}.
\end{displaymath}
The structure the $k$-th level of fiber Moran-like fractal is given in the next lemma, whose proof is placed in section \ref{sec proof of lemmas}.
\begin{lemma}\label{lemma decreased Ik}
	For every $k\geq 1$, the following statements hold
	\begin{enumerate}
		\item for any $x,x^{\prime}\in H_{k}(\omega)$, $x\neq x^{\prime},$ we have
		$$\overline{\mathcal{B}}_{T_{N_k(\omega)}^k(\omega)}\left(\omega,x,\frac{\eta}{2^{4+k}}\right)\cap \overline{\mathcal{B}}_{T_{N_k(\omega)}^k(\omega)}\left(\omega,x^{\prime},\frac{\eta}{2^{4+k}}\right)=\varnothing.$$
		\item  if $z=z(x,y)\in H_{k+1}(\omega)$ for $x\in H_k(\omega)$ and $y\in D_{k+1}(\theta^{T_0^{k+1}(\omega)}\omega),$ then
		\begin{displaymath}
			\overline{\mathcal{B}}_{T_{N_{k+1}(\omega)}^{k+1}(\omega)}\left(\omega,z,\frac{\eta}{2^{4+k+1}}\right)\subset \overline{\mathcal{B}}_{T_{N_k(\omega)}^k(\omega)}\left(\omega,x,\frac{\eta}{2^{4+k}}\right).
		\end{displaymath}
		As a consequence, $\mathfrak{I}_{k+1}(\omega)\subset\mathfrak{I}_k(\omega).$
	\end{enumerate}
\end{lemma}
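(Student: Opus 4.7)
The plan is to deduce both statements directly from the separation estimate \eqref{Hk are 3eta/8 separated} for $H_k(\omega)$, the shadowing estimate \eqref{z shadow xy} for points of $H_{k+1}(\omega)$, and the fact that the fiber Bowen metric $d_{\omega}^n$ is non-decreasing in $n$. No new specification argument is required: all the orbit-gluing work has already been absorbed into the definitions of the sets $H_k(\omega)$ and their mutual distance bounds.

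For part (1), I would fix distinct centers $x,x' \in H_k(\omega)$ and suppose, for contradiction, that some $w \in M$ belongs to both closed balls of radius $\eta/2^{4+k}$ in the metric $d_{\omega}^{T_{N_k(\omega)}^k(\omega)}$. By the triangle inequality I would get
\[
d_{\omega}^{T_{N_k(\omega)}^k(\omega)}(x,x') \leq 2 \cdot \frac{\eta}{2^{4+k}} = \frac{\eta}{2^{3+k}} \leq \frac{\eta}{16},
\]
which for every $k \geq 1$ is strictly smaller than $3\eta/8$, contradicting \eqref{Hk are 3eta/8 separated}. This forces the two closed balls to be disjoint.

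For part (2), I would fix $x \in H_k(\omega)$, $y \in D_{k+1}(\theta^{T_0^{k+1}(\omega)}\omega)$, and $z = z(x,y) \in H_{k+1}(\omega)$, together with an arbitrary $w \in \overline{\mathcal{B}}_{T_{N_{k+1}(\omega)}^{k+1}(\omega)}(\omega,z,\eta/2^{4+k+1})$. Because $T_{N_{k+1}(\omega)}^{k+1}(\omega) \geq T_{N_k(\omega)}^k(\omega)$ by construction and $d_{\omega}^n$ is monotone in $n$, one first obtains $d_{\omega}^{T_{N_k(\omega)}^k(\omega)}(w,z) \leq \eta/2^{4+k+1}$. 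Combining this with the first half of the shadowing inequality \eqref{z shadow xy}, namely $d_{\omega}^{T_{N_k(\omega)}^k(\omega)}(x,z) < \eta/2^{4+k+1}$, the triangle inequality yields
\[
d_{\omega}^{T_{N_k(\omega)}^k(\omega)}(w,x) \leq \frac{\eta}{2^{4+k+1}} + \frac{\eta}{2^{4+k+1}} = \frac{\eta}{2^{4+k}},
\]
so $w \in \overline{\mathcal{B}}_{T_{N_k(\omega)}^k(\omega)}(\omega,x,\eta/2^{4+k})$, proving the claimed inclusion. Taking the union over all $z \in H_{k+1}(\omega)$ and noting that every such $z$ arises as $z(x,y)$ for some $x \in H_k(\omega)$, the nestedness $\mathfrak{I}_{k+1}(\omega) \subset \mathfrak{I}_k(\omega)$ follows immediately from \eqref{def k-th mf}.

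I do not expect a serious obstacle here: the argument is essentially two triangle inequalities, and the only thing to verify carefully is the numerical comparison $2 \cdot 2^{-(4+k)} < 3/8$ in part (1) and the monotonicity of $d_{\omega}^n$ in $n$ used in part (2). The conceptual content of the lemma — that the shadowing radii were chosen as a geometric sequence so that the $(k+1)$-st level Bowen balls nest inside the $k$-th level Bowen balls while still remaining pairwise disjoint — is exactly what dictated the choice of the scale $\eta/2^{4+k}$ in the construction.
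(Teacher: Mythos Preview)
Your proposal is correct and follows essentially the same approach as the paper: part (1) uses the $3\eta/8$-separation \eqref{Hk are 3eta/8 separated} together with the triangle inequality, and part (2) combines the monotonicity of $d_\omega^n$ in $n$ with the shadowing estimate \eqref{z shadow xy} via the triangle inequality, exactly as in the paper's proof. The only cosmetic difference is that the paper states part (1) directly from the separation bound rather than phrasing it as a contradiction via a hypothetical common point $w$.
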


Finally, we define the fiber Moran-like fractal to be
\begin{equation}\label{construction of hua H}
	\mathfrak{I}(\omega)=\bigcap_{k\geq 1}\mathfrak{I}_k(\omega).
\end{equation}
By the compactness of $M, \mathfrak{I}(\omega)$ is a \emph{nonempty} closed subset of $M.$ The following lemma shows that the fiber Moran-like fractal is contained in the fiber of the irregular set, and its proof is addressed in section \ref{sec proof of lemmas}.

\begin{lemma}\label{lemma different limits}
	For any $x\in\mathfrak{I}(\omega)$, the following limits exist
	\begin{align}
		&\lim_{k\to\infty}\frac{1}{T_{N_{2k}(\omega)}^{2k}(\omega)}\sum_{i=0}^{T_{N_{2k}(\omega)}^{2k}(\omega)-1}\varphi(\Theta^i(\omega,x))=\alpha_0,\label{limit of even numbers}\\&\lim_{k\to\infty}\frac{1}{T_{N_{2k+1}(\omega)}^{2k+1}(\omega)}\sum_{i=0}^{T_{N_{2k+1}(\omega)}^{2k+1}(\omega)-1}\varphi(\Theta^i(\omega,x))=\alpha_1.\label{limit of odd numbers}
	\end{align}
	Therefore, we have $\mathfrak{I}(\omega)\subset I_{\varphi}(\omega)$.
\end{lemma}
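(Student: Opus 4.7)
The plan is to show that for any $x \in \mathfrak{I}(\omega)$, the Birkhoff averages of $\varphi$ along the subsequences $\{T_{N_{2k}(\omega)}^{2k}(\omega)\}_{k\geq 1}$ and $\{T_{N_{2k+1}(\omega)}^{2k+1}(\omega)\}_{k\geq 0}$ converge to $\alpha_0$ and $\alpha_1$, respectively. Since $\alpha_0 \neq \alpha_1$ by \fullref{lemma different integral values}, the Birkhoff average of $\varphi$ at $(\omega, x)$ necessarily fails to converge, so $x \in I_\varphi(\omega)$ and therefore $\mathfrak{I}(\omega) \subset I_\varphi(\omega)$.

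First I unravel the tree structure of $\mathfrak{I}(\omega)$: by the nested property in \fullref{lemma decreased Ik}, $x$ lies in a decreasing sequence of closed Bowen balls $\overline{\mathcal{B}}_{T_{N_k(\omega)}^k(\omega)}(\omega, x_k, \eta/2^{4+k})$ whose centers $x_k \in H_k(\omega)$ are built recursively as $x_{k+1} = z(x_k, y_{k+1})$ from points $y_{k+1} \in D_{k+1}(\theta^{T_0^{k+1}(\omega)}\omega)$, which in turn are obtained via \fullref{thm fiber specification} by shadowing $N_{k+1}(\omega)$-tuples $(x_0^{k+1}, \dots, x_{N_{k+1}(\omega)-1}^{k+1})$ lying in the fiber deviation sets $C_{k+1}(\theta^{T_j^{k+1}(\omega)}\omega) \subset P(\alpha_{\rho(k+1)}, 4\delta_{k+1}, \hat{n}_{k+1}(\theta^{T_j^{k+1}(\omega)}\omega), \theta^{T_j^{k+1}(\omega)}\omega)$.

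Fix $k \geq 1$ and write $T_k := T_{N_k(\omega)}^k(\omega)$. I split $[0, T_k)$ into three kinds of pieces: (i) the prefix $[0, T_{N_{k-1}(\omega)}^{k-1}(\omega))$, of length bounded by $\sum_{j=1}^{k-1} N_j(\omega)(\hat{n}_j^M + m_j)$ and hence of ratio $o(1)$ relative to $T_k$ thanks to \eqref{construction of Nk}; (ii) the $N_k(\omega)$ \emph{good} windows $[T_j^k(\omega), T_j^k(\omega) + \hat{n}_k(\theta^{T_j^k(\omega)}\omega))$; (iii) the \emph{gap} intervals between good windows together with the initial gap $[T_{N_{k-1}(\omega)}^{k-1}(\omega), T_0^k(\omega))$, of total length at most $\xi_k T_k + O(1)$ by \eqref{estimate 1 bad point} and \eqref{estimate of k bad point}. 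On each good window, combining $x \in \overline{\mathcal{B}}_{T_k}(\omega, x_k, \eta/2^{4+k})$ with the shadowing bound \eqref{eq y shadowing xk+1} gives $d_M(F_\omega^{T_j^k(\omega) + i} x, F_{\theta^{T_j^k(\omega)}\omega}^i x_j^k) \leq \eta/2^{3+k}$ for every $0 \leq i < \hat{n}_k(\theta^{T_j^k(\omega)}\omega)$, while $x_j^k$ belongs to a fiber deviation set with target $\alpha_{\rho(k)}$ and tolerance $4\delta_k$.

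Uniform continuity of $\varphi$ on the compact space $\Omega \times M$ then provides a modulus $\kappa_\varphi$ with $\kappa_\varphi(\epsilon) \to 0$, so that each good-window partial sum $\sum_{i=T_j^k(\omega)}^{T_j^k(\omega) + \hat{n}_k(\theta^{T_j^k(\omega)}\omega) - 1} \varphi(\Theta^i(\omega, x))$ differs from $\hat{n}_k(\theta^{T_j^k(\omega)}\omega) \cdot \alpha_{\rho(k)}$ by at most $\hat{n}_k(\theta^{T_j^k(\omega)}\omega) \cdot (4\delta_k + \kappa_\varphi(\eta/2^{3+k}))$. Contributions from prefix and gaps are controlled by $\sup|\varphi|$ times their total length. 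Dividing by $T_k$, the total error in the Birkhoff average is bounded by $4\delta_k + \kappa_\varphi(\eta/2^{3+k}) + (\xi_k + o(1)) \sup|\varphi|$, which tends to $0$ as $k \to \infty$ because $\delta_k, \xi_k \downarrow 0$ and $\eta/2^{3+k} \downarrow 0$. Since $\rho(2k) = 0$ and $\rho(2k+1) = 1$, this yields \eqref{limit of even numbers} and \eqref{limit of odd numbers}. The principal obstacle is purely bookkeeping: one must carefully verify that the total length of prefix plus gaps is $o(T_k)$ and that the shadowing precision transfers the deviation-set estimate from $x_j^k$ to $x$ within acceptable error, both of which are handled by the growth condition \eqref{construction of Nk}, the density estimate \eqref{pick L k+1}, and the uniform continuity of $\varphi$.
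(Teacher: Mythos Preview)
Your strategy is correct and in fact more direct than the paper's. The paper introduces the auxiliary quantities
\[
R_k(\omega)=\max_{z\in H_{2k-1}(\omega)}\Bigl|\textstyle\sum_{i=0}^{T_{N_{2k-1}}^{2k-1}-1}\varphi(\Theta^i(\omega,z))-T_{N_{2k-1}}^{2k-1}\alpha_1\Bigr|,\qquad W_k(\omega)=\max_{z\in H_{2k}(\omega)}\Bigl|\cdots-T_{N_{2k}}^{2k}\alpha_0\Bigr|,
\]
derives the recursions $R_{k+1}\le W_k+\cdots$ and $W_k\le R_k+\cdots$ by combining the shadowing estimates \eqref{z shadow xy} with the intermediate-set bounds on $D_j$, and then shows $R_k/T_{N_{2k-1}}^{2k-1}\to 0$ and $W_k/T_{N_{2k}}^{2k}\to 0$ before finally transferring to $x\in\mathfrak I(\omega)$. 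You instead discard the entire prefix $[0,T_{N_{k-1}}^{k-1})$ as raw error and work only at level $k$, which avoids the induction on $R_k,W_k$ altogether; this is legitimate precisely because of \eqref{quotient term 1}, and is the cleaner path.

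Two bookkeeping points to tighten. First, the passage from $x$ to $x_j^k$ on a good window actually requires three triangle-inequality steps when $k\ge 2$: $x\to x_k$ (membership in $\overline{\mathcal B}$), $F_\omega^{T_0^k}x_k\to y_k$ (second inequality in \eqref{z shadow xy} at level $k$), and $F^{T_j^k-T_0^k}y_k\to x_j^k$ (\eqref{eq y shadowing xk+1} at level $k$); this yields $3\eta/2^{4+k}$ rather than $\eta/2^{3+k}$. Second, the $m_k$-part of the gap length is not $O(1)$ but $N_k(\omega)m_k$; it is still $o(T_k)$ because $\hat n_k\ge 2^{m_k}$ forces $N_k m_k/T_k\le m_k/2^{m_k}\to 0$ (this is exactly how the paper handles it in \eqref{quotient term 4}). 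Neither affects the conclusion.
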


Notice that $\mathfrak{I}(\omega)$ is nonempty for any $\omega\in\widehat{\Omega}_{\gamma}$ defined in \eqref{Omega gamma hat}. Therefore, ${I}_{\varphi}(\omega)\neq\varnothing$ for any $\omega\in\widehat{\Omega}_{\gamma}$ by lemma \ref{lemma different limits}. Since $\widehat{\Omega}_{\gamma}$ has $\mathbb{P}$-full measure and $\mathbb{P}$ is complete on $(\Omega,\mathcal{B}_{\mathbb{P}}(\Omega)),$ we conclude that
\begin{displaymath}
	\mathbb{P}(\{\omega\in\Omega: I_{\varphi}(\omega)\neq\varnothing\})=1.
\end{displaymath}

\subsection{Construction of a suitable probability measure supported on fractals}
In this subsection, we will construct a suitable probability measure supported on the fiber Moran-like fractals.

For every $k\geq 1$, we define a probability measure $\mu_{k,\omega}\in \mathcal{M}(M)$ by
\begin{displaymath}
	\mu_{k,\omega}=\frac{1}{\# H_k(\omega)}\sum_{x\in H_k(\omega)}\delta_x.
\end{displaymath}
Since $\mu_{k,\omega}$ is concentrated on $H_k(\omega)$, we have $\mu_{k,\omega}(\mathfrak{I}_k(\omega))=1$ by construction \eqref{def k-th mf}. The proof of the following two lemmas are given in section \ref{sec proof of lemmas}.
\begin{lemma}\label{lemma limit measure}
	For any continuous function $\psi:M\to\mathbb{R}$, the following limit exists
	\begin{displaymath}
		\lim_{k\to\infty}\int_M\psi(x)\dif\mu_{k,\omega}(x).
	\end{displaymath}
\end{lemma}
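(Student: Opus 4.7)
The plan is to show that the sequence $\{\int_M \psi \,d\mu_{k,\omega}\}_k$ is Cauchy in $\mathbb{R}$ by exhibiting a natural tree structure on the sets $H_k(\omega)$ and exploiting the uniform continuity of $\psi$. The key structural input is the inductive construction: each $z \in H_{k+1}(\omega)$ is of the form $z(x,y)$ for a unique pair $(x,y) \in H_k(\omega) \times D_{k+1}(\theta^{T_0^{k+1}(\omega)}\omega)$. Injectivity of the map $(x,y) \mapsto z(x,y)$ is guaranteed by combining the shadowing bound $d_{\omega}^{T_{N_k(\omega)}^k(\omega)}(x,z(x,y)) < \eta/2^{4+k+1}$ from \eqref{z shadow xy} with the separation property \eqref{Hk are 3eta/8 separated}: two different parents $x \neq x' \in H_k(\omega)$ are at $d_\omega^{T_{N_k(\omega)}^k(\omega)}$-distance $> 3\eta/8$ apart, which forces their shadowing points to be distinct.

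This lets me define a \emph{parent map} $\pi_{k+1}\colon H_{k+1}(\omega) \to H_k(\omega)$ sending $z(x,y)$ to $x$. By \eqref{number Hk}, every fiber of $\pi_{k+1}$ has the same cardinality $\# D_{k+1}(\theta^{T_0^{k+1}(\omega)}\omega)$, so the pushforward of the uniform measure on $H_{k+1}(\omega)$ equals the uniform measure on $H_k(\omega)$, i.e.\ $(\pi_{k+1})_*\mu_{k+1,\omega} = \mu_{k,\omega}$. Consequently,
\begin{equation*}
\int_M \psi \,d\mu_{k+1,\omega} - \int_M \psi \,d\mu_{k,\omega}
= \frac{1}{\# H_{k+1}(\omega)}\sum_{z \in H_{k+1}(\omega)} \bigl(\psi(z) - \psi(\pi_{k+1}(z))\bigr).
\end{equation*}

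Now $d_M(z,\pi_{k+1}(z)) \leq d_\omega^{T_{N_k(\omega)}^k(\omega)}(z,\pi_{k+1}(z)) < \eta/2^{4+k+1}$ by the zeroth coordinate of the Bowen distance. Since $M$ is compact and $\psi$ is continuous, $\psi$ is uniformly continuous; denote its modulus of continuity by $\rho_\psi$. Then the displayed difference is bounded in absolute value by $\rho_\psi(\eta/2^{4+k+1})$, which tends to $0$ as $k \to \infty$. Hence $\{\int_M \psi \,d\mu_{k,\omega}\}_k$ is a Cauchy sequence in $\mathbb{R}$, which proves the existence of the limit.

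I do not expect a serious obstacle here; the only subtlety is confirming that the parent map is well defined (injectivity of $(x,y)\mapsto z(x,y)$), for which the separation estimate \eqref{Hk are 3eta/8 separated} already established is exactly what is needed. As a byproduct, this argument actually yields weak* convergence of $\mu_{k,\omega}$ on $M$ to some Borel probability measure $\mu_\omega$, which will presumably be used in the subsequent application of an entropy distribution principle on the fractal $\mathfrak{I}(\omega)$.
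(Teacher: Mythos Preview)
Your approach is essentially the same as the paper's---a tree/parent structure on $H_k(\omega)$ together with uniform continuity of $\psi$---but there is a genuine gap in the last step. You bound only the \emph{consecutive} differences
\[
\left|\int \psi\,d\mu_{k+1,\omega}-\int \psi\,d\mu_{k,\omega}\right|\le \rho_\psi\!\left(\eta/2^{4+k+1}\right)
\]
and then assert that the sequence is Cauchy. That inference is invalid: a sequence whose consecutive differences tend to zero need not be Cauchy (think of the partial sums of $\sum 1/n$). For a continuous $\psi$ with a slow modulus of continuity, e.g.\ $\rho_\psi(t)\sim 1/\log(1/t)$, the tail $\sum_{j\ge k}\rho_\psi(\eta/2^{4+j+1})$ diverges, so summing your bounds does not help.

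The fix is immediate and is exactly what the paper does: iterate your parent map to get $\pi\colon H_{k_2}(\omega)\to H_{k_1}(\omega)$ with $(\pi)_*\mu_{k_2,\omega}=\mu_{k_1,\omega}$, and telescope the \emph{distances} rather than the moduli. From \eqref{z shadow xy} one gets
\[
d_M\!\bigl(z,\pi(z)\bigr)\le \sum_{j=k_1+1}^{k_2}\frac{\eta}{2^{4+j}}<\frac{\eta}{2^{4+k_1}},
\]
so
\[
\left|\int \psi\,d\mu_{k_2,\omega}-\int \psi\,d\mu_{k_1,\omega}\right|\le \rho_\psi\!\left(\eta/2^{4+k_1}\right)\xrightarrow[k_1\to\infty]{}0,
\]
which is the Cauchy estimate you need. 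In short: apply the modulus of continuity once after summing the geometric distance increments, not term by term.
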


By the Riesz representation theorem, there exists some  $\mu_{\omega}\in \mathcal{M}(M)$ such that
\begin{displaymath}
	\int\psi \dif\mu_{\omega}=\lim_{k\to\infty}\int\psi \dif\mu_{k,\omega}, \forall \psi\in C(M,\mathbb{R}).
\end{displaymath}
We note that $\omega\in\widehat{\Omega}_{\gamma}$ is fixed at the beginning of this proof. Hence, the notation $\mu_\omega$ can be used to denote the measure concentrated on $\mathfrak{I}(\omega)$ without ambiguity.

\begin{lemma}\label{lemma I full measure}
	$\mu_{\omega}(\mathfrak{I}(\omega))=1.$ That is, $\mu_{\omega}$ is a probability measure supported on the fractal $\mathfrak{I}(\omega).$
\end{lemma}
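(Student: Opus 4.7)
The plan is to apply the Portmanteau theorem to each level set $\mathfrak{I}_k(\omega)$ and then use continuity of measure from above on the decreasing sequence. The two structural ingredients already in place are: (i) $\mathfrak{I}_k(\omega)$ is closed, as a finite union of closed Bowen balls $\overline{\mathcal{B}}_{T_{N_k(\omega)}^k(\omega)}(\omega,x,\eta/2^{4+k})$ over $x\in H_k(\omega)$, and (ii) by Lemma \ref{lemma decreased Ik}, the sequence $\{\mathfrak{I}_k(\omega)\}_{k\geq 1}$ is nested decreasing with $\mathfrak{I}(\omega)=\bigcap_{k\geq 1}\mathfrak{I}_k(\omega)$.

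First I would show that for every fixed $k\geq 1$, $\mu_\omega(\mathfrak{I}_k(\omega))=1$. The key observation is that by construction $\mu_{j,\omega}$ is supported on $H_j(\omega)\subset\mathfrak{I}_j(\omega)$ for each $j$, so for all $j\geq k$ one has $\mathfrak{I}_j(\omega)\subset\mathfrak{I}_k(\omega)$ and thus $\mu_{j,\omega}(\mathfrak{I}_k(\omega))=1$. Since $\mathfrak{I}_k(\omega)$ is closed and $\mu_{j,\omega}\to\mu_\omega$ in the weak* topology (by the construction just preceding this lemma and Lemma \ref{lemma limit measure}), the Portmanteau theorem yields
\begin{equation*}
\mu_\omega(\mathfrak{I}_k(\omega))\;\geq\;\limsup_{j\to\infty}\mu_{j,\omega}(\mathfrak{I}_k(\omega))\;=\;1,
\end{equation*}
so $\mu_\omega(\mathfrak{I}_k(\omega))=1$.

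Second, I would invoke continuity of the probability measure $\mu_\omega$ from above applied to the decreasing sequence $\{\mathfrak{I}_k(\omega)\}$ with intersection $\mathfrak{I}(\omega)$:
\begin{equation*}
\mu_\omega(\mathfrak{I}(\omega))=\lim_{k\to\infty}\mu_\omega(\mathfrak{I}_k(\omega))=1.
\end{equation*}
This uses only that $\mu_\omega$ is a probability measure on the compact metric space $M$, which follows from $\mu_{k,\omega}\in\mathcal{M}(M)$ being probability measures and the weak* limit preserving total mass on a compact space.

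I do not expect any real obstacle here; both the closedness of $\mathfrak{I}_k(\omega)$ and the nesting $\mathfrak{I}_{k+1}(\omega)\subset\mathfrak{I}_k(\omega)$ are exactly what Lemma \ref{lemma decreased Ik} provides, and the sole non-trivial step is invoking one direction of the Portmanteau theorem (upper semi-continuity on closed sets) to transfer the mass $1$ from the approximating sequence $\mu_{j,\omega}$ to $\mu_\omega$. The one small care needed is to start the Portmanteau step at index $j\geq k$ rather than $j=1$, since for $j<k$ we do not know a priori that $\mu_{j,\omega}(\mathfrak{I}_k(\omega))=1$.
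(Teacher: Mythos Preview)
Your proposal is correct and follows essentially the same argument as the paper: show $\mu_{k+p,\omega}(\mathfrak{I}_k(\omega))=1$ for all $p\geq 0$ by the nesting from Lemma~\ref{lemma decreased Ik}, apply the Portmanteau theorem on the closed set $\mathfrak{I}_k(\omega)$ to get $\mu_\omega(\mathfrak{I}_k(\omega))=1$, and then take the intersection over $k$. Your remark about starting the Portmanteau step at $j\geq k$ is exactly the paper's choice of indexing along $\mu_{k+p,\omega}$.
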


\subsection{Apply entropy distribution type argument}
In this subsection, with the help of the following lemma \ref{lemma entropy distribution}, we will the apply entropy distribution principle type argument to prove our main theorem. Readers who are not familiar with the entropy distribution principle argument are referred to \cite[Theorem 3.6]{TakensVerbitsky2003variationalprinciple}.

\begin{lemma}\label{lemma entropy distribution}
	There exists $N(\omega)\in\mathbb{N},$ such that for any $n\geq N(\omega),$
	\begin{equation*}
		\mbox{if $B_n\left(\omega,x,{\eta}/{2^{4}}\right)\cap\mathfrak{I}(\omega)\neq \varnothing$, then $ \mu_{\omega}\left(B_n\left(\omega,x,{\eta}/{2^4}\right)\right)\leq e^{-n(h_{\mu_0}(F)-6\gamma)}$}.
	\end{equation*}
Recall $\mu_0$  is defined in \eqref{def of hmu0}.
\end{lemma}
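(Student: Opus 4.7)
The plan is to apply a hierarchical counting argument for the Moran-like fractal, in the spirit of Thompson's entropy distribution principle but adapted to the fiberwise, variable length-function setup. Given $B_n(\omega, x, \eta/2^4)$, I would first identify the unique integers $k\ge 1$ and $j\in\{0,\ldots, N_{k+1}(\omega)-1\}$ with $T_j^{k+1}(\omega)\le n<T_{j+1}^{k+1}(\omega)$, and then prove that for every $\ell\ge k+1$,
\[
\#\bigl\{z\in H_\ell(\omega):z\in B_n(\omega,x,\eta/2^4)\bigr\}\;\le\;\prod_{i=j}^{N_{k+1}(\omega)-1}\#C_{k+1}(\theta^{T_i^{k+1}(\omega)}\omega)\cdot\prod_{q=k+2}^{\ell}\#D_q(\theta^{T_0^q(\omega)}\omega).
\]
Dividing by the factorisation (\ref{number Hk}) of $\#H_\ell(\omega)$, the $\#D_q$-factors telescope, and the Portmanteau theorem applied to the open Bowen ball together with $\mu_\omega=\lim_\ell\mu_{\ell,\omega}$ yields
\[
\mu_\omega\bigl(B_n(\omega, x, \eta/2^4)\bigr)\;\le\; \Bigl(\#H_k(\omega)\cdot\prod_{i=0}^{j-1}\#C_{k+1}(\theta^{T_i^{k+1}(\omega)}\omega)\Bigr)^{-1}.
\]

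The counting claim rests on two geometric observations. First, the $(\omega, T_{N_k(\omega)}^k(\omega), 3\eta/8)$-separation of $H_k(\omega)$ from (\ref{Hk are 3eta/8 separated}), combined with $T_{N_k(\omega)}^k(\omega)\le n$ and $\eta/2^4+\eta/2^{4+k}<3\eta/16$, forces $B_n(\omega, x, \eta/2^4)$ to meet at most one of the pairwise-disjoint level-$k$ balls $\overline{\mathcal{B}}_{T_{N_k(\omega)}^k(\omega)}(\omega, x_k, \eta/2^{4+k})$ of lemma \ref{lemma decreased Ik}. Second, for two points $z(x_k, y), z(x_k, y^\prime)\in H_{k+1}(\omega)$ sharing the same ancestor $x_k$ but with encodings $y=y(x_0^{k+1},\ldots)$ and $y^\prime=y(z_0^{k+1},\ldots)$ disagreeing in some slot $i<j$, the shadowing estimates (\ref{eq y shadowing xk+1}) and (\ref{z shadow xy}) combined with the $(\theta^{T_i^{k+1}(\omega)}\omega, \eta/2, \hat{n}_{k+1}(\cdots))$-separation of $C_{k+1}$ force $d_\omega^{T_{i+1}^{k+1}(\omega)}(z,z^\prime)>3\eta/8$, and since $T_{i+1}^{k+1}(\omega)\le n$ this same separation holds in $d_\omega^n$. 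Hence at most one prefix $(x_0^{k+1},\ldots, x_{j-1}^{k+1})$ is compatible with $B_n$, while $(x_j^{k+1},\ldots, x_{N_{k+1}(\omega)-1}^{k+1})$ remain free; iterating this dichotomy through every deeper level $\ell\ge k+2$ produces the claimed bound. Writing $S_n(\omega):=\sum_{i=1}^{k}\sum_{s=0}^{N_i(\omega)-1}\hat{n}_i(\theta^{T_s^i(\omega)}\omega)+\sum_{s=0}^{j-1}\hat{n}_{k+1}(\theta^{T_s^{k+1}(\omega)}\omega)$ and inserting the lower bound (\ref{estimation for odd even k}) for each factor $\#C_i$, the estimate becomes
\[
\mu_\omega\bigl(B_n(\omega, x, \eta/2^4)\bigr)\le \exp\bigl(-S_n(\omega)(h_{\mu_0}(F)-5\gamma)\bigr).
\]

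The main obstacle, and the place where the skew-product argument truly departs from the deterministic one, is to verify $S_n(\omega)\ge n\bigl(1-\tfrac{\gamma}{h_{\mu_0}(F)-5\gamma}\bigr)$ for all $n\ge N(\omega)$. Telescoping the recursion $T_{s+1}^i(\omega)=T_s^i(\omega)+\hat{n}_i(\theta^{T_s^i(\omega)}\omega)+m_i+l_{s+1}^i(\omega)$ gives
\[
n<T_{j+1}^{k+1}(\omega)=S_n(\omega)+\hat{n}_{k+1}(\theta^{T_j^{k+1}(\omega)}\omega)+\mathcal{G}(\omega;k,j),
\]
where $\mathcal{G}$ aggregates the inter-block waiting times $l_s^i(\omega), l^{i-1,i}(\omega)$ and the $m_i$-gaps. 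The waiting times are controlled by $\sum_{i=1}^{k+1}\xi_i\,T_{N_i(\omega)}^i(\omega)$ via (\ref{estimate 1 bad point})--(\ref{estimate of k bad point}), while the deterministic piece $\sum_{i=1}^{k+1}m_i N_i(\omega)+\hat{n}_{k+1}^M$ is $o(N_{k+1}(\omega))$ by the super-exponential growth of $N_{k+1}(\omega)$ enforced by (\ref{construction of Nk}). Since $\xi_k\to 0$ and the index $k$ attached to $n$ tends to infinity with $n$, choosing $N(\omega)$ large enough that $\xi_k<\gamma/(4 h_{\mu_0}(F))$ for the corresponding $k=k(n)$ makes the total error $\mathcal{G}(\omega;k,j)+\hat{n}_{k+1}^M$ bounded by $\tfrac{\gamma}{h_{\mu_0}(F)-5\gamma}\,n$, which closes the estimate.
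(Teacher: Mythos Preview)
Your proposal is correct and follows the same strategy as the paper: locate $n$ relative to the block structure $T_j^{k+1}(\omega)$, count descendants in $H_\ell(\omega)$ meeting the Bowen ball via the separation inequalities (\ref{Hk are 3eta/8 separated}), (\ref{eq y shadowing xk+1}), (\ref{z shadow xy}), pass to $\mu_\omega$ by Portmanteau, and finish by comparing $S_n(\omega)$ with $n$ using (\ref{estimate 1 bad point}), (\ref{estimate of k bad point}), (\ref{construction of Nk}).  The paper splits into three cases according to whether $n$ lies in the transition gap $[T_{N_k(\omega)}^k(\omega),T_0^{k+1}(\omega)]$, inside a $C_{k+1}$-block, or in a spacing interval, and it passes explicitly through the enlarged radius $\eta/2^3$ when pulling points of $H_{k+p}(\omega)$ back to their level-$(k{+}1)$ ancestors; your single indexing together with the phrase ``iterating this dichotomy'' collapses these distinctions and omits the transition gap $T_{N_k(\omega)}^k(\omega)<n<T_0^{k+1}(\omega)$, so when writing up you should either treat that gap as the degenerate case $j=0$ with empty product, and make explicit that a point $w\in H_\ell(\omega)\cap B_n(\omega,x,\eta/2^4)$ has level-$(k{+}1)$ ancestor within $d_\omega^n$-distance $\eta/2^{4+k+1}$, so that the $3\eta/8$-separation survives the descent.
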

Let $\Gamma_{\omega}^{\eta/2^4}=\{B_{n_i}(\omega,x_i,{\eta}/{2^4})\}_{i=1}^{\infty}$ be any finite or countable cover of $\mathfrak{I}(\omega)$ with $\min\{n_i\}\geq N(\omega)$. Without loss of generality, we assume that $B_{n_i}(\omega,x_i,{\eta}/{2^4})\cap\mathfrak{I}(\omega)\neq \varnothing$ for every $i$. By lemma \ref{lemma entropy distribution}, we have
	\begin{displaymath}
		\sum_ie^{-n_i(h_{\mu_0}(F)-6\gamma)}\geq \sum_{i}\mu_{\omega}\left[B_{n_i}\left(\omega,x_i,\frac{\eta}{2^{4}}\right)\right]\geq \mu_{\omega}(\mathfrak{I}(\omega))=1>0.
	\end{displaymath}
	Therefore, combining \eqref{def of m(Z,s,omega,N,epsilon)} and \eqref{def of m(Z,s,omega,epsilon)} together, the following holds
	\begin{displaymath}
		m\left(\mathfrak{I}(\omega), h_{\mu_0}(F)-6\gamma,\omega,\frac{\eta}{2^4}\right)\geq \inf_{\Gamma_{\omega}^{\eta/2^4}}\sum_ie^{-n_i(h_{\mu_0}(F)-6\gamma)}\geq 1>0.
	\end{displaymath}
	Hence, by definition \eqref{def of htop(F,Z,omega,epsilon)}, we have
	\begin{displaymath}
		h_{top}\left(F,\mathfrak{I}(\omega),\omega,\frac{\eta}{2^4}\right)\geq h_{\mu_0}(F)-6\gamma.
	\end{displaymath}
   Combining lemma \ref{lemma htop exists} and lemma \ref{lemma different limits}, we conclude that
	\begin{equation}\label{htop I plus 6gamma is greater}
		h_{\mu_0}(F)-6\gamma\leq \sup_{\epsilon>0}h_{top}(F,\mathfrak{I}(\omega),\omega,\epsilon)=h_{top}(F,\mathfrak{I}(\omega),\omega)\leq h_{top}(F,I_{\varphi}(\omega),\omega).
	\end{equation}
	
	We have showed that \eqref{htop I plus 6gamma is greater} holds for arbitrary $\omega\in\widehat{\Omega}_\gamma$. To show \eqref{htop is greater that hmu0}, we pick a countable sequence $\gamma_k\to 0,$ and let $\widehat{\Omega}=\cap_{k}\widehat{\Omega}_{\gamma_k}.$ It follows that $\mathbb{P}(\widehat{\Omega})=1,$ and for any $\omega\in\widehat{\Omega}\subset\widehat{\Omega}_k$, we have
	\begin{displaymath}
		h_{\mu_0}(F)-6\gamma_k\leq h_{top}(F,I_{\varphi}(\omega),\omega).
	\end{displaymath}
  Therefore, for any $\omega\in\widehat{\Omega}$, we arrive at
	\begin{equation}
		h_{top}(F,I_{\varphi}(\omega),\omega)\geq h_{\mu_0}(F).
	\end{equation}

The proof of theorem \ref{thm irregular set} is complete.
\section{Topological complexity of irregular set}\label{sec proof residual}
In this section, we will show that the $\omega$-fiber of the irregular set $I_{\varphi}$ is \emph{residual} for $\mathbb{P}$-a.e. $\omega\in\Omega$. The main proof is inspired by \cite[Theorem 1]{MariaPaulo2021genericityofhistoricbehavior}, and divided into two parts. Besides, the proof of all lemmas is collected in section \ref{sec proof of lemmas}.
\subsection{A general construction of a type of dense subset}\label{subsection general construction}
The main result of this subsection is the following proposition.
\begin{proposition}\label{proposition three dense subsets}
   Let $\varphi\in C(\Omega\times M,\mathbb{R})$ be any continuous observable. Then for $\mathbb{P}$-a.e. $\omega\in\Omega$, the following sets
	\begin{align}
		&I_{\varphi}(\omega)=\left\{x\in M:\lim_{n\to\infty}\frac{1}{n}\sum_{i=0}^{n-1}\varphi(\Theta^i(\omega,x))\mbox{ does not exist}\right\},\nonumber\\ &K_{\varphi,\alpha_0}(\omega)=\left\{x\in M:\lim_{n\to\infty}\frac{1}{n}\sum_{i=0}^{n-1}\varphi(\Theta^i(\omega,x))=\alpha_0\right\},\nonumber\\
		&K_{\varphi,\alpha_1}(\omega)=\left\{x\in M:\lim_{n\to\infty}\frac{1}{n}\sum_{i=0}^{n-1}\varphi(\Theta^i(\omega,x))=\alpha_1\right\}.\nonumber
	\end{align}
are dense in $M$, where $\alpha_0,\alpha_1$ are defined in lemma \ref{lemma different integral values}.
\end{proposition}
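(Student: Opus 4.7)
The plan is to modify the fiber Moran-like fractal construction of Section~\ref{sec proof} by prepending a one-point anchor block $\{x_0\}$ at time $0$; the fiber specification property then confines the resulting fractal to an arbitrary ball around $x_0$. Combined with a parallel construction for the level sets, this will establish density of all three sets for $\mathbb{P}$-a.e.\ $\omega$.

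First I would pass to $\widehat{\Omega}^* := \bigcap_{n \in \mathbb{Z}} \theta^n \widehat{\Omega}$, a $\theta$-invariant full-measure refinement of the good set $\widehat{\Omega}$ from Section~\ref{sec proof}, so that shifting the base point by a bounded amount keeps $\omega$ in the good set. On $\widehat{\Omega}^*$, the alternating construction of Section~\ref{sec proof} already yields $\mathfrak{I}(\omega) \subset I_\varphi(\omega)$. An entirely parallel construction, invoking Lemma~\ref{lemma deviation set} always with $\mu = \mu_i$ rather than alternating between $\mu_0$ and $\mu_1$, produces analogous nonempty fractals inside $K_{\varphi,\alpha_i}(\omega)$: the corresponding analogue of Lemma~\ref{lemma different limits} then yields a single Birkhoff limit $\alpha_i$ in place of two distinct accumulation values.

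Next, fixing $\omega \in \widehat{\Omega}^*$ and an arbitrary open ball $B(x_0,r) \subset M$, I would choose shadowing precisions $\{\eta_k\}_{k \geq 1}$ decreasing to $0$ with $\sum_k \eta_k < r/2$ (for instance $\eta_k = \eta/2^{K+k}$ for $K$ sufficiently large), choose $\epsilon_0 \in (0, r/2)$, set $m_0 = m(\epsilon_0)$, and redefine $T_0^1(\omega)$ to be the first $i \geq m_0 + 1$ with $\theta^i \omega \in \Omega_1$. The anchor $\{x_0\}$ at time $0$, combined with the usual first-level Moran blocks starting at $T_0^1(\omega)$, forms an $m_0$-spaced $\omega$-specification, and Theorem~\ref{thm fiber specification} therefore places the first-level centers $H_1(\omega)$ inside $B(x_0, \epsilon_0)$. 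Running the subsequent levels $H_k$ and $\mathfrak{I}_k$ as in Section~\ref{sec proof} (using shadowing precision $\eta_k$ at level $k$), the resulting fractal $\mathfrak{I}(\omega)$ lies inside $\overline{B(x_0, \epsilon_0 + \sum_k \eta_k)} \subset B(x_0, r)$ and inherits the Birkhoff behaviour of its unanchored counterpart.

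The main obstacle I anticipate lies in the level set case: ensuring \emph{exact} convergence of the Birkhoff average to $\alpha_i$, since Section~\ref{sec proof} only produces convergence along the subsequence of times $\{T_{N_k}^k(\omega)\}_{k \geq 1}$. Convergence along \emph{all} times must then be extracted by combining the deviation bound $4\delta_k \to 0$ on full Moran blocks with the growth controls~\eqref{construction of Nk} (which yield $\hat{n}_{k+1}^M / T_{N_k}^k(\omega) \to 0$) and the gap estimate~\eqref{estimate of k bad point}; together these bound the partial-block contribution to the Birkhoff average at any intermediate time. The anchor itself and its length-$m_0$ gap contribute only $O(1/n)$ and are therefore harmless. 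Apart from this, the verification reuses the estimates of Lemmas~\ref{lemma different limits}–\ref{lemma I full measure} essentially verbatim.
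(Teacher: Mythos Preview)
Your proposal is correct, but it takes a heavier route than the paper. Both arguments share the same localisation idea: prepend a singleton anchor block and use fiber specification to confine the construction to any prescribed ball. The difference lies in \emph{what} is being anchored. You re-run the full Moran fractal machinery of Section~\ref{sec proof} (with the alternation switched off for the level sets), whereas the paper uses a much lighter \emph{orbit-copying} construction: given any $(\omega,x)$, it builds points $z$ near $x_j$ that shadow longer and longer orbit segments of $x$ itself, and shows (Lemma~\ref{lemma limit of difference exists}) that $\tfrac{1}{n}\sum\varphi(\Theta^i(\omega,z)) - \tfrac{1}{n}\sum\varphi(\Theta^i(\omega,x)) \to 0$. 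This single lemma transfers \emph{any} Birkhoff behaviour of $x$ (irregularity, or convergence to $\alpha_0$ or $\alpha_1$) to a dense set of $z$'s, so all three density claims reduce to the already-established nonemptiness of the three fiber sets.

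The trade-offs: the paper's approach is shorter, treats all three sets uniformly, and sidesteps precisely the obstacle you flagged --- since $z$ tracks a single orbit of a point that already has the desired asymptotic behaviour, full-time convergence to $\alpha_i$ comes for free rather than requiring a partial-block analysis. On the other hand, your approach is self-contained: it does not need the external citation \cite[Remark 4.3]{variationalprincipleforrandomanosovsystems} for the nonemptiness of $K_{\varphi,\alpha_i}(\omega)$, since your anchored non-alternating fractal manufactures such points directly. Your handling of the partial-block issue via $\hat{n}_{k+1}^M/T_{N_k(\omega)}^k(\omega)\to 0$ (which follows from~\eqref{construction of Nk}) is correct.
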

\begin{proof}[Proof of Proposition \ref{proposition three dense subsets}]
We are going to prove a slightly stronger version of the proposition \ref{proposition three dense subsets}. That is, for any $(\omega,x)\in\Omega\times M$, we will show that there exists a dense subset $Z$ of $M$, such that for any $z\in Z$, the following equality hold
\begin{equation}\label{limit of difference is zero}
	\lim_{n\to\infty}\left[\frac{1}{n}\sum_{i=0}^{n-1}\varphi(\Theta^i(\omega,z))-\frac{1}{n}\sum_{i=0}^{n-1}\varphi(\Theta^i(\omega,x))\right]=0.
\end{equation}
Notice that we do not require that  $\lim_{n\to\infty}\frac{1}{n}\sum_{i=0}^{n-1}\varphi(\Theta^i(\omega,x))$ exists. In fact, with the validity of \eqref{limit of difference is zero}, one can conclude that
\begin{displaymath}
	\begin{split}
		(\omega,x)\mbox{ is a $\varphi$-irregular point}&\Longleftrightarrow (\omega,z)\mbox{ is a $\varphi$-irregular point},\\ x\in K_{\varphi,\alpha_0}(\omega)&\Longleftrightarrow z\in K_{\varphi,\alpha_0}(\omega),\\x\in K_{\varphi,\alpha_1}(\omega)&\Longleftrightarrow z\in K_{\varphi,\alpha_1}(\omega).
	\end{split}
\end{displaymath}
Therefore, we remain to prove that \eqref{limit of difference is zero} holds for all $z$ in some dense subset $Z$ of $M$.

Let $\{x_j\}_{j=1}^{\infty}\subset M$ be a dense subset of $M$. For any $\epsilon>0$, let $s_k=m(\epsilon/2^k)$ be the gap corresponding to the given precision $\epsilon/2^k$ in the definition of fiber specification property, such that for each $\omega\in\Omega$, any $s_k$-spaced $\omega$-specification is $(\omega,\epsilon/2^k)$-shadowed by some point in $M$. Let $\{n_k\}_{k=1}^{\infty}\subset\mathbb{N}$ be a sequence of integers, such that for any $k\in\mathbb{N}$, the following estimations hold
\begin{equation}\label{estimation of nk}
	\begin{cases}
		n_k\geq \max\{2^{s_k},2^{s_{k+1}}\},\\  n_{k+1}\geq 2^{\sum_{i=1}^{k}(s_i+n_i)}.
	\end{cases}
\end{equation}
Define $l_k=\sum_{i=1}^{k}(s_i+n_i)$ for $k\in\mathbb{N}$, and the following limits exist due to \eqref{estimation of nk}
\begin{align}
	&\lim_{k\to \infty}\frac{s_k}{n_k}=0.\label{lim sk over nk}\\ &\lim_{k\to\infty}\frac{s_{k+1}}{n_k}=0.\label{lim sk+1 over nk}\\&\lim_{k\to\infty}\frac{l_k}{n_{k+1}}=0.\label{lim lk over nk+1}
\end{align}

We define a sequence of points $\{y_j^{k}\}_{k=1}^{\infty}$ depending on $x_j$, $x$ and $\omega$ inductively.
By Lemma \ref{thm fiber specification}, there exists $y_j^1\in M$, which $(\omega,\epsilon/2)$-shadows two pieces of orbits
\begin{equation*}
	\{x_j\},\left\{F_{\omega}^{s_1}x,\dots,F_{\omega}^{l_1-1}x\right\}.
\end{equation*}
That is
\begin{equation}\label{equation shadowing 1st level}
	d_{M}(y_j^1,x_j)<\frac{\epsilon}{2},\quad d_{\theta^{s_1}\omega}^{n_1}(F_{\omega}^{s_1}y_j^1, F_{\omega}^{s_1}x)<\frac{\epsilon}{2}.
\end{equation}
Assume that $y_j^k$ has already been defined, we define $y_j^{k+1}$ utilizing the fiber specification property. For $\epsilon/2^{k+1}>0$, there exists $y_j^{k+1}\in M$ which $(\omega,\epsilon/2^{k+1})$-shadows two pieces of orbits
\begin{equation*}
	\{y_j^k,\dots, F_{\omega}^{l_k-1}y_j^k\},\{F_{\omega}^{l_k+s_{k+1}}x, F_{\omega}^{l_{k+1}-1}x\}.
\end{equation*}
That is
\begin{equation}\label{equation shadowing property yk+1 and yk}
	d_{\omega}^{l_k}(y_j^{k+1}, y_j^k)<\frac{\epsilon}{2^{k+1}},\quad d_{\theta^{l_k+s_{k+1}}\omega}^{n_{k+1}}(F_{\omega}^{l_k+s_{k+1}}y_j^{k+1}, F_{\omega}^{l_k+s_{k+1}}x)<\frac{\epsilon}{2^{k+1}}.
\end{equation}
\begin{lemma}\label{lemma contain and nonempty}
	For every $k\geq 1$, we have
	\begin{equation}\label{equation contain}
		\mathcal{B}_{l_{k+1}}\left(\omega,y_j^{k+1},\frac{\epsilon}{2^{k+1}}\right)\subset\mathcal{B}_{l_k}\left(\omega,y_j^k,\frac{\epsilon}{2^k}\right),
	\end{equation}
and
	\begin{equation}\label{equation nonempty}
	 \bigcap_{k\geq 1}\mathcal{B}_{l_k}\left(\omega,y_j^k,\frac{\epsilon}{2^{k}}\right)\neq\varnothing.
	\end{equation}
\end{lemma}

Define
\begin{equation*}
	\mathcal{Z}_{\varphi}(\omega,x,x_j,\epsilon):=\bigcap_{k\geq1}\mathcal{B}_{l_k}\left(\omega,y_j^k,\frac{\epsilon}{2^{k}}\right).
\end{equation*}

\begin{lemma}\label{lemma limit of difference exists}
	For any $z\in \mathcal{Z}_{\varphi}(\omega,x,x_j,\epsilon)$, the following holds
	\begin{equation*}
		\lim_{n\to\infty}\left[\frac{1}{n}\sum_{i=0}^{n-1}\varphi(\Theta^i(\omega,x))-\frac{1}{n}\sum_{i=0}^{n-1}\varphi(\Theta^i(\omega,z))\right]=0.
	\end{equation*}
\end{lemma}

We pick a countable sequence $\epsilon_k\to 0$, and define
\begin{displaymath}
	\mathcal{Z}_{\varphi}(\omega,x)=\bigcup_{j\geq1}\bigcup_{k\geq 1} \mathcal{Z}_{\varphi}(\omega,x,x_j,\epsilon_k).
\end{displaymath}
\begin{lemma}\label{lemma dense set is abundant}
	For any $(\omega,x)\in\Omega\times M,$ $\mathcal{Z}_{\varphi}(\omega,x)$ is dense in $M$.
\end{lemma}

Combining lemma \ref{lemma limit of difference exists} and lemma \ref{lemma dense set is abundant}, we conclude that there exists a dense subset $Z$ of $M$, such that \eqref{limit of difference is zero} holds for any $z\in Z$. Therefore, it remains to show that for $\mathbb{P}$-a.e $\omega\in\Omega$, the $\omega$-fibers of $I_{\varphi}, K_{\varphi,\alpha_0}$ and $K_{\varphi,\alpha_1}$ are nonempty. Notice that we have shown that
\begin{displaymath}
	\mathbb{P}(\{\omega\in\Omega:I_{\varphi}(\omega)\neq\varnothing\})
\end{displaymath} in the proof of theorem \ref{thm irregular set}. Besides, the authors have already established in \cite[Remark 4.3]{variationalprincipleforrandomanosovsystems} that
for our  target system $(\Omega\times M,\Theta)$, the following fact
\begin{displaymath}
	\mathbb{P}(\pi_{\Omega}(K_{\varphi,\alpha}))=\mathbb{P}\left(\left\{\omega\in\Omega: K_{\varphi,\alpha}(\omega)\neq\varnothing\right\}\right)=1
\end{displaymath}
hold for any $\alpha\in\{\int\varphi\dif\mu:\mu\in I_{\Theta}(\Omega\times M)\}.$ By the construction in lemma \ref{lemma different integral values}, both $\alpha_0$ and $\alpha_1$ are obtained through the integration of $\varphi$ with respect to the $\Theta$-invariant measures $\mu_0$ and $\mu_1$. Therefore, we finally conclude that for $\mathbb{P}$-a.e. $\omega\in\Omega$, the sets $I_{\varphi}(\omega), K_{\varphi,\alpha_0}(\omega)$ and $K_{\varphi,\alpha_1}(\omega)$ are all dense in $M$.
This finishes the proof of proposition \ref{proposition three dense subsets}.
\end{proof}

\subsection{Fibers of the irregular set are residual}
In subsection \ref{subsection general construction}, we have already show that there exists a $\mathbb{P}$-full measure set $\overline{\Omega}$, such that for any $\omega\in \overline{\Omega}$, sets $I_{\varphi}(\omega),K_{\varphi,\alpha_0}(\omega)$ and $K_{\varphi,\alpha_1}(\omega)$ are dense in $M$.
\begin{proposition}\label{proposition residual}
	For any $\omega\in\overline{\Omega}$, the $\omega$-fiber $I_{\varphi}(\omega)$ is residual in $M$.
\end{proposition}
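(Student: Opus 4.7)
The plan is to exhibit $I_{\varphi}(\omega)$ as a superset of the intersection of two explicit dense $G_{\delta}$ subsets of $M$, and then to invoke the Baire category theorem. By Lemma \ref{lemma different integral values} one has $\alpha_0\neq\alpha_1$, and after swapping indices if necessary I assume $\alpha_0<\alpha_1$. For each fixed $\omega\in\Omega$ and $n\geq 1$, the partial average $A_n(\omega,x):=\frac{1}{n}\sum_{i=0}^{n-1}\varphi(\Theta^i(\omega,x))$ is continuous in $x\in M$, because $\varphi$ is continuous on $\Omega\times M$ and each $F_{\omega}^i$ is a $C^2$-diffeomorphism of $M$. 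I then set
\[
G^-(\omega):=\Big\{x\in M:\liminf_{n\to\infty}A_n(\omega,x)\leq\alpha_0\Big\},\qquad G^+(\omega):=\Big\{x\in M:\limsup_{n\to\infty}A_n(\omega,x)\geq\alpha_1\Big\},
\]
and verify the standard representation
\[
G^-(\omega)=\bigcap_{k\geq 1}\bigcap_{N\geq 1}\bigcup_{n\geq N}\{x\in M: A_n(\omega,x)<\alpha_0+1/k\},
\]
together with its analogue for $G^+(\omega)$. Since each innermost set is open, both $G^{\pm}(\omega)$ are $G_{\delta}$ subsets of $M$.

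The density step is where Proposition \ref{proposition three dense subsets} does the real work. Fix $\omega\in\overline{\Omega}$. For every $x\in K_{\varphi,\alpha_0}(\omega)$ the Birkhoff average $A_n(\omega,x)$ converges to $\alpha_0$, which forces $\liminf_n A_n(\omega,x)=\alpha_0$, so $K_{\varphi,\alpha_0}(\omega)\subset G^-(\omega)$; since $K_{\varphi,\alpha_0}(\omega)$ is dense in $M$ by Proposition \ref{proposition three dense subsets}, so is $G^-(\omega)$. The symmetric argument using $K_{\varphi,\alpha_1}(\omega)$ shows density of $G^+(\omega)$.

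To finish, $M$ is a compact (and hence complete) metric space, so the Baire category theorem yields that $G^-(\omega)\cap G^+(\omega)$ is a dense $G_{\delta}$ subset of $M$. Any $x$ in this intersection satisfies
\[
\liminf_{n\to\infty} A_n(\omega,x)\leq\alpha_0<\alpha_1\leq\limsup_{n\to\infty} A_n(\omega,x),
\]
whence the Birkhoff limit does not exist at $(\omega,x)$, and thus $x\in I_{\varphi}(\omega)$. This exhibits $I_{\varphi}(\omega)$ as a superset of a dense $G_{\delta}$ set, which is exactly what it means to be residual in $M$. I do not anticipate any real obstacle here beyond the routine $G_{\delta}$-bookkeeping; all the specification-based orbit-gluing work has already been absorbed into Proposition \ref{proposition three dense subsets}.
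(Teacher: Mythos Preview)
Your proof is correct and takes a genuinely different route from the paper's. Both arguments rest entirely on Proposition \ref{proposition three dense subsets} for the density of $K_{\varphi,\alpha_0}(\omega)$ and $K_{\varphi,\alpha_1}(\omega)$, but they organize the Baire-category step differently.

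The paper works on the complement: it fixes $0<\delta<|\alpha_0-\alpha_1|/3$, defines the closed Cauchy-type sets
\[
\Lambda_N(\omega)=\Big\{x\in M:\big|A_m(\omega,x)-A_n(\omega,x)\big|\leq\delta\ \text{for all }m,n\geq N\Big\},
\]
observes $I_{\varphi}(\omega)^c\subset\bigcup_{N\geq 1}\Lambda_N(\omega)$, and then shows each $\Lambda_N(\omega)$ has empty interior by picking approximating sequences from $K_{\varphi,\alpha_0}(\omega)$ and $K_{\varphi,\alpha_1}(\omega)$ inside a putative open subset and deriving $|\alpha_0-\alpha_1|<3\delta$. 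You instead exhibit an explicit dense $G_\delta$ set \emph{inside} $I_{\varphi}(\omega)$, namely $G^-(\omega)\cap G^+(\omega)$, using the standard representation of $\{\liminf\leq\alpha_0\}$ and $\{\limsup\geq\alpha_1\}$ as $G_\delta$ sets. Your approach is slightly more direct and yields the bonus information that the residual set can be chosen with $\liminf\leq\alpha_0$ and $\limsup\geq\alpha_1$; the paper's approach is the textbook ``complement is first category'' argument and avoids the liminf/limsup bookkeeping. Neither route needs the third density statement in Proposition \ref{proposition three dense subsets} (that $I_{\varphi}(\omega)$ itself is dense).
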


\begin{proof}[Proof of Proposition \ref{proposition residual}]
	Notice that to show  $I_{\varphi}(\omega)$ is residual is equivalent to show $I_{\varphi}(\omega)^c$ is of first category. Let us fix some $\omega\in\overline{\Omega}$ from now on.
	
	Given some sufficiently small constant $\delta$, such that
	\begin{equation}\label{equation delta}
		0<\delta<\frac{|\alpha_0-\alpha_1|}{3}.
	\end{equation}We define
\begin{align*}
   \Lambda_N(\omega)=&\left\{x\in M:\left|\frac{1}{m}\sum_{i=0}^{m-1}\varphi(\Theta^i(\omega,x))-\frac{1}{n}\sum_{i=0}^{n-1}\varphi(\Theta^i(\omega,x))\right|\leq\delta,\quad\forall m,n\geq N\right\}\\
&=\bigcap_{m,n\geq N}\left\{x\in M:\left|\frac{1}{m}\sum_{i=0}^{m-1}\varphi(\Theta^i(\omega,x))-\frac{1}{n}\sum_{i=0}^{n-1}\varphi(\Theta^i(\omega,x))\right|\leq\delta\right\}.
\end{align*}
	Note that points in $I_{\varphi}(\omega)^c$ are such that $\{\frac{1}{n}\sum_{i=0}^{n-1}\varphi(\Theta^i(\omega,x))\}_{n\in\mathbb{N}}$ converges, which implies that
	\begin{displaymath}
		I_{\varphi}(\omega)^c\subset\bigcup_{N\geq 1}\Lambda_{N}(\omega).
	\end{displaymath}
	Since $(\omega,x)\mapsto\varphi(\Theta^n(\omega,x))$ is continuous for any $n\in\mathbb{N}$, the set $\Lambda_N(\omega)$ is closed. Hence, it remains to show $\Lambda_N(\omega)$ has empty interior for any $N\in\mathbb{N}$.
	
	Assume that there exists some $N_0\in \mathbb{N}$, such that the interior of $\Lambda_{N_0}(\omega)$, denoted by $\mathrm{int}(\Lambda_{N_0}(\omega))$, is nonempty. Since $K_{\varphi,\alpha_0}(\omega)$ and $K_{\varphi,\alpha_1}(\omega)$ are dense in $M$, one can pick two sequences, named $\{x_k\}_{k\in\mathbb{N}}$ and $\{y_k\}_{k\in\mathbb{N}}$, in $K_{\varphi,\alpha_0}(\omega)\cap \Lambda_{N_0}(\omega)$ and $K_{\varphi,\alpha_1}(\omega)\cap \Lambda_{N_0}(\omega)$ respectively, such that
	\begin{displaymath}
		\lim_{k\to\infty}x_k=\lim_{k\to\infty} y_k=x^*\in \mathrm{int}(\Lambda_{N_0}(\omega)).
	\end{displaymath}
    Note that
	\begin{equation*}
		\left|\frac{1}{m}\sum_{i=0}^{m-1}\varphi(\Theta^i(\omega,x_k))-\frac{1}{n}\sum_{i=0}^{n-1}\varphi(\Theta^i(\omega,x_k))\right|\leq \delta,\quad\forall m,n\geq N_0.
	\end{equation*}
	Fixing $m\geq N_0$ and setting $n\to\infty$, we have
	\begin{equation}\label{average of xk small}
		\left|\frac{1}{m}\sum_{i=0}^{m-1}\varphi(\Theta^i(\omega,x_k))-\alpha_0\right|\leq\delta.
	\end{equation}
	Similarly, for any $m\geq N_0$, the following inequality holds
	\begin{equation}\label{average of yk small}
		\left|\frac{1}{m}\sum_{i=0}^{n-1}\varphi(\Theta^i(\omega,y_k))-\alpha_1\right|\leq\delta.
	\end{equation}
	Notice that the continuous observable $\varphi$ on compact space $\Omega\times M$ is uniformly continuous, and $F_{\omega}^n:M\to M$ is continuous for any $n\in\mathbb{N}$. Therefore, there exists some $\delta_m>0$, such that
	\begin{equation}\label{average of difference}
		d_M(x,y)<\delta_m\Longrightarrow \left|\frac{1}{m}\sum_{i=0}^{m-1}\varphi(\Theta^i(\omega,x))-\frac{1}{m}\sum_{i=0}^{m-1}\varphi(\Theta^i(\omega,y))\right|<\delta.
	\end{equation}
	Since $\{x_k\}_{k\in\mathbb{N}}$ and $\{y_k\}_{k\in\mathbb{N}}$ have the same limit $x^*$, one can pick $k$ sufficiently large, such that $d_M(x_k,y_k)<\delta_m$. Combining \eqref{average of xk small},\eqref{average of yk small} and \eqref{average of difference}, one conclude that
	\begin{displaymath}
		|\alpha_0-\alpha_1|< 3\delta,
	\end{displaymath}
	which contradicts the inequality \eqref{equation delta}. This ends the proof of proposition \ref{proposition residual}.
\end{proof}

\section{Proof of lemmas and corollaries}\label{sec proof of lemmas}

\begin{proof}[Proof of Lemma \ref{lemma different integral values}]
	 According to \eqref{irregular set is nonempty}, we have
	\begin{equation*}
		\lim_{n\to\infty}\frac{1}{n}\sum_{i=0}^{n-1}\varphi(\Theta^i(\omega^*,x^*))\mbox{ does not exist}.
	\end{equation*}
 Therefore, there exists a subsequence $\{n_k\}_{k=1}^{\infty}$, such that
 \begin{equation*}
 	\lim_{k\to\infty}\frac{1}{n_k}\sum_{i=0}^{n_k-1}\varphi(\Theta^i(\omega^*,x^*))=\alpha^*\neq\int\varphi\dif\mu_0=\alpha_0.
 \end{equation*}
Define
$\mu_k=\frac{1}{n_k}\sum_{i=0}^{n_k-1}\delta_{\Theta^i(\omega^*,x^*)},$
then we have
\begin{equation*}
	\int\varphi\dif\mu_k=\frac{1}{n_k}\sum_{i=0}^{n_k-1}\varphi(\Theta^i(\omega^*,x^*)).
\end{equation*}
By the compactness of $\mathcal{M}(\Omega\times M)$ with respect to the weak* topology, without loss of generality, we assume that there exists some $\mu^*\in \mathcal{M}(\Omega\times M)$, such that $\mu_k$ converges to $\mu^{*}$ in weak* topology. Therefore
\begin{displaymath}
	\int\varphi\dif\mu^*=\lim_{k\to\infty}\int\varphi\dif\mu_k=\alpha^*.
\end{displaymath}
By the standard Krylov and Bogolyubov's argument \cite[Theorem 6.9]{Waltersergodic}, we have $\mu^*\in I_{\Theta}(\Omega\times M)$. Hence
\begin{equation}\label{astar neq a0}
\alpha^*=\int\varphi\dif\mu^*\neq\int\varphi\dif\mu_0=\alpha_0.
\end{equation}

Finally, we remain to show the existence of $\mu_1\in I_{\Theta}(\Omega\times M)$. In fact, notice that the entropy map $\mu\mapsto h_{\mu}(\Theta)$ is affine \cite[Theorem 8.1]{Waltersergodic}. Therefore, the fiber entropy map $\mu\mapsto h_{\mu}(F)$ is affine due to Abramov-Rohlin formula \eqref{Rohlin formula}.
Hence, we have
\begin{displaymath}
	h_{\lambda\mu_0+(1-\lambda)\mu^*}(F)=\lambda h_{\mu_0}(F)+(1-\lambda)h_{\mu^*}(F),\quad\forall \lambda\in [0,1].
\end{displaymath}
Recall that we have $h_{top}(F)<\infty$ in \eqref{entropy is finite}. Therefore, $h_{\lambda\mu_0+(1-\lambda)\mu^*}$ converges to $h_{\mu_0}(F)$ as $\lambda\to 1$. Hence, we can pick some $\lambda_0\in(0,1)$ which is sufficiently close to $1$, such that for the given $\gamma>0$, the following holds
\begin{displaymath}
	h_{\mu_0}(F)-\gamma\leq h_{\lambda_0\mu_0+(1-\lambda_0)\mu^*}(F)\leq h_{\mu_0}(F).
\end{displaymath}
Notice that
\begin{displaymath}
	\begin{split}
		\int\varphi\dif(\lambda_0\mu_0+(1-\lambda_0)\mu^*)&=\lambda_0\int\varphi\dif\mu_0+(1-\lambda_0)\int\varphi\dif\mu^*\\&=\lambda_0\alpha_0+(1-\lambda_0)\alpha^*.
	\end{split}
\end{displaymath}
By (\ref{astar neq a0}), we know that $\lambda_0\alpha_0+(1-\lambda_0)\alpha^*\neq\alpha_0.$ Denote $\mu_1=\lambda_0\mu_0+(1-\lambda_0)\mu^*,$ and we have
\begin{displaymath}
	\int\varphi\dif\mu_1=\alpha_1\neq\alpha_0,
\end{displaymath}and
\begin{displaymath}
	h_{\mu_0}(F)-\gamma\leq h_{\mu_1}(F)\leq h_{\mu_0}(F).
\end{displaymath}
\end{proof}

\begin{proof}[Proof of lemma \ref{lemma decreased Ik}]
$\quad$

	(1)	By \eqref{Hk are 3eta/8 separated}, for any $x\not=x^\prime\in H_k(\omega)$, one has $d_{\omega}^{T_{N_k(\omega)}^k(\omega)}(x,x^{\prime})> {3\eta}/{8},$ hence
	\begin{displaymath}
		\overline{\mathcal{B}}_{T_{N_k(\omega)}^k(\omega)}\Big(\omega,x,\frac{\eta}{2^{4+k}}\Big)\cap \overline{\mathcal{B}}_{T_{N_k(\omega)}^k(\omega)}\Big(\omega,x^{\prime},\frac{\eta}{2^{4+k}}\Big)=\varnothing.
	\end{displaymath}
	
	(2) Let $z=z(x,y)\in H_{k+1}(\omega)$ for $x\in H_k(\omega)$ and $y\in D_{k+1}(\theta^{T_0^{k+1}(\omega)}\omega).$ Pick any point $t\in\overline{\mathcal{B}}_{T_{N_{k+1}(\omega)}^{k+1}(\omega)}(\omega,z,{\eta}/{2^{4+k+1}})$, we have
	\begin{equation*}
		d_{\omega}^{T_{N_k(\omega)}^k(\omega)}(t,x)\leq d_{\omega}^{T_{N_k(\omega)}^k(\omega)}(t,z)+d_{\omega}^{T_{N_k(\omega)}^k(\omega)}(z,x)
		\overset{\eqref{z shadow xy}}<\frac{\eta}{2^{4+k+1}}+\frac{\eta}{2^{4+k+1}}
		=\frac{\eta}{2^{4+k}},
	\end{equation*}
	Therefore, $\overline{\mathcal{B}}_{T_{N_{k+1}(\omega)}^{k+1}(\omega)}\left(\omega,z,{\eta}/{2^{4+k+1}}\right)\subset \overline{\mathcal{B}}_{T_{N_k(\omega)}^k(\omega)}\left(\omega,x,{\eta}/{2^{4+k}}\right).
	$ The proof of lemma \ref{lemma decreased Ik} is complete.
\end{proof}
\begin{proof}[Proof of Lemma \ref{lemma different limits}]
	For any $x\in\mathfrak{I}(\omega)$, we have to estimation the following quantities
	\begin{equation*}
	\left|\sum_{i=0}^{T_{N_{2k}(\omega)}^{2k}(\omega)-1}\varphi(\Theta^i(\omega,x))-T_{N_{2k}(\omega)}^{2k}(\omega)\alpha_0\right|,
	\end{equation*}
and
\begin{equation*}
	\left|\sum_{i=0}^{T_{N_{2k+1}(\omega)}^{2k+1}(\omega)-1}\varphi(\Theta^i(\omega,x))-T_{N_{2k+1}(\omega)}^{2k+1}(\omega)\alpha_1\right|.
\end{equation*}
We are going to utilize the shadowing property, triangle inequality and the estimations for points in $D_k(\theta^{T_0^k(\omega)}\omega)$ and $H_k(\omega)$.
	We divide our proof of lemma \ref{lemma different limits} into several steps.
	
	Let us introduce some notation at the beginning: for any $c>0$, put
	\begin{equation}
		\mathrm{var}(\varphi,c)=\sup\{|\varphi(\omega,x)-\varphi(\omega^{\prime},x^{\prime})|:d((\omega,x),(\omega^{\prime},x^{\prime}))<c\}.
	\end{equation}
	Note that, $\mathrm{var}(\varphi,c)\to 0$ as $c\to 0$ due to the compactness of $\Omega\times M$.
	
	\emph{Step 1: estimation on $D_1(\omega)$.} For any $y=y(x_0^1,\dots,x_{N_1(\omega)-1}^1)\in D_1(\omega)$, we have
	\begin{equation}
		d_{\theta^{T_j^1(\omega)}\omega}^{\hat{n}_1(\theta^{T_j^1(\omega)}\omega)}\left(x_j^1,F_{\omega}^{T_{j}^1(\omega)}y\right)<\frac{\eta}{2^{4+1}},\quad\forall j\in\{0,\dots,N_1(\omega)-1\}.
	\end{equation}
Notice that we have the following decomposition of interval $[0,T_{N_1(\omega)}^1(\omega)-1],$ i.e.
\begin{equation}\label{shadowing property on D1}
	\begin{split}
		&[0,T_0^1(\omega)-1]\cup[T_0^1(\omega),T_0^1+\hat{n}(\theta^{T_0^1(\omega)}\omega)-1]\cup[T_0^1(\omega)+\hat{n}_1(\theta^{T_0^1(\omega)}\omega),T_1^1(\omega)-1]\\&\quad\cup\cdots\cup[T_{N_1(\omega)-1}^1(\omega),T_{N_1(\omega)-1}^1(\omega)+\hat{n}_1(\theta^{T_{N_1(\omega)-1}^1(\omega)}\omega)-1].
	\end{split}
\end{equation}
We use triangle inequality, shadowing property \eqref{shadowing property on D1} and the fact that $$x_j^1\in C_1(\theta^{T_j^1(\omega)}\omega)\subset P(\alpha_{\rho(1)},4\delta_1,\hat{n}_1(\theta^{T_j^1(\omega)}\omega),\theta^{T_j^1(\omega)}\omega),\quad\forall j\in\{0,\dots,N_1(\omega)-1\}$$ on intervals $\cup_{j=0}^{N_1(\omega)-1}[T_j^1(\omega),T_j^1(\omega)+\hat{n}_1(\theta^{T_j^1(\omega)}\omega)-1],$ while on other intervals, we use the trivial estimation $|\varphi-\alpha_{\rho(1)}|\leq 2\|\varphi\|_{C^0}.$
Therefore, the following estimation holds
\begin{equation}\label{estimation on D1}
	\begin{split}		
		&\quad \left|\sum_{i=0}^{T_{N_1(\omega)}^1(\omega)-1}\varphi(\Theta^i(\omega,y))-T_{N_1(\omega)}^1(\omega)\alpha_{\rho(1)}\right|\\
		&\leq\sum_{j=0}^{N_1(\omega)-1}\left|\sum_{i=T_j^1(\omega)}^{T_j^1(\omega)+\hat{n}_1(\theta^{T_j^1(\omega)}\omega)-1}\left(\varphi(\Theta^i(\omega,y))-\varphi(\Theta^{i-T_j^1(\omega)}(\theta^{T_j^1(\omega)}\omega,x_j^1))\right)\right|\\
		&\quad\quad+\sum_{j=0}^{N_1(\omega)-1}\left|\sum_{i=T_j^1(\omega)}^{T_j^1(\omega)+\hat{n}_1(\theta^{T_j^1(\omega)}\omega)-1}\varphi(\Theta^{i-T_j^1(\omega)}(\theta^{T_j^1(\omega)}\omega,x_j^1))-\hat{n}_1(\theta^{T_j^1(\omega)}\omega)\alpha_{\rho(1)}\right|\\
		&\quad\quad+2\left(T_0^1(\omega)+\sum_{j=1}^{N_1(\omega)-1}(m_1+l_j^1(\omega))\right)\|\varphi\|_{C^0}\\
		&\leq\sum_{j=0}^{N_1(\omega)-1}\hat{n}_1(\theta^{T_j^1(\omega)}\omega)\mathrm{var}\left(\varphi,\frac{\eta}{2^{4+1}}\right)+4\sum_{j=0}^{N_1(\omega)-1}\hat{n}_1(\theta^{T_j^1(\omega)}\omega)\delta_1 \\
		&\quad\quad +2\left(T_0^1(\omega)+\sum_{j=1}^{N_1(\omega)-1}(m_1+l_j^1(\omega))\right)\|\varphi\|_{C^0}.
	\end{split}
\end{equation}

\emph{Step 2: estimation on $D_k(\theta^{T_0^k(\omega)}\omega)$ for $k\geq 2$.} For any $y\in D_k(\theta^{T_0^k(\omega)}\omega)$, we have to estimate the following quantity
\begin{displaymath}
	\begin{split}
		   \Bigg|\sum_{i=T_0^k(\omega)}^{T_{N_k(\omega)}^k(\omega)-1}\varphi(\Theta^{i-T_0^k(\omega)}(\theta^{T_0^k(\omega)}\omega,y))-(T_{N_k(\omega)}^k(\omega))-T_0^k(\omega)\alpha_{\rho(k)}\Bigg|.
	\end{split}
\end{displaymath}

For any $y=y(x_0^k,\dots,x_{N_k(\omega)-1}^k)\in D_k(\theta^{T_0^k(\omega)}\omega)$, we have
\begin{equation*}\label{shadowing property on Dk}
	d_{\theta^{T_j^{k}(\omega)}\omega}^{\hat{n}_{k}(\theta^{T_j^{k}(\omega)}\omega)}\Big(x_j^{k}, F_{\theta^{T_0^{k}(\omega)}\omega}^{T_{j}^{k}(\omega)-T_0^{k}(\omega)}y\Big)<\frac{\eta}{2^{4+k}}.
\end{equation*}
Notice that we have the following decompostion of the interval $[T_0^k(\omega),T_{N_k(\omega)}^k(\omega)-1]$, i.e.
\begin{displaymath}
	\begin{split}
		&[T_0^k(\omega),T_0^k(\omega)+\hat{n}_k(\theta^{T_0^k(\omega)}\omega)-1]\cup[T_0^k(\omega)+\hat{n}_k(\theta^{T_0^k(\omega)}\omega),T_1^k(\omega)-1]\\
		&\quad\cup\cdots\cup[T_{N_k(\omega)-1}^k(\omega),T_{N_k(\omega)-1}^k(\omega)+\hat{n}_k(\theta^{T_{N_k(\omega)-1}^k(\omega)}\omega)-1].
	\end{split}
\end{displaymath}
Similar as the estimation \eqref{estimation on D1}, the following estimation holds
\begin{equation*}
	\begin{split}
	\quad\quad\quad&\Bigg|\sum_{i=T_0^k(\omega)}^{T_{N_k(\omega)}^k(\omega)-1}\varphi(\Theta^{i-T_0^k(\omega)}(\theta^{T_0^k(\omega)}\omega,y))-(T_{N_k(\omega)}^k(\omega)-T_0^k(\omega))\alpha_{\rho(k)}\Bigg|\label{eq estimate on Dk}\\
	\leq &\sum_{j=0}^{N_k(\omega)-1}\left|\sum_{i=T_j^k(\omega)}^{T_j^k(\omega)+\hat{n}_k(\theta^{T_j^k(\omega)}\omega)-1}\left(\varphi(\Theta^{i-T_0^k(\omega)}(\theta^{T_0^k(\omega)}\omega,y))-\varphi(\Theta^{i-T_j^k(\omega)}(\theta^{T_j^k(\omega)}\omega,x_j^k))\right)\right|\\
	&\quad+\sum_{j=0}^{N_k(\omega)-1}\left|\sum_{i=T_j^k(\omega)}^{T_j^k(\omega)+\hat{n}_k(\theta^{T_j^k(\omega)}\omega)-1}\varphi(\Theta^{i-T_j^k(\omega)}(\theta^{T_j^k(\omega)}\omega,x_j^k))-\hat{n}_k(\theta^{T_j^k(\omega)}\omega)\alpha_{\rho(k)}\right|\\
	&\quad+2\sum_{j=1}^{N_k(\omega)-1}(m_k+l_j^k(\omega))\|\varphi\|_{C^0}\\
	\leq&\sum_{j=0}^{N_k(\omega)-1}\hat{n}_k(\theta^{T_j^k(\omega)}\omega)\mathrm{var}\left(\varphi,\frac{\eta}{2^{4+k}}\right)+4\sum_{j=0}^{N_k(\omega)-1}\hat{n}_k(\theta^{T_j^k(\omega)}\omega)\delta_k+2\sum_{j=1}^{N_k(\omega)-1}(m_k+l_j^k(\omega))\|\varphi\|_{C^0}.
	\end{split}
\end{equation*}

\emph{Step 3: estimation on $H_k(\omega)$.} We introduce the following notation
\begin{equation}\label{Rk equation}
	\begin{split}
		&R_k(\omega)=\max_{z\in H_{2k-1}(\omega)}\left|\sum_{i=0}^{T_{N_{2k-1}(\omega)}^{2k-1}(\omega)-1}\varphi(\Theta^i(\omega,z))-T_{N_{2k-1}(\omega)}^{2k-1}(\omega)\alpha_1\right|,\\ &W_k(\omega)=\max_{z\in H_{2k}(\omega)}\left|\sum_{i=0}^{T_{N_{2k}(\omega)}^{2k}(\omega)-1}\varphi(\Theta^i(\omega,z))-T_{N_{2k}(\omega)}^{2k}(\omega)\alpha_0\right|.
	\end{split}
\end{equation}

First, we are going to derive the relationship between $R_{k+1}(\omega)$ and $W_k(\omega)$.
For any $z\in H_{2k+1}(\omega)$, there exist $x\in H_{2k}(\omega)$ and $y\in D_{2k+1}(\theta^{T_0^{2k+1}(\omega)}\omega)$, such that $z=z(x,y),$ i.e.
\begin{equation}\label{shadowing property on H2k+1}
	d_{\omega}^{T_{N_{2k}(\omega)}^{2k}(\omega)}(x,z)<\frac{\eta}{2^{4+2k+1}},\quad d_{\theta^{T_0^{2k+1}(\omega)}\omega}^{T_{N_{2k+1}(\omega)}^{2k+1}(\omega)-T_0^{2k+1}(\omega)}\left(y,F_{\omega}^{T_{0}^{2k+1}(\omega)}z\right)<\frac{\eta}{2^{4+2k+1}}.
\end{equation}
Notice that we can decompose the interval $[0,T_{N_{2k+1}(\omega)}^{2k+1}(\omega)]$ into
\begin{displaymath}
	[0,T_{N_{2k}(\omega)}^{2k}(\omega)-1]\cup[T_{N_{2k}(\omega)}^{2k}(\omega),T_0^{2k+1}(\omega)-1]\cup [T_0^{2k+1}(\omega),T_{N_{2k+1}(\omega)}^{2k+1}(\omega)-1].
\end{displaymath}
On the interval $[0,T_{N_{2k}(\omega)}^{2k}(\omega)-1]$, we use triangle inequality, the first inequality of \eqref{shadowing property on H2k+1}, and $W_k(\omega)$. On interval $[T_{N_{2k}(\omega)}^{2k}(\omega),T_0^{2k+1}(\omega)-1]=[T_{N_{2k}(\omega)}^{2k}(\omega),T_{N_{2k}(\omega)}^{2k}(\omega)+m_{2k+1}+l^{2k,2k+1}(\omega)-1]$, we use estimate $|\varphi-\alpha_1|\leq 2\|\varphi\|_{C^0}$. On interval $[T_0^{2k+1}(\omega),T_{N_{2k+1}(\omega)}^{2k+1}(\omega)-1]$, we use triangle inequality, the second inequality of \eqref{shadowing property on H2k+1} and the estimation on $D_{2k+1}(\theta^{T_0^{2k+1}(\omega)}\omega)$. Therefore, we have the following estimation 
\begin{equation}\label{estimation of Rk}
	\begin{split}
		&\quad R_{k+1}(\omega)=\left|\sum_{i=0}^{T_{N_{2k+1}(\omega)}^{2k+1}(\omega)-1}\varphi(\Theta^i(\omega,z))-T_{N_{2k+1}(\omega)}^{2k+1}(\omega)\alpha_1\right|\\
		&\leq \left|\sum_{i=0}^{T_{N_{2k}(\omega)}^{2k}(\omega)-1}\left(\varphi(\Theta^i(\omega,z))-\varphi(\Theta^i(\omega,x))\right)\right|+\left|\sum_{i=0}^{T_{N_{2k}(\omega)}^{2k}(\omega)-1}\varphi(\Theta^i(\omega,x))-T_{N_{2k}(\omega)}^{2k}(\omega)\alpha_1\right| \\
		&\quad+2(m_{2k+1}+l^{2k,2k+1}(\omega))\|\varphi\|_{C^0}+\left|\sum_{i=T_{0}^{2k+1}(\omega)}^{T_{N_{2k+1}(\omega)}^{2k+1}(\omega)-1}\left(\varphi(\Theta^i(\omega,z))-\varphi(\Theta^{i-T_0^{2k+1}(\omega)}(\theta^{T_0^{2k+1}(\omega)}\omega,y))\right)\right|\\
		&\quad+\left|\sum_{i=T_{0}^{2k+1}(\omega)}^{T_{N_{2k+1}(\omega)}^{2k+1}(\omega)-1}\varphi(\Theta^{i-T_0^{2k+1}(\omega)}(\theta^{T_0^{2k+1}(\omega)}\omega,y))-(T_{N_{2k+1}(\omega)}^{2k+1}(\omega)-T_0^{2k+1}(\omega))\alpha_1\right|\\
		&\leq T_{N_{2k}(\omega)}^{2k}(\omega)\mathrm{var}\left(\varphi,\frac{\eta}{2^{4+2k+1}}\right)+W_k(\omega)+T_{N_{2k}(\omega)}^{2k}|\alpha_1-\alpha_0|+2(m_{2k+1}+l^{2k,2k+1}(\omega))\|\varphi\|_{C^0}\\
		&\quad +\left(T_{N_{2k+1}(\omega)}^{2k+1}(\omega)-T_0^{2k+1}(\omega)\right)\mathrm{var}\left(\varphi,\frac{\eta}{2^{4+2k+1}}\right)+\sum_{j=0}^{N_{2k+1}(\omega)-1}\hat{n}_{2k+1}(\theta^{T_j^{2k+1}(\omega)}\omega)\mathrm{var}\left(\varphi,\frac{\eta}{2^{4+2k+1}}\right)\\
		&\quad +4\sum_{j=0}^{N_{2k+1}(\omega)-1}\hat{n}_{2k+1}(\theta^{T_j^{2k+1}(\omega)}\omega)\delta_{2k+1}+2\sum_{j=1}^{N_{2k+1}(\omega)-1}(m_{2k+1}+l_j^{2k+1}(\omega))\|\varphi\|_{C^0}\\
		&\leq W_k(\omega)+ T_{N_{2k}(\omega)}^{2k}(\omega)|\alpha_1-\alpha_0| +2T_{N_{2k+1}(\omega)}^{2k+1}(\omega)\mathrm{var}\left(\varphi,\frac{\eta}{2^{4+2k+1}}\right)\\
		&\quad+2\Big(N_{2k+1}(\omega)m_{2k+1}+l^{2k,2k+1}(\omega)+\sum_{j=1}^{N_{2k+1}(\omega)-1}l_j^{2k+1}(\omega)\Big)\|\varphi\|_{C^0}+4T_{N_{2k+1}(\omega)}^{2k+1}(\omega)\delta_{2k+1}.
	\end{split}
\end{equation}
In other words, we have derived a relationship between $R_{k+1}(\omega)$ and $W_k(\omega)$, i.e.
\begin{equation}\label{estimation of Rk+1 and Wk}
	\begin{split}
		 R_{k+1}(\omega)&\leq W_k(\omega)+T_{N_{2k}(\omega)}^{2k}(\omega)|\alpha_1-\alpha_0|+2T_{N_{2k+1}(\omega)}^{2k+1}(\omega)\mathrm{var}\left(\varphi,\frac{\eta}{2^{4+2k+1}}\right)\\&\quad+2\Big(N_{2k+1}(\omega)m_{2k+1}+l^{2k,2k+1}(\omega)+\sum_{j=1}^{N_{2k+1}(\omega)-1}l_j^{2k+1}(\omega)\Big)\|\varphi\|_{C^0}\\&\quad+4T_{N_{2k+1}(\omega)}^{2k+1}(\omega)\delta_{2k+1}.
	\end{split}
\end{equation}

As for $W_k(\omega)$, by using exactly the same way for estimation in \eqref{estimation of Rk}, we have
\begin{equation}\label{estimation of Wk and Rk}
	\begin{split}
		W_k(\omega)&\leq R_k(\omega)+T_{N_{2k-1}(\omega)}^{2k-1}(\omega)|\alpha_1-\alpha_0|+2T_{N_{2k}(\omega)}^{2k}(\omega)\mathrm{var}\left(\varphi,\frac{\eta}{2^{4+2k}}\right)\\&\quad+2\Big(N_{2k}(\omega)m_{2k}+l^{2k-1,2k}(\omega)+\sum_{j=1}^{N_{2k}(\omega)-1}l_j^{2k}(\omega)\Big)\|\varphi\|_{C^0}\\&\quad+4T_{N_{2k}(\omega)}^{2k}(\omega)\delta_{2k}.
	\end{split}
\end{equation}
Combining \eqref{estimation of Rk+1 and Wk} and \eqref{estimation of Wk and Rk} together, we conclude that
\begin{equation}\label{recursion of Rk+1}
	\begin{split}
		R_{k+1}(\omega)&\leq R_k(\omega)+\left(T_{N_{2k-1}(\omega)}^{2k-1}(\omega)+T_{N_{2k}(\omega)}^{2k}(\omega)\right)|\alpha_1-\alpha_0|+2T_{N_{2k}(\omega)}^{2k}(\omega)\mathrm{var}\left(\varphi,\frac{\eta}{2^{4+2k}}\right)\\&\quad+2T_{N_{2k+1}(\omega)}^{2k+1}(\omega)\mathrm{var}\left(\varphi,\frac{\eta}{2^{4+2k+1}}\right)+4\left(T_{N_{2k}(\omega)}^{2k}(\omega)\delta_{2k}+T_{N_{2k+1}(\omega)}^{2k+1}(\omega)\delta_{2k+1}\right)\\&\quad+2\Big(N_{2k}(\omega)m_{2k}+N_{2k+1}(\omega)m_{2k+1}+l^{2k-1,2k}(\omega)\\&\quad\quad+l^{2k,2k+1}(\omega)+\sum_{j=1}^{N_{2k}(\omega)-1}l_j^{2k}(\omega)+\sum_{j=1}^{N_{2k+1}(\omega)-1}l_j^{2k+1}(\omega)\Big)\|\varphi\|_{C^0}.\\
	\end{split}
\end{equation}
Similarly, one can obtain that
\begin{align}\label{recursion of Wk+1}
	\qquad\quad W_{k+1}(\omega)&\leq W_k(\omega)+\left(T_{N_{2k}(\omega)}^{2k}(\omega)+T_{N_{2k+1}(\omega)}^{2k+1}(\omega)\right)|\alpha_1-\alpha_0|+2T_{N_{2k+1}(\omega)}^{2k+1}(\omega)\mathrm{var}\left(\varphi,\frac{\eta}{2^{4+2k+1}}\right)\nonumber\\&\quad+2T_{N_{2k+2}(\omega)}^{2k+2}(\omega)\mathrm{var}\left(\varphi,\frac{\eta}{2^{4+2k+2}}\right)+4\left(T_{N_{2k+1}(\omega)}^{2k+1}(\omega)\delta_{2k+1}+T_{N_{2k+2}(\omega)}^{2k+2}(\omega)\delta_{2k+2}\right)\nonumber\\&\quad+2\Big(N_{2k+1}(\omega)m_{2k+1}+N_{2k+2}(\omega)m_{2k+2}+l^{2k,2k+1}(\omega)\\&\quad\quad+l^{2k+1,2k+2}(\omega)+\sum_{j=1}^{N_{2k+1}(\omega)-1}l_j^{2k+1}(\omega)+\sum_{j=1}^{N_{2k+2}(\omega)-1}l_j^{2k+2}(\omega)\Big)\|\varphi\|_{C^0}.\nonumber
\end{align}

Next, we claim that
\begin{align}
	&\lim_{k\to\infty}\frac{R_{k+1}(\omega)}{T_{N_{2k+1}(\omega)}^{2k+1}(\omega)}=0,\label{quotient of Rk+1 and T2k+1}\\&\lim_{k\to\infty}\frac{W_{k+1}(\omega)}{T_{N_{2k+2}(\omega)}^{2k+2}(\omega)}=0.\label{quotient of Wk+1 and T2k+2}
\end{align}
We are going to illustrate how to prove \eqref{quotient of Rk+1 and T2k+1}, and the same proof can be applied to \eqref{quotient of Wk+1 and T2k+2} directly without any essential change.

Combining \eqref{recursion of Rk+1},\eqref{estimation on D1} and the fact that $D_1(\omega)=H_1(\omega)$, one has the following estimation for $R_{k+1}(\omega)$
\begin{equation}\label{estimation of Rk+1}
	\begin{split}
		R_{k+1}(\omega)&\leq 4\left[\sum_{i=1}^{2k+1}T_{N_i(\omega)}^i(\omega)\mathrm{var}\left(\varphi,\frac{\eta}{2^{4+i}}\right)+2\delta_i\right]+2\sum_{i=1}^{2k}T_{N_i(\omega)}^i(\omega)|\alpha_1-\alpha_0|\\&\quad+4\Big(\sum_{i=1}^{2k+1}N_i(\omega)m_i+\sum_{i=0}^{2k}l^{i,i+1}(\omega)+\sum_{i=1}^{2k+1}\sum_{j=1}^{N_i(\omega)-1}l_j^i(\omega)\Big)\|\varphi\|_{C^0}.
	\end{split}
\end{equation}
In fact, by \eqref{estimate 1 bad point} and \eqref{estimate of k bad point}, one can obtain the following estimation of $T_{N_{k-1}\omega}^{k-1}(\omega)$ inductively, i.e.
\begin{equation}
	\begin{split}
		T_{N_{k-1}(\omega)}^{k-1}(\omega)&\leq N_1(\omega)(\hat{n}_1^M+m_1)\prod_{i=1}^{k-1}(1-\xi_i)^{-1}+N_2(\omega)(\hat{n}_2^M+m_2)\prod_{i=2}^{k-1}(1-\xi_i)^{-1}\\&\quad+\cdots+N_{k-1}(\omega)(\hat{n}_{k-1}^M+m_{k-1})(1-\xi_{k-1})^{-1}.
	\end{split}
\end{equation}
Considering the construction \eqref{construction of Nk} of $N_k(\omega)$, we have
\begin{equation*}
	0\leq\limsup_{k\to\infty}\frac{T_{N_{k-1}(\omega)}^{k-1}(\omega)}{T_{N_k(\omega)}^k(\omega)}\leq \limsup_{k\rightarrow\infty}\frac{ \sum_{j=1}^{k-1}(N_j(\omega)\cdot(\hat{n}_j^M+m_j)\cdot\prod_{i=j}^{k-1}(1-\xi_i)^{-1})}{N_k(\omega)}= 0.
	\end{equation*}
Therefore, one concludes that
\begin{equation}\label{quotient term 1}
	\lim_{k\to\infty}\frac{T_{N_{k-1}(\omega)}^{k-1}}{T_{N_k(\omega)}^k(\omega)}=0
\end{equation}It follows by Stolz's theorem that
	\begin{equation}\label{quotient term 2}
		\begin{split}
			&\quad\lim_{k\to\infty}\frac{\sum_{i=1}^{2k+1}T_{N_i(\omega)}^i(\omega)[\mathrm{var}(\varphi,{\eta}/{2^{4+i}})+2\delta_i]}{T_{N_{2k+1}(\omega)}^{2k+1}(\omega)}\\&=\lim_{k\to\infty}\frac{2T_{N_{2k+1}(\omega)}^{2k+1}(\omega)[\mathrm{var}(\varphi,{\eta}/{2^{4+2k+1}})+2\delta_{2k+1}]}{T_{N_{2k+1}(\omega)}^{2k+1}(\omega)-T_{N_{2k}(\omega)}^{2k}(\omega)}=0.
		\end{split}
	\end{equation}since the sequence $\{T_{N_k(\omega)}^k(\omega)\}_{k=1}^{\infty}$ is strictly increasing and  $T_{N_k(\omega)}^k(\omega)\to\infty$ as $k\to\infty$.
	Similarly, the following limit holds
	\begin{equation}\label{quotient term 3}
		\lim_{k\to\infty}\frac{\sum_{i=1}^{2k}T_{N_i(\omega)}^{i}(\omega)}{T_{N_{2k+1}(\omega)}^{2k+1}(\omega)}=0
	\end{equation}Besides, notice that $\hat{n}_k(\omega)\geq 2^{m_k}$ for all $k\in\mathbb{N}$, so by the construction of $T_{N_k(\omega)}^k(\omega)$, we have $$T_{N_k(\omega)}^k(\omega)\geq \sum_{i=1}^k N_i(\omega)\cdot 2^{m_i}.$$ Hence, by using Stolz's theorem again, we have
	\begin{equation*}
		\begin{split}
		\limsup_{k\to\infty}\frac{\sum_{i=1}^{2k+1} N_i(\omega)m_i}{T_{N_{2k+1}(\omega)}^{2k+1}(\omega)}&\leq \lim_{k\to\infty}\frac{\sum_{i=1}^{2k+1} N_i(\omega)m_i}{\sum_{i=1}^{2k+1}N_i(\omega)\cdot 2^{m_i}}\\&= \lim_{k\to\infty}\frac{N_{2k+1}(\omega)m_{2k+1}}{N_{2k+1}(\omega)\cdot 2^{m_{2k+1}}}=0.
		\end{split}
	\end{equation*}
  Therefore, we conclude that
  \begin{equation}\label{quotient term 4}
  	\lim_{k\to\infty}\frac{\sum_{i=1}^{2k+1} N_i(\omega)m_i}{T_{N_{2k+1}(\omega)}^{2k+1}(\omega)}=0.
  \end{equation}
	Finally, using  (\ref{estimate of k bad point}) and \eqref{quotient term 1}, we have
	\begin{align*}
		&\ \ \ \ \limsup_{k\to\infty}\frac{\sum_{i=0}^{2k}l^{i,i+1}(\omega)+\sum_{i=1}^{2k+1}\sum_{j=1}^{N_i(\omega)-1}l_j^i(\omega)}{T_{N_{2k+1}(\omega)}^{2k+1}(\omega)}\\
		&=\limsup_{k\to\infty}\frac{\sum_{i=0}^{2k-1}l^{i,i+1}(\omega)+\sum_{i=1}^{2k}\sum_{j=1}^{N_i(\omega)-1}l_j^i(\omega)+l^{2k,2k+1}(\omega)+\sum_{j=1}^{N_{2k+1}(\omega)-1}l_j^{2k+1}(\omega)}{T_{N_{2k+1}(\omega)}^{2k+1}(\omega)}\\\\
		&\leq\limsup_{k\to\infty}\frac{T_{N_{2k}(\omega)}^{2k}(\omega)+l^{2k,2k+1}(\omega)+\sum_{j=1}^{N_{2k+1}(\omega)-1}l_j^{2k+1}(\omega)}{T_{N_{2k+1}(\omega)}^{2k+1}(\omega)}\\
		&\leq\limsup_{k\to\infty}\frac{T_{N_{2k}(\omega)}^{2k}(\omega)}{T_{N_{2k+1}(\omega)}^{2k+1}(\omega)}+\xi_{2k+1}=0,
	\end{align*}
which implies that
\begin{equation}\label{quotient term 5}
	\lim_{k\to\infty}\frac{\sum_{i=0}^{2k}l^{i,i+1}(\omega)+\sum_{i=1}^{2k+1}\sum_{j=1}^{N_i(\omega)-1}l_j^i(\omega)}{T_{N_{2k+1}(\omega)}^{2k+1}(\omega)}=0.
\end{equation}
Combining \eqref{quotient term 2},\eqref{quotient term 3},\eqref{quotient term 4} and \eqref{quotient term 5} together, we arrive at
\begin{displaymath}
	\lim_{k\to\infty}\frac{R_{k+1}(\omega)}{T_{N_{2k+1}}^{2k+1}(\omega)}=0.
\end{displaymath}

\emph{Step 4: we prove that \eqref{limit of even numbers} and \eqref{limit of odd numbers} holds.}

For any $x\in\mathfrak{I}(\omega)$, there exists some $z\in H_{2k}(\omega)$, such that
\begin{equation*}
	d_{\omega}^{T_{N_{2k}(\omega)}^{2k}(\omega)}(z,x)<\frac{\eta}{2^{4+2k}}.
\end{equation*}
Therefore, we have
\begin{displaymath}
	\begin{split}
		\left|\sum_{i=0}^{T_{N_{2k}(\omega)}^{2k}(\omega)-1}\varphi(\Theta^i(\omega,x))-\alpha_0\right|&\leq\sum_{i=0}^{T_{N_{2k}(\omega)}^{2k}(\omega)-1}\left|\varphi(\Theta^i(\omega,x))-\varphi(\Theta^i(\omega,z))\right|+W_{k}(\omega)\\&\leq T_{N_{2k}(\omega)}^{2k}(\omega)\mathrm{var}\left(\varphi,\frac{\eta}{2^{4+2k}}\right)+W_k(\omega)
	\end{split}
\end{displaymath}
Divide both sides  by $T_{N_{2k}(\omega)}^{2k}(\omega)$, and we finally arrive at \eqref{limit of even numbers} as $k\to\infty$. \eqref{limit of odd numbers} can be obtained in the same way. The proof of lemma \eqref{lemma different limits} is complete.
\end{proof}

\begin{proof}[Proof of Lemma \ref{lemma limit measure}]
	It suffices to show that $\{\int_M\psi \dif\mu_{k,\omega}\}_{k=1}^{\infty}$ is a Cauchy sequence. Given $\delta>0$, we can find $K$ sufficiently large such that $\mathrm{var}(\psi,{\eta}/{2^{4+K}})<\delta$. Let $k_2>k_1\geq K$ be any integers, then
	\begin{equation*}
		\int_M\psi \dif\mu_{k_i,\omega}=\frac{1}{\# H_{k_i}(\omega)}\sum_{x\in H_{k_i}(\omega)}\psi(x)\mbox{ for }i=1,2.
	\end{equation*}
	For any $x\in H_{k_1}(\omega),$ denote $Z(x)$ to be the collection of points $z$ in $H_{k_2}(\omega)$ such that $z$ descends from $x$, i.e., there is a sequence $z_{k_1+1}\in H_{k_1+1}(\omega),\dots,z_{k_2-1}\in H_{k_2-1}(\omega)$ satisfying
	\begin{equation}\label{def of descend series}
		\begin{split}
			z&=z(z_{k_2-1},y_{k_2}),\ \exists\ y_{k_2}\in D_{k_2}(\theta^{T_0^{k_2}(\omega)}\omega),\\
			z_{k_2-1}&=z(z_{k_2-2}, y_{k_2-1}),\ \exists
			\ y_{k_2-1}\in D_{k_2}(\theta^{T_0^{k_2-1}(\omega)}\omega),\\
			&\cdots\\
			z_{k_1+1}&=z(x,y_{k_1+1}),\ \exists\ y_{k_1+1}\in D_{k_1+1}(\theta^{T_0^{k_1+1}(\omega)}\omega).
		\end{split}
	\end{equation}
	It follows that $\#Z(x)=\# D_{k_1+1}(\theta^{T_0^{k_1+1}(\omega)}\omega)\cdots\# D_{k_2}(\theta^{T_0^{k_2}(\omega)}\omega)$, and by \eqref{number Hk},
	\begin{equation}\label{Hk2=HK1ZX}
		\# H_{k_2}(\omega)=\# H_{k_1}(\omega)\cdot\# Z(x),\quad\forall x\in H_{k_1}(\omega).
	\end{equation}
	By \eqref{z shadow xy}, for any $z\in Z(x),$ we have
	\begin{equation}\label{distance descend}
		\begin{split}
			d_M(x,z)
			&\leq d_{\omega}^{T_{N_{k_1}(\omega)}^{k_1}(\omega)}(x,z)\\
			&\leq d_{\omega}^{T_{N_{k_1}(\omega)}^{k_1}(\omega)}(x,z_{k_1+1})+d_{\omega}^{T_{N_{k_1+1}(\omega)}^{k_1+1}(\omega)}(z_{k_1+1},z_{k_1+2})+\cdots+ d_{\omega}^{T_{N_{k_2-1}(\omega)}^{k_2-1}(\omega)}(z_{k_2-1},z)\\
			&\leq \frac{\eta}{2^{4+k_1+1}}+\frac{\eta}{2^{4+k_1+2}}+\cdots+\frac{\eta}{2^{4+k_2}}\\
			&\leq\frac{\eta}{2^{4+k_1}}.
		\end{split}
	\end{equation}
	Hence
	\begin{displaymath}
		\begin{split}
			\left|\int_M\psi\dif\mu_{k_1,\omega}-\int_M\psi \dif\mu_{k_2,\omega}\right|&\overset{\eqref{Hk2=HK1ZX}}\leq \frac{1}{\# H_{k_2}(\omega)}\sum_{x\in H_{k_1}(\omega)}\sum_{z\in Z(x)}|\psi(x)-\psi(z)|\\&\overset{\eqref{distance descend}}\leq \mathrm{var}\Big(\varphi,\frac{\eta}{2^{4+k_1}}\Big)\leq \mathrm{var}\Big(\varphi,\frac{\eta}{2^{4+K}}\Big)<\delta.
		\end{split}
	\end{displaymath}
	As a consequence, $\{\int_M\psi \dif\mu_{k,\omega}\}_{k=1}^{\infty}$ is a Cauchy sequence. The proof of lemma \ref{lemma limit measure} is complete.
\end{proof}
\begin{proof}[Proof of Lemma \ref{lemma I full measure}]
	For any $k\in\mathbb{N}, p\geq 0$, since $\mathfrak{I}_{k+p}(\omega)\subset \mathfrak{I}_k(\omega)$ and $\mu_{k+p,\omega}(\mathfrak{I}_{k+p}(\omega))=1,$ therefore, $\mu_{k+p,\omega}(\mathfrak{I}_k(\omega))=1.$ Note that $\mathfrak{I}_k(\omega)$ is a closed set, by the Portmanteau theorem, we have
	\begin{displaymath}
		\mu_{\omega}(\mathfrak{I}_k(\omega))\geq \limsup_{p\to\infty}\mu_{k+p,\omega}(\mathfrak{I}_k(\omega))=1.
	\end{displaymath}
	Since $\mathfrak{I}(\omega)=\cap_{k\geq 1}\mathfrak{I}_k(\omega),$ it follows that $\mu_{\omega}(\mathfrak{I}(\omega))=1.$ The proof of lemma \eqref{lemma I full measure} is complete.
\end{proof}

\begin{proof}[Proof of Lemma \ref{lemma entropy distribution}]
	We first define $N(\omega)$ in the statement of Lemma \ref{lemma entropy distribution}.
	
	\emph{Pick of $N(\omega)$.} For $\omega\in\widehat{\Omega}_\gamma$, any $k\in\mathbb{N}$ and $j=\{0,...,N_{k}(\omega)-1\}$, by the previous construction, we have $2^{m_k}\leq \hat{n}_k(\theta^{T_j^k(\omega)}\omega)\leq \hat{n}_k^M$ and $N_k(\omega)\geq 2^{N_1(\omega)(\hat{n}_1^M+m_1)+\cdots+ N_{k-1}(\omega)(\hat{n}_{k-1}^M+m_{k-1})}$. Note also that \eqref{mk tends to infinity} holds. Therefore, for any $\omega\in\widehat{\Omega}_\gamma$, there exists some $k_1(\omega)\in\mathbb{N},$ such that for any $k\geq k_1(\omega)$ and any $i\in\{0,1,...,N_{k+1}(\omega)-1\}$, one has
	\begin{equation}\label{estimation h minus 5gamma and h minus 5.5gamma}
		\begin{split}
			&\quad\exp\left[(h_{\mu_0}(F)-5\gamma)\Big(\sum_{j=0}^{N_1(\omega)-1}\hat{n}_1(\theta^{T_j^1(\omega)}\omega)+\cdots+\sum_{j=0}^{N_k(\omega)-1}\hat{n}_k(\theta^{T_j^k(\omega)}\omega)+\sum_{j=0}^{i}\hat{n}_{k+1}(\theta^{T_j^{k+1}(\omega)}\omega)\Big)\right]\\
			&\geq \exp\left[\Big(h_{\mu_0}(F)-\frac{11\gamma}{2}\Big)\Big(\sum_{j=0}^{N_1(\omega)-1}(\hat{n}_1(\theta^{T_j^1(\omega)}\omega)+m_1)+\cdots+\sum_{j=0}^{N_k(\omega)-1}(\hat{n}_k(\theta^{T_j^k(\omega)}\omega)+m_k)\right.\\
			&\left.\quad+\sum_{j=0}^{i}(\hat{n}_{k+1}(\theta^{T_j^{k+1}(\omega)}\omega)+m_{k+1})\Big)\right].
		\end{split}
	\end{equation}
	Note also that by (\ref{quotient term 1}), we have $T_{N_{k-1}(\omega)}^{k-1}(\omega)/T_{N_{k}(\omega)}^{k}(\omega)\to 0$ as $k\to\infty,$ hence there exists some $k_2(\omega)>k_1(\omega)$, such that for any $k\geq k_2(\omega)$, we have
	\begin{equation}\label{estimation quotient is less than} \frac{T_{N_{k-1}(\omega)}^{k-1}(\omega)}{T_{N_k(\omega)}^k(\omega)}+\xi_k+\xi_{k+1}+\frac{\hat{n}_{k+1}^M+m_{k+1}}{N_k(\omega)}<\frac{{\gamma}/{2}}{(h_{\mu_0}(F)-5\gamma)-\gamma/2}.
	\end{equation}
	Define $N(\omega)=T_{N_{k_2(\omega)}(\omega)}^{k_2(\omega)}(\omega)+1$.
	
	Now, we start proving lemma \ref{lemma entropy distribution} for $n\geq N(\omega)$. Note that $n\geq N(\omega),$ then there exists some $k\geq k_2(\omega)$ such that	$T_{N_k(\omega)}^k(\omega)<n\leq T_{N_{k+1}(\omega)}^{k+1}(\omega).$	For any open set $B_n(\omega,x,{\eta}/{2^4})$, by the weak$^*$ convergence of measure, we have
	\begin{displaymath}
		\begin{split} \mu_{\omega}\left[B_n\left(\omega,x,\frac{\eta}{2^4}\right)\right]\leq & \liminf_{p\to\infty}\mu_{k+p,\omega}\left[B_n\left(\omega,x,\frac{\eta}{2^4}\right)\right].
		\end{split}
	\end{displaymath}
	Next, we wish to estimate
	\begin{equation}\label{mu k+p omega Bn}
		\mu_{k+p,\omega}\left[B_n\left(\omega,x,\frac{\eta}{2^4}\right)\right]=\frac{1}{\# H_{k+p}(\omega)}\cdot \#\Big\{z\in H_{k+p}(\omega):z\in B_n\Big(\omega,x,\frac{\eta}{2^4}\Big)\Big\}.
	\end{equation}
    Our proof is divided into $3$ cases corresponding to the value of $n$.
	\begin{enumerate}
		\item [Case 1.]$T_{N_k(\omega)}^k(\omega)<n\leq T_{N_k(\omega)}^k+l^{k,k+1}(\omega)+m_{k+1}=T_{0}^{k+1}(\omega);$
		\item[Case 2.] There exists $j\in \{0,\dots,N_{k+1}(\omega)-1\},$ such that $T_{j}^{k+1}(\omega)<n\leq T_j^{k+1}(\omega)+\hat{n}_{k+1}(\theta^{T_j^{k+1}(\omega)}\omega);$
		\item[Case 3.] There exists   $j\in\{0,\dots,N_{k+1}(\omega)-1\},$ such that $$T_j^{k+1}(\omega)+\hat{n}_{k+1}(\theta^{T_j^{k+1}(\omega)}\omega)<n\leq T_{j+1}^{k+1}(\omega)=T_j^{k+1}+\hat{n}_{k+1}(\theta^{T_j^{k+1}(\omega)}\omega)+m_{k+1}+l^{k+1}_{j+1}(\omega).$$
	\end{enumerate}
	
	\emph{Proof of Case 1.} We divide the proof of case $1$ into $3$ steps.

	Firstly, we show $\#\{z\in H_k(\omega):z\in B_n(\omega,x,{\eta}/{2^4})\}\leq 1$. In fact, if there are $z_1\neq z_2\in H_k(\omega),$ such that $z_1,z_2\in B_n(\omega,x,{\eta}/{2^4}),$ then
	\begin{equation*}
		d_\omega^n(z_1,z_2)<\frac{\eta}{2^4}\times 2=\frac{\eta}{2^3}.
	\end{equation*}
	However, we have $d_{\omega}^n(z_1,z_2)\geq d_{\omega}^{T_{N_k(\omega)}^k(\omega)}(z_1,z_2)> {3\eta}/{8}$ by \eqref{Hk are 3eta/8 separated}, which leads to a contradiction.
	
	Secondly, we show for any $p\geq 1 $,
	\begin{equation}\label{case 1 step 2}
		\#\left\{z\in H_{k+p}(\omega):z\in B_n(\omega,x,\frac{\eta}{2^4})\right\}\leq \# D_{k+1}(\theta^{T_0^{k+1}(\omega)}\omega)\cdots\# D_{k+p}(\theta^{T_0^{k+p}(\omega)}\omega).
	\end{equation}
	If there are different points $z_1, z_2\in H_{k+p}(\omega)\cap B_n(\omega,x,{\eta}/{2^4})$ such that $z_1$ descends from $x_1\in H_k(\omega)$ and $z_2$ descends from $x_2\in H_k(\omega)$  defined as \eqref{def of descend series}, then we claim that $x_1=x_2$. In fact, if $x_1\neq x_2,$ by \eqref{Hk are 3eta/8 separated}, we have $d_{\omega}^{T_{N_k(\omega)}^k(\omega)}(x_1,x_2)>{3\eta}/{8}.$ However, we also have
	\begin{align*}
		d_{\omega}^{T_{N_k(\omega)}^k(\omega)}(x_1,x_2)&\leq d_{\omega}^{T_{N_k(\omega)}^k(\omega)}(x_1,z_1)+d_{\omega}^{T_{N_k(\omega)}^k(\omega)}(z_1,x)+d_{\omega}^{T_{N_k(\omega)}^k(\omega)}(x,z_2)+d_{\omega}^{T_{N_k(\omega)}^k(\omega)}(z_2,x_2)\\
		&\overset{\eqref{distance descend}}< \frac{\eta}{2^{4+k}}+\frac{\eta}{2^{4}}+\frac{\eta}{2^{4}}+\frac{\eta}{2^{4+k}}\\
		&\leq \frac{\eta}{4},
	\end{align*}which leads to a contradiction.
	As a consequence of \eqref{mu k+p omega Bn} and \eqref{case 1 step 2}, we have
	\begin{displaymath} \mu_{k+p,\omega}\left[B_n\left(\omega,x,\frac{\eta}{2^4}\right)\right]\leq \frac{\# D_{k+1}(\theta^{T_0^{k+1}}\omega)\cdots\# D_{k+p}(\theta^{T_0^{k+p}}\omega)}{\# H_{k+p}(\omega)}=\frac{1}{\# H_k(\omega)}.
	\end{displaymath}
	
	Thirdly, we claim that $\# H_k(\omega)\geq \exp[n(h_{\mu_0}(F)-6\gamma)]$. In fact, we have
	\begin{align*}
		\# H_k(\omega)&=\# D_1(\omega)\cdot \# D_2(\theta^{T_0^2(\omega)}\omega)\cdots\# D_k(\theta^{T_0^k(\omega)}\omega)=\prod_{i=1}^{k}\prod_{j=0}^{N_i(\omega)-1}\# C_i(\theta^{T_j^i(\omega)}\omega)\\
		&= \prod_{i=1}^{k}\prod_{j=0}^{N_i-1}M\left(\alpha_{\rho(i)},4\delta_i,\hat{n}_i(\theta^{T_j^i(\omega)}\omega),\frac{\eta}{2},\theta^{T_j^i(\omega)}\omega\right)\\&\overset{\eqref{estimation for odd even k}}\geq \prod_{i=1}^{k}\prod_{j=0}^{N_i-1}\exp [\hat{n}_i(\theta^{T_j^i(\omega)}\omega)(h_{\mu_0}(F)-5\gamma)]\\
		&\overset{\eqref{estimation h minus 5gamma and h minus 5.5gamma}}\geq \exp\left[\left(h_{\mu_0}(F)-\frac{11\gamma}{2}\right)\Big(\sum_{j=0}^{N_1-1}(\hat{n}_1(\theta^{T_j^1(\omega)}\omega)+m_1)+\cdots+\sum_{j=0}^{N_k-1}(\hat{n}_k(\theta^{T_j^k(\omega)}\omega)+m_k)\Big)\right].
	\end{align*}
Notice the following fact
	\begin{align*}
		&\quad  n-\Big(\sum_{j=0}^{N_1(\omega)-1}(\hat{n}_1(\theta^{T_j^1(\omega)}\omega)+m_1)+\cdots+\sum_{j=0}^{N_k(\omega)-1}(\hat{n}_k(\theta^{T_j^k(\omega)}\omega)+m_k)\Big)\\
		&\leq  T_{N_{k-1}(\omega)}^{k-1}(\omega)+\xi_kT_{N_k(\omega)}^k(\omega)+m_{k+1}+\xi_{k+1}n,
	\end{align*}and therefore,
	\begin{align*}
		&\quad\frac{n-\left(\sum_{j=0}^{N_1(\omega)-1}(\hat{n}_1(\theta^{T_j^1(\omega)}\omega)+m_1)+\cdots+\sum_{j=0}^{N_k(\omega)-1}(\hat{n}_k(\theta^{T_j^k(\omega)}\omega)+m_k)\right)}{n}\\
		&\leq \frac{T_{N_{k-1}(\omega)}^{k-1}(\omega)+\xi_kT_{N_k(\omega)}^k(\omega)+m_{k+1}+\xi_{k+1}n}{n}\\
		&\leq  \frac{T_{N_{k-1}(\omega)}^{k-1}(\omega)}{T_{N_{k}(\omega)}^{k}(\omega)}+\xi_k+\xi_{k+1}+\frac{m_{k+1}}{N_k(\omega)}\\&
		\overset{\eqref{estimation quotient is less than}}{\leq} \frac{\gamma/2}{h_{\mu_0}(F)-{11}/{2}\gamma}.
	\end{align*}So we have
	\begin{equation*}
		\Big(h_{\mu_0}(F)-\frac{11\gamma}{2}\Big)\Big(\sum_{j=0}^{N_1-1}(\hat{n}_1(\theta^{T_j^1(\omega)}\omega)+m_1)+\cdots+\sum_{j=0}^{N_k-1}(\hat{n}_k(\theta^{T_j^k(\omega)}\omega)+m_k)\Big)\geq n(h_{\mu_0}(F)-6\gamma).
	\end{equation*}As a consequence, we obtain $\#H_k(\omega)\geq \exp[n(h_{\mu_0}(F)-6\gamma)]$.
	Hence, in case 1, we have
	\begin{equation*}
		\mu_{k+p,\omega}\left[B_n\left(\omega,x,\frac{\eta}{2^4}\right)\right]\leq \exp[-n(h_{\mu_0}(F)-6\gamma)].
	\end{equation*}

	\emph{Proof of Case 2.} We divide the proof of case $2$ into $4$ steps.
	
	Firstly, we show that $\#\{z\in H_k(\omega):z\in B_n(\omega,x,{\eta}/{2^3})\}\leq 1.$ If there are $z_1\neq z_2\in H_k(\omega)\cap B_n(\omega,x,{\eta}/{2^3}),$ then
	\begin{displaymath}
		d_\omega^n(z_1,z_2)<2\times \frac{\eta}{2^3}=\frac{\eta}{4}.
	\end{displaymath}
	But by \eqref{Hk are 3eta/8 separated}, we have $d_\omega^n(z_1,z_2)\geq d_{\omega}^{T_{N_k(\omega)}^k(\omega)}(z_1,z_2)> {3\eta}/{8},$ which leads to a contradiction.
	
	Secondly, we prove that $$\#\{z\in H_{k+1}(\omega):z\in B_n(\omega,x,{\eta}/{2^3})\}\leq \prod_{i=j}^{N_{k+1}(\omega)-1} \# C_{k+1}(\theta^{T_i^{k+1}(\omega)}\omega).$$ If there exist $z_1\neq z_2\in H_{k+1}(\omega)\cap B_n(\omega,x,{\eta}/{2^3}),$ with
	\begin{displaymath}
		\begin{split}
			z_1&=z(x_1,y_1), x_1\in H_k(\omega),y_1\in D_{k+1}(\theta^{T_0^{k+1}(\omega)}\omega), y_1=y\big(a_0^{k+1},\dots,a_{N_{k+1}(\omega)-1}^{k+1}\big),\\	
			z_2&=z(x_2,y_2), x_2\in H_k(\omega),y_2\in D_{k+1}(\theta^{T_0^{k+1}(\omega)}\omega), y_2=y\big(b_0^{k+1},\dots,b_{N_{k+1}(\omega)-1}^{k+1}\big),
		\end{split}
	\end{displaymath}
	where $a_i^{k+1},b_i^{k+1}\in C_{k+1}(\theta^{T_i^{k+1}(\omega)}\omega)$ for $i\in\{0,\dots,N_{k+1}(\omega)-1\}.$ We claim that $x_1=x_2$ and $a_i^{k+1}=b_i^{k+1}$ for $i\in \{0,1,\dots,j-1\}.$
	In fact, if $x_1\neq x_2\in H_k(\omega)$, by \eqref{Hk are 3eta/8 separated}, we have $d_{\omega}^{T_{N_k(\omega)}^k(\omega)}(x_1,x_2)>{3\eta}/{8},$. But
	\begin{displaymath}
		\begin{split}
			d_{\omega}^{T_{N_k(\omega)}^k(\omega)}(x_1,x_2)&\leq d_{\omega}^{T_{N_k(\omega)}^k(\omega)}(x_1,z_1)+d_{\omega}^{T_{N_k(\omega)}^k(\omega)}(z_1,x)+d_{\omega}^{T_{N_k(\omega)}^k(\omega)}(x,z_2)+d_{\omega}^{T_{N_k(\omega)}^k(\omega)}(z_2,x_2)\\
			&\overset{\eqref{z shadow xy}}\leq \frac{\eta}{2^{4+k+1}}+\frac{\eta}{2^3}+\frac{\eta}{2^3}+\frac{\eta}{2^{4+k+1}}\\&\leq \frac{5\eta}{16},
		\end{split}
	\end{displaymath}
	which leads to a contradiction. Hence $x_1=x_2.$ Next, we prove that $a_i^{k+1}=b_i^{k+1}$ for $i\in\{0,1,\dots,j-1\}$. If $j=0,$ there is nothing to prove. Suppose $j\geq 1$ and there exists $i$ with $0\leq i\leq j-1$ such that $a_{i}^{k+1}\neq b_i^{k+1}.$ On the one hand, by \eqref{eq y shadowing xk+1} and \eqref{z shadow xy}, one has
	\begin{displaymath}
		\begin{split}
			&\quad d_{\theta^{T_i^{k+1}(\omega)}\omega}^{\hat{n}_{k+1}(\theta^{T_i^{k+1}(\omega)}\omega)}\Big(a_i^{k+1},b_i^{k+1}\Big)\\
			&\leq d_{\theta^{T_i^{k+1}(\omega)}\omega}^{\hat{n}_{k+1}(\theta^{T_i^{k+1}(\omega)}\omega)}\Big(a_i^{k+1},F_{\theta^{T_0^{k+1}(\omega)}\omega}^{T_i^{k+1}(\omega)-T_0^{k+1}(\omega)}y_1\Big)+d_{\theta^{T_i^{k+1}(\omega)}\omega}^{\hat{n}_{k+1}(\theta^{T_i^{k+1}(\omega)}\omega)}\Big(F_{\theta^{T_0^{k+1}(\omega)}\omega}^{T_i^{k+1}(\omega)-T_0^{k+1}(\omega)}y_1,F_{\omega}^{T_i^{k+1}(\omega)}z_1\Big)\\
			&\quad+  d_{\theta^{T_i^{k+1}(\omega)}\omega}^{\hat{n}_{k+1}(\theta^{T_i^{k+1}(\omega)}\omega)}\Big(F_{\omega}^{T_i^{k+1}(\omega)}z_1,F_{\omega}^{T_i^{k+1}(\omega)}z_2\Big)+  d_{\theta^{T_i^{k+1}(\omega)}\omega}^{\hat{n}_{k+1}(\theta^{T_i^{k+1}(\omega)}\omega)}\Big(F_{\omega}^{T_i^{k+1}(\omega)}z_2,F_{\theta^{T_0^{k+1}(\omega)}\omega}^{T_i^{k+1}(\omega)-T_0^{k+1}(\omega)}y_2\Big)   \\&\quad+   d_{\theta^{T_i^{k+1}(\omega)}\omega}^{\hat{n}_{k+1}(\theta^{T_i^{k+1}(\omega)}\omega)}\Big(F_{\theta^{T_0^{k+1}(\omega)}\omega}^{T_i^{k+1}(\omega)-T_0^{k+1}(\omega)}y_2,b_i^{k+1}\Big)\\
			&\leq \frac{\eta}{2^{4+k+1}}+\frac{\eta}{2^{4+k+1}}+\frac{\eta}{2^3}\times 2+\frac{\eta}{2^{4+k+1}}+\frac{\eta}{2^{4+k+1}}\leq \frac{\eta}{2^2}+\frac{\eta}{2^{3+k}}<\frac{3\eta}{8}.
		\end{split}
	\end{displaymath}
	On the other hand, $a_i^{k+1}\neq b_i^{k+1}\in C_{k+1}(\theta^{T_i^{k+1}}\omega)$ are $(\theta^{T_i^{k+1}}\omega,{\eta}/{2},\hat{n}_{k+1}(\theta^{T_i^{k+1}}\omega))$-separated, which leads to a contradiction. Hence, there are at most $\prod_{i=j}^{N_{k+1}(\omega)-1} \# C_{k+1}(\theta^{T_i^{k+1}(\omega)}\omega)$ points lying in $H_{k+1}(\omega)\cap B_n(\omega,x,{\eta}/{2^3}).$
	
	Thirdly, we show for any $p\geq 1$,
	\begin{equation*}
		\# \left\{z\in H_{k+p}(\omega):z\in B_n(\omega,x,\frac{\eta}{2^4})\right\}\leq \left( \prod_{i=j}^{N_{k+1}(\omega)-1}\# C_{k+1}(\theta^{T_i^{k+1}(\omega)}\omega)\right)\cdot \left(\prod_{i=2}^{p}\# D_{k+i}(\theta^{T_0^{k+i}(\omega)}\omega)\right).
	\end{equation*} We prove it by showing that $z\in H_{k+p}(\omega)\cap B_n(\omega,x,{\eta}/{2^4})$ must descend from the points of $H_{k+1}(\omega)\cap B_n(\omega,x,{\eta}/{2^3}).$ Suppose that we have $z_1\in H_{k+1}(\omega)$ and $z_p\in H_{k+p}(\omega)\cap B_{n}(\omega,x,{\eta}/{2^4}),$ where $z_p$ descends from $z_1$.
	Denote $z_p=z(z_{p-1},y_p)$ for $z_{p-1}\in H_{k+p-1}(\omega),y_p\in D_{k+p}(\theta^{T_0^{k+p}}\omega),\dots,z_2=z(z_1,y_2)$ for $y_2\in D_{k+2}(\theta^{T_0^{k+2}}\omega).$ Then by \eqref{z shadow xy}, one has
	\begin{displaymath}
		\begin{split}
			d_{\omega}^n(z_1,z_p)&\leq d_{\omega}^{T_{N_{k+1}(\omega)}^{k+1}(\omega)}(z_1,z_2)+d_{\omega}^{T_{N_{k+2}(\omega)}^{k+2}(\omega)}(z_2,z_3)+\cdots+ d_{\omega}^{T_{N_{k+p-1}(\omega)}^{k+p-1}(\omega)}(z_{p-1},z_p)\\
			&\leq \frac{\eta}{2^{4+k+2}}+\frac{\eta}{2^{4+k+3}}+\cdots+\frac{\eta}{2^{k+p}}\\
			&\leq\frac{\eta}{2^{4+k+1}}.
		\end{split}
	\end{displaymath}
	Hence $d_{\omega}^n(x,z_1)\leq d_{\omega}^n(x,z_p)+d_{\omega}^n(z_1,z_p)<{\eta}/{2^4}+{\eta}/{2^{4+k+1}}<{\eta}/{2^3},$ which implies that $z_1\in B_n(\omega,x,{\eta}/{2^3}).$ Therefore
	\begin{displaymath}
		\begin{split}
			&\quad	\# \Big\{z\in H_{k+p}(\omega):z\in B_n\Big(\omega,x,\frac{\eta}{2^4}\Big)\Big\}\\
			&\leq \# H_{k+1}(\omega)\cap B_n\left(\omega,x,\frac{\eta}{2^3}\right)\# D_{k+2}(\theta^{T_0^{k+2}(\omega)}\omega)\cdots\# D_{k+p}(\theta^{T_0^{k+p}(\omega)}\omega)\\
			&\leq\left(\prod_{i=j}^{N_{k+1}(\omega)-1}\# C_{k+1}(\theta^{T_j^{k+1}(\omega)}\omega)\right)\cdot\prod_{i=2}^p D_{k+i}(\theta^{T_0^{k+i}(\omega)}\omega),
		\end{split}
	\end{displaymath}
	It follows by \eqref{number Dk+1} and \eqref{number Hk} that
	\begin{equation}\label{estimation muk+p is less than fraction}
		\begin{split}
			\mu_{k+p,\omega}\left[B_n\left(\omega,x,\frac{\eta}{2^4}\right]\right)&\leq\frac{\left(\prod_{i=j}^{N_{k+1}(\omega)-1}\# C_{k+1}(\theta^{T_i^{k+1}(\omega)}\omega)\right)\cdot\left(\prod_{i=2}^{p}D_{k+i}(\theta^{T_0^{k+i}(\omega)}\omega)\right)}{\# H_{k+p}(\omega)}\\&=\frac{1}{\# H_k(\omega)\prod_{i=0}^{j-1}\# C_{k+1}(\theta^{T_i^{k+1}(\omega)}\omega)},
		\end{split}
	\end{equation}with the convention that $\prod_{i=0}^{-1}=1.$
	
	Fourthly, we show
	\begin{equation}\label{estimate number hkck1}
		\# H_k(\omega)\prod_{i=0}^{j-1}\# C_{k+1}(\theta^{T_i^{k+1}}\omega)\geq \exp[n(h_{\mu_0}(F)-6\gamma)].
	\end{equation}  In fact, we have
    \begin{small}
    		\begin{align*}
    		&\quad\# H_k(\omega)\cdot\prod_{i=0}^{j-1} C_{k+1}(\theta^{T_i^{k+1}}\omega)=\left(\prod_{t=1}^{k}\prod_{i=0}^{N_t(\omega)-1}\# C_t(\theta^{T_i^t(\omega)}\omega)\right)\cdot \left(\prod_{i=0}^{j-1} \# C_{k+1}(\theta^{T_i^{k+1}(\omega)}\omega)\right)\nonumber\\
    		&=\prod_{t=1}^{k}\prod_{i=0}^{N_t(\omega)-1}M\left(\alpha_{\rho(t)},4\delta_t,\hat{n}_t(\theta^{T_i^t(\omega)}\omega),\frac{\eta}{2},\theta^{T_i^t(\omega)}\omega\right)\cdot \prod_{i=0}^{j-1}M\left(\alpha_{\rho(k+1)},4\delta_{k+1},\hat{n}_{k+1}(\theta^{T_i^{k+1}(\omega)}\omega),\frac{\eta}{2},\theta^{T_i^{k+1}(\omega)}\omega\right) \nonumber\\
    		&\overset{\eqref{estimation for odd even k}}{\geq}\exp\left[(h_{\mu_0}(F)-5\gamma)\Big(\sum_{i=0}^{N_1-1}\hat{n}_1(\theta^{T_i^1(\omega)}\omega)+\cdots\right.\\&\left.	\quad\quad\quad\quad\quad\quad\quad\quad\quad\quad\quad\quad\quad\quad+\sum_{i=0}^{N_k-1}\hat{n}_k(\theta^{T_i^k(\omega)}\omega)+\sum_{i=0}^{j-1}\hat{n}_{k+1}(\theta^{T_i^{k+1}(\omega)}\omega)\Big)\right]\nonumber\\
    		&\overset{\eqref{estimation h minus 5gamma and h minus 5.5gamma}}\geq \exp\left[\Big(h_{\mu_0}(F)-\frac{11\gamma}{2}\Big)\Big(\sum_{i=0}^{N_1(\omega)-1}(\hat{n}_1(\theta^{T_i^1(\omega)}\omega)+m_1)+\cdots+\sum_{i=0}^{N_k(\omega)-1}(\hat{n}_k(\theta^{T_i^k(\omega)}\omega)+m_k)\right.\nonumber\\&\left.
    		\quad\quad\quad\quad\quad\quad\quad\quad\quad\quad\quad\quad\quad\quad+\sum_{i=0}^{j-1}(\hat{n}_{k+1}(\theta^{T_i^{k+1}(\omega)}\omega)+m_{k+1})\Big)\right].
    	\end{align*}
    \end{small}
Notice the following fact
	\begin{align*}
		&\quad n-\left(\sum_{i=0}^{N_1(\omega)-1}(\hat{n}_1(\theta^{T_i^1(\omega)}\omega)+m_1)+\cdots+\sum_{i=0}^{N_k(\omega)-1}(\hat{n}_k(\theta^{T_i^k(\omega)}\omega)+m_k)\right.\\
		&\quad\quad\left.+\sum_{i=0}^{j-1}(\hat{n}_{k+1}(\theta^{T_i^{k+1}(\omega)}\omega)+m_{k+1})\right)\\
		 &\leq T_{N_{k-1}(\omega)}^{k-1}(\omega)+l^{k-1,k}(\omega)+\sum_{i=1}^{N_k(\omega)-1}l_i^k(\omega)+l^{k,k+1}(\omega)+\sum_{i=1}^{j}l_i^{k+1}(\omega)+n-T_{j}^{k+1}(\omega)\nonumber\\
		&\leq T_{N_{k-1}(\omega)}^{k-1}(\omega)+\xi_kn+\xi_{k+1}n+\hat{n}_{k+1}^M
		\leq \left(\frac{T_{N_{k-1}(\omega)}^{k-1}(\omega)}{T_{N_{k}(\omega)}^{k}(\omega)}+\xi_k+\xi_{k+1}+\frac{\hat{n}_{k+1}^M}{N_k(\omega)}\right)n\\&
		\overset{\eqref{estimation quotient is less than}}{\leq}  \frac{\gamma/2}{h_{\mu_0}(F)-{11\gamma}/{2}}\cdot n,
	\end{align*}
	with the convention that $\sum_{i=1}^{0}=0$. As a consequence, we have
    	\begin{align*}
			&\quad \Big(h_{\mu_0}(F)-\frac{11\gamma}{2}\Big)\Big(\sum_{i=0}^{N_1(\omega)-1}(\hat{n}_1(\theta^{T_i^1(\omega)}\omega)+m_1)+\cdots+\sum_{i=0}^{N_k(\omega)-1}(\hat{n}_k(\theta^{T_i^k(\omega)}\omega)+m_k)\nonumber\\&\quad\quad\quad\quad\quad\quad\quad\quad\quad\quad\quad\quad\quad+
			\sum_{i=0}^{j-1}(\hat{n}_{k+1}(\theta^{T_i^{k+1}(\omega)}\omega)+m_{k+1})\Big)\geq n(h_{\mu_0}(F)-6\gamma)\nonumber.
		\end{align*}
 Therefore, in case 2, by \eqref{estimation muk+p is less than fraction} and \eqref{estimate number hkck1}, we have

	\begin{displaymath} \mu_{k+p,\omega}\left[B_n\left(\omega,x,\frac{\eta}{2^4}\right)\right]\leq \exp[-n(h_{\mu_0}(F)-6\gamma)].
	\end{displaymath}

	\emph{Proof of Case 3.}
	We also divide our proof into $4$ steps.
	
	Firstly, exactly the same as the proof of step 1 in case 2, we can prove that $\#\{z\in H_k(\omega):z\in B_n(\omega,x,{\eta}/{2^3})\}\leq 1.$
	
	Secondly, we show that $\#\{z\in H_{k+1}(\omega):z\in B_n(\omega,x,{\eta}/{2^3})\}\leq \prod_{i=j+1}^{N_{k+1}(\omega)-1} \# C_{k+1}(\theta^{T_i^{k+1}(\omega)}\omega)$ with the convention that $\prod_{i=N_{k+1}(\omega)}^{N_{k+1}(\omega)-1}=1$. If there are $z_1\neq z_2\in H_{k+1}(\omega)\cap B_n(\omega,x,{\eta}/{2^3}),$ with
	\begin{displaymath}
		\begin{split}
			z_1&=z(x_1,y_1), x_1\in H_k(\omega),y_1\in D_{k+1}(\theta^{T_0^{k+1}(\omega)}\omega), y_1=y\big(a_0^{k+1},\dots,a_{N_{{k+1}}(\omega)-1}^{k+1}\big),\\	
			z_2&=z(x_2,y_2), x_2\in H_k(\omega),y_2\in D_{k+1}(\theta^{T_0^{k+1}(\omega)}\omega), y_2=y\big(b_0^{k+1},\dots,b_{N_{{k+1}}(\omega)-1}^{k+1}\big),
		\end{split}
	\end{displaymath}
	where $a_i^{k+1},b_i^{k+1}\in C_{k+1}(\theta^{T_i^{k+1}(\omega)}\omega)$ for $i\in\{0,\dots,N_{k+1}(\omega)-1\}.$ We claim that $x_1=x_2$ and $a_i^{k+1}=b_i^{k+1}$ for $i\in \{0,1,\dots,j\}.$ In fact, the same proof as in step 2 of case 2 indicates that $x_1=x_2$ and $a_i^{k+1}=b_i^{k+1}$ for $i\in\{0,1,\dots,j-1\}$. It remains to show that $a_j^{k+1}=b_j^{k+1}.$ If $a_j^{k+1}\neq b_j^{k+1}$, on the one hand, by \eqref{eq y shadowing xk+1} and \eqref{z shadow xy}, one has
	\begin{displaymath}
		\begin{split}
			&\quad d_{\theta^{T_j^{k+1}(\omega)}\omega}^{\hat{n}_{k+1}(\theta^{T_j^{k+1}(\omega)}\omega)}\Big(a_j^{k+1},b_j^{k+1}\Big)\\
			&\leq d_{\theta^{T_j^{k+1}(\omega)}\omega}^{\hat{n}_{k+1}(\theta^{T_j^{k+1}(\omega)}\omega)}\Big(a_j^{k+1},F_{\theta^{T_0^{k+1}(\omega)}\omega}^{T_j^{k+1}(\omega)-T_0^{k+1}(\omega)}y_1\Big)+d_{\theta^{T_j^{k+1}(\omega)}\omega}^{\hat{n}_{k+1}(\theta^{T_j^{k+1}(\omega)}\omega)}\Big(F_{\theta^{T_0^{k+1}(\omega)}\omega}^{T_j^{k+1}(\omega)-T_0^{k+1}(\omega)}y_1,F_{\omega}^{T_j^{k+1}(\omega)}z_1\Big) \\ &\quad+  d_{\theta^{T_j^{k+1}(\omega)}\omega}^{\hat{n}_{k+1}(\theta^{T_j^{k+1}(\omega)}\omega)}\Big(F_{\omega}^{T_j^{k+1}(\omega)}z_1,F_{\omega}^{T_j^{k+1}(\omega)}z_2\Big) +  d_{\theta^{T_j^{k+1}(\omega)}\omega}^{\hat{n}_{k+1}(\theta^{T_j^{k+1}(\omega)}\omega)}\Big(F_{\omega}^{T_j^{k+1}(\omega)}z_2,F_{\theta^{T_0^{k+1}}\omega}^{T_j^{k+1}(\omega)-T_0^{k+1}(\omega)}y_2\Big)\\   &\quad+   d_{\theta^{T_i^{k+1}(\omega)}\omega}^{\hat{n}_{k+1}(\theta^{T_j^{k+1}(\omega)}\omega)}\Big(F_{\theta^{T_0^{k+1}(\omega)}\omega}^{T_j^{k+1}(\omega)-T_0^{k+1}(\omega)}y_2,b_j^{k+1}\Big)\\
			&\leq \frac{\eta}{2^{4+k+1}}+\frac{\eta}{2^{4+k+1}}+\frac{\eta}{2^3}\times 2+\frac{\eta}{2^{4+k+1}}+\frac{\eta}{2^{4+k+1}}<\frac{3\eta}{8}.
		\end{split}
	\end{displaymath}
	On the other hand, $a_j^{k+1}\neq b_j^{k+1}\in C_{k+1}(\theta^{T_j^{k+1}(\omega)}\omega)$ are $(\theta^{T_j^{k+1}(\omega)}\omega,{\eta}/{2},\hat{n}_{k+1}(\theta^{T_j^{k+1}(\omega)}\omega))$-separated, which leads to a contradiction.
	
	Thirdly, we show that for any $p\geq 2$,
	\begin{equation*}
		\# \left\{z\in H_{k+p}(\omega):z\in B_n(\omega,x,\frac{\eta}{2^4})\right\}\leq \left( \prod_{i=j+1}^{N_{k+1}(\omega)-1}\# C_{k+1}(\theta^{T_i^{k+1}(\omega)}\omega)\right)\cdot\left(\prod_{i=2}^{p}\# D_{k+i}(\theta^{T_0^{k+i}(\omega)}\omega)\right),
	\end{equation*}with the convention that $\prod_{i=N_{k+1}(\omega)}^{N_{k+1}(\omega)-1}=1$.
	Exactly same proof as in step $3$ of cases $2$ indicates that $z\in H_{{k+p}}(\omega)\cap B_n(\omega,x,{\eta}/{2^4})$ must descends from some point in $H_{k+1}(\omega)\cap B_n(\omega,x,{\eta}/{2^3}).$
	Therefore,
	\begin{displaymath}
		\begin{split}
			&\quad\# \Big\{z\in H_{k+p}(\omega):z\in B_n\Big(\omega,x,\frac{\eta}{2^4}\Big)\Big\}\\
			&\leq \# H_{k+1}(\omega)\cap B_n\left(\omega,x,\frac{\eta}{2^3}\right)\# D_{k+2}(\theta^{T_0^{k+2}(\omega)}\omega)\cdots\# D_{k+p}(\theta^{T_0^{k+p}(\omega)}\omega)\\
			&\leq\left(\prod_{i=j+1}^{N_{k+1}(\omega)-1}\# C_{k+1}(\theta^{T_j^{k+1}(\omega)}\omega)\right)\cdot\left(\prod_{i=2}^p D_{k+i}(\theta^{T_0^{k+i}(\omega)}\omega)\right).
		\end{split}
	\end{displaymath}
	It follows that
	\begin{equation}
		\begin{split}
			\mu_{k+p,\omega}\left[B_n\left(\omega,x,\frac{\eta}{2^4}\right)\right]&\leq\frac{\prod_{i=j+1}^{N_{k+1}(\omega)-1}\# C_{k+1}(\theta^{T_i^{k+1}(\omega)}\omega)\cdot\prod_{i=2}^{p}D_{k+i}(\theta^{T_0^{k+i}(\omega)}\omega)}{\# H_{k+p}(\omega)}\\&=\frac{1}{\# H_k(\omega)\prod_{i=0}^{j}\# C_{k+1}(\theta^{T_i^{k+1}(\omega)}\omega)}.
		\end{split}
	\end{equation}
	
	Fourthly, we show $\# H_k(\omega)\prod_{i=0}^{j}\# C_{k+1}(\theta^{T_i^{k+1}(\omega)}\omega)\geq \exp((h_{\mu_0}(F)-5\gamma)n).$
	Using (\ref{estimation h minus 5gamma and h minus 5.5gamma}), we have
	\begin{small}
		\begin{displaymath}
			\begin{split}
				&\quad\# H_k(\omega)\cdot\prod_{i=0}^{j} C_{k+1}(\theta^{T_i^{k+1}(\omega)}\omega)=\prod_{t=1}^{k}\prod_{i=0}^{N_t(\omega)-1}\# C_t(\theta^{T_i^t(\omega)}\omega)\cdot \prod_{i=0}^{j} \# C_{k+1}(\theta^{T_i^{k+1}(\omega)}\omega)\nonumber\\
				&=\prod_{t=1}^{k}\prod_{i=0}^{N_t(\omega)-1}M\left(\alpha_{\rho(t)},4\delta_t,\hat{n}_t(\theta^{T_i^t(\omega)}\omega),\frac{\eta}{2},\theta^{T_i^t(\omega)}\omega\right)\cdot \prod_{i=0}^{j}M(\alpha_{\rho(k+1)},4\delta_{k+1},\hat{n}_{k+1}(\theta^{T_i^{k+1}(\omega)}\omega),\frac{\eta}{2},\theta^{T_i^{k+1}(\omega)}\omega) \\
				&\geq\exp\left[(h_{\mu_0}(F)-5\gamma)\Big(\sum_{i=0}^{N_1(\omega)-1}\hat{n}_1(\theta^{T_i^1(\omega)}\omega)+\cdots+\sum_{i=0}^{N_k(\omega)-1}\hat{n}_k(\theta^{T_i^k(\omega)}\omega)+\sum_{i=0}^{j}\hat{n}_{k+1}(\theta^{T_i^{k+1}(\omega)}\omega)\Big)\right]\\&\geq \exp\left[\Big(h_{\mu_0}(F)-\frac{11\gamma}{2}\Big)\Big(\sum_{i=0}^{N_1(\omega)-1}(\hat{n}_1(\theta^{T_i^1(\omega)}\omega)+m_1)+\cdots+\sum_{i=0}^{N_k(\omega)-1}(\hat{n}_k(\theta^{T_i^k(\omega)}\omega)+m_k)\right.\\
				&\left.\quad+\sum_{i=0}^{j-1}(\hat{n}_{k+1}(\theta^{T_i^{k+1}(\omega)}\omega)+m_{k+1})+\hat{n}_{k+1}(\theta^{T_j^{k+1}(\omega)}\omega)\Big)\right].
			\end{split}
		\end{displaymath}
	\end{small}
	 We notice the following fact that in Case 3
	\begin{displaymath}
		\begin{split}
			&\quad n-\left(\sum_{i=0}^{N_1(\omega)-1}(\hat{n}_1(\theta^{T_i^1(\omega)}\omega)+m_1)+\cdots+\sum_{i=0}^{N_k(\omega)-1}(\hat{n}_k(\theta^{T_i^k(\omega)}\omega)+m_k)\right.
			\\ &\quad\quad\quad\quad\left.+\sum_{i=0}^{j-1}(\hat{n}_{k+1}(\theta^{T_i^{k+1}(\omega)}\omega)+m_{k+1})+\hat{n}_{k+1}(\theta^{T_j^{k+1}(\omega)}\omega)\right)\\
			 &\leq T_{N_{k-1}(\omega)}^{k-1}(\omega)+l^{k-1,k}(\omega)+\sum_{i=1}^{N_k(\omega)-1}l_i^k(\omega)+l^{k,k+1}(\omega)+\sum_{i=1}^{j}l_i^{k+1}(\omega)\\
			&\quad\quad+(n-T_{j}^{k+1}(\omega)- \hat{n}_{k+1}(\theta^{T_j^{k+1}(\omega)}\omega))\\
			 &\leq T_{N_{k-1}(\omega)}^{k-1}(\omega)+\xi_kn+\xi_{k+1}n+m_{k+1}\\
			&\leq \left(\frac{T_{N_{k-1}(\omega)}^{k-1}(\omega)}{T_{N_{k}(\omega)}^{k}(\omega)}+\xi_k+\xi_{k+1}+\frac{m_{k+1}}{N_{k}(\omega)}\right)\cdot n\\&
			\overset{\eqref{estimation quotient is less than}}{\leq} \frac{\gamma/2}{h_{\mu_0}(F)-{11\gamma}/{2}}\cdot n.
		\end{split}
	\end{displaymath} As a consequence,
	\begin{align*}
		&\ \Big(h_{\mu_0}(F)-\frac{11\gamma}{2}\Big)\Big(\sum_{i=0}^{N_1(\omega)-1}(\hat{n}_1(\theta^{T_i^1(\omega)}\omega)+m_1)+\cdots+\sum_{i=0}^{N_k(\omega)-1}(\hat{n}_k(\theta^{T_i^k(\omega)}\omega)+m_k)\\
		&\quad+\sum_{i=0}^{j-1}(\hat{n}_{k+1}(\theta^{T_i^{k+1}(\omega)}\omega)+m_{k+1})+\hat{n}_{k+1}(\theta^{T_j^{k+1}(\omega)}\omega)\Big)\geq n(h_{\mu_0}(F)-6\gamma).
	\end{align*}
	Therefore,
	\begin{displaymath}
	\mu_{k+p,\omega}\left[B_n\left(\omega,x,\frac{\eta}{2^4}\right)\right]\leq \exp[-n(h_{\mu_0}(F)-6\gamma)].
	\end{displaymath}
	In all three cases, we conclude that
	\begin{displaymath}
	\mu_{k+p,\omega}\left[B_n\left(\omega,x,\frac{\eta}{2^4}\right)\right]\leq \liminf_{p\to\infty}\mu_{k+p,\omega}\left[B_n\left(\omega,x,\frac{\eta}{2^4}\right)\right]\leq \exp[-n(h_{\mu_0}(F)-6\gamma)].
	\end{displaymath}
	The proof of lemma \ref{lemma entropy distribution} is complete.
\end{proof}
\begin{proof}[Proof of Lemma \ref{lemma contain and nonempty}]
	For any $z\in \mathcal{B}_{l_{k+1}}(\omega,y_j^{k+1},{\epsilon}/{2^{k+1}})$, we have
	\begin{displaymath}
		d_{\omega}^{l_{k+1}}\left(z,y_j^{k+1}\right)<\frac{\epsilon}{2^{k+1}}.
	\end{displaymath}
Therefore, by triangular inequality, one has
\begin{displaymath}
	d_{\omega}^{l_{k}}\left(z,y_j^k\right)\leq d_{\omega}^{l_{k+1}}\left(z,y_{j}^{k+1}\right)+d_{\omega}^{l_k}(y_{j}^{k+1},y_j^k)\overset{ \eqref{equation shadowing property yk+1 and yk}}<\frac{\epsilon}{2^k}.
\end{displaymath}
This ends the proof of \eqref{equation contain}.

Similarly, one can prove that
\begin{equation}\label{equation closure contain}
	\overline{\mathcal{B}}_{l_{k+1}}\left(\omega,y_j^{k+1},\frac{\epsilon}{2^{k+1}}\right)\subset\overline{\mathcal{B}}_{l_k}\left(\omega,y_j^{k},\frac{\epsilon}{2^k}\right).
\end{equation}
By the compactness of $M$, we conclude that
\begin{displaymath}
	\bigcap_{k\geq1}\overline{\mathcal{B}}_{l_k}\left(\omega,y_j^k,\frac{\epsilon}{2^{k}}\right)\neq\varnothing.
\end{displaymath} Pick some $z_0\in\cap_{k\geq1}\overline{\mathcal{B}}_{l_k}(\omega,y_j^k,\epsilon/2^k)$, we have $d_{\omega}^{l_{k+1}}(z_0,y_j^{k+1})\leq\epsilon/2^{k+1}$ for any $k\in\mathbb{N}$. By triangular inequality, one has
\begin{displaymath}
d_{\omega}^{l_k}\left(z_0,y_j^k\right)\leq d_{\omega}^{l_{k+1}}\left(z_0,y_{j}^{k+1}\right)+d_{\omega}^{l_k}\left(y_j^{k+1},y_j^k\right)<\frac{\epsilon}{2^k},\quad\forall k\in\mathbb{N}
\end{displaymath}
Therefore, $z_0\in \cap_{k\geq1}\mathcal{B}_{l_k}(\omega,y_j^k,\epsilon/2^k)$. This ends the proof of \eqref{equation nonempty}.

\end{proof}
\begin{proof}[Proof of Lemma \ref{lemma limit of difference exists}]
  It suffices to estimate
  \begin{displaymath}	\left|\sum_{i=0}^{n-1}\varphi(\Theta^i(\omega,z))-\sum_{i=0}^{n-1}\varphi(\Theta^i(\omega,x))\right|
  \end{displaymath}
for $z\in \mathcal{Z}_{\varphi}(\omega,x,x_j,\epsilon)$.
Our estimations are divided into two parts.

\emph{Estimation for $y_j^k$.}

We introduce
\begin{displaymath}
	L_{k}=\left|\sum_{i=0}^{l_k-1}\varphi(\Theta^i(\omega,y_j^{k}))-\sum_{i=0}^{l_k-1}\varphi(\Theta^i(\omega,x))\right|.
\end{displaymath}
Let us obtain an upper estimation of $L_k$ by induction.

For $k=1$, we have by \eqref{equation shadowing 1st level} and triangular inequality that
\begin{displaymath}
    \begin{split}
    	L_1&=\left|\sum_{i=0}^{s_1+n_1-1}\varphi(\Theta^i(\omega,y_j^1))-\sum_{i=0}^{s_1+n_1-1}\varphi(\Theta^i(\omega,x))\right|\\&\leq2s_1\|\varphi\|_{C^0}+n_1\mathrm{var}\left(\varphi,\frac{\epsilon}{2}\right).
    \end{split}
\end{displaymath}
By the definition of $y_j^{k+1}$ and utilizing \eqref{equation shadowing property yk+1 and yk}, one has
\begin{displaymath}
	\begin{split}
		L_{k+1}&=\left|\sum_{i=0}^{l_{k+1}-1}\varphi(\Theta^i(\omega,y_j^{k+1}))-\sum_{i=0}^{l_{k+1}-1}\varphi(\Theta^i(\omega,x))\right|\\&\leq \left|\sum_{i=0}^{l_{k}-1}\varphi(\Theta^i(\omega,y_j^{k+1}))-\sum_{i=0}^{l_k-1}\varphi(\Theta^i(\omega,y_j^k))\right|+\left|\sum_{i=0}^{l_k-1}\varphi(\Theta^i(\omega,y_j^k))-\sum_{i=0}^{l_k-1}\varphi(\Theta^i(\omega,x))\right|\\&\quad+\left|\sum_{i=l_k}^{l_{k+1}-1}\varphi(\Theta^i(\omega,y_j^{k+1}))-\sum_{i=l_k}^{l_{k+1}-1}\varphi(\Theta^i(\omega,x))\right|\\&\leq l_k\mathrm{var}\left(\varphi,\frac{\epsilon}{2^{k+1}}\right)+L_k+2s_{k+1}\|\varphi\|_{C^0}+n_{k+1}\mathrm{var}\left(\varphi,\frac{\epsilon}{2^{k+1}}\right).
	\end{split}
\end{displaymath}
By induction, we conclude that
\begin{equation}\label{equation upper estimation of Lk}
	L_{k}\leq\sum_{i=1}^{k}l_j\mathrm{var}\left(\varphi,\frac{\epsilon}{2^i}\right)+\sum_{i=1}^{k}2s_i\|\varphi\|_{C^0}.
\end{equation}

Let us analyze the expression of $L_k$. We claim that
\begin{equation}\label{equation behaviour of Lk}
	\lim_{k\to\infty}\frac{L_k}{l_k}=0.
\end{equation}
The following limit is obtained through utilizing Stolz's theorem
\begin{equation}\label{equation Lk over lk 1}
	0\leq \lim_{k\to\infty}\frac{\sum_{i=1}^ks_i}{l_k}=\lim_{k\to\infty}\frac{s_k}{l_k-l_{k-1}}\leq\lim_{k\to\infty}\frac{s_k}{n_k}=0,
\end{equation}
where the last equality is due to \eqref{lim sk over nk}. Besides, one also has
\begin{equation}\label{equation Lk over lk 2}
	0\leq\lim_{k\to\infty}\frac{\sum_{i=1}^{k-1}l_i\mathrm{var}\left(\varphi,{\epsilon}/{2^i}\right)}{l_{k}}=\lim_{k\to\infty}\frac{l_{k-1}\mathrm{var}\left(\varphi,\epsilon/2^{k-1}\right)}{l_k-l_{k-1}}\leq \lim_{k\to \infty}\frac{l_{k-1}}{n_k}\mathrm{var}\left(\varphi,\frac{\epsilon}{2^{k-1}}\right)=0.
\end{equation}
where the last equality is due to \eqref{lim lk over nk+1}.
Therefore, \eqref{equation behaviour of Lk} follows directly from \eqref{equation Lk over lk 1} and \eqref{equation Lk over lk 2}.

\emph{Estimation for $z$.} We divide our estimation for $z$ into two cases.

Case 1: $l_k<n\leq l_k+s_{k+1}.$ Since $z\in\mathcal{Z}_{\varphi}(\omega,x,x_j,\epsilon)$, we have $z\in \mathcal{B}_{l_k}(\omega,y_j^k,\epsilon/2^k)$, i.e.
\begin{equation}\label{equation distance yjk and z}
	d_{\omega}^{l_k}\left(y_j^k,z\right)<\frac{\epsilon}{2^k}.
\end{equation}
By utilizing triangular inequality and \eqref{equation distance yjk and z}
\begin{equation*}
	\begin{split}
		&\,\quad\left|\sum_{i=0}^{n-1}\varphi(\Theta^i(\omega,z))-\varphi(\Theta^i(\omega,x))\right|\\&\leq\left|\sum_{i=0}^{l_k-1}\varphi(\Theta^i(\omega,z))-\sum_{i=0}^{l_k-1}\varphi(\Theta^i(\omega,y_j^k))\right|+\left|\sum_{i=1}^{l_k-1}\varphi(\Theta^i(\omega,y_j^k))-\sum_{i=1}^{l_k-1}\varphi(\Theta^i(\omega,x))\right|\\&\quad+\left|\sum_{i=l_k}^{n-1}\varphi(\Theta^i(\omega,z))-\sum_{i=l_k}^{n-1}\varphi(\Theta^i(\omega,x))\right|\\&\leq l_k\mathrm{var}\left(\varphi,\frac{\epsilon}{2^k}\right)+ L_k+2s_{k+1}\|\varphi\|_{C^0}.
	\end{split}
\end{equation*}
Hence, we have
\begin{equation}\label{estimation case 1}
	\frac{1}{n}\left|\sum_{i=0}^{n-1}\varphi(\Theta^i(\omega,z))-\varphi(\Theta^i(\omega,x))\right|\leq \mathrm{var}\left(\varphi,\frac{\epsilon}{2^k}\right)+\frac{L_k}{l_k}+\frac{2s_{k+1}}{n_k}\|\varphi\|_{C^0}.
\end{equation}

Case 2: $l_k+s_{k+1}<n\leq l_{k+1}.$ Similar with case 1, one has
\begin{displaymath}
	\begin{split}
		&\,\quad\left|\sum_{i=0}^{n-1}\varphi(\Theta^i(\omega,z))-\sum_{i=0}^{n-1}\varphi(\Theta^i(\omega,x))\right|\\&\leq \left|\sum_{i=0}^{l_k-1}\varphi(\Theta^i(\omega,z))-\sum_{i=0}^{l_k-1}\varphi(\Theta^i(\omega,y_j^k))\right|+\left|\sum_{i=1}^{l_k-1}\varphi(\Theta^i(\omega,y_j^k))-\sum_{i=1}^{l_k-1}\varphi(\Theta^i(\omega,x))\right|\\&\quad+\left|\sum_{i=l_k}^{l_k+s_{k+1}-1}\varphi(\Theta^i(\omega,z))-\sum_{i=l_k}^{l_k+s_{k+1}-1}\varphi(\Theta^i(\omega,x))\right|+\left|\sum_{i=l_k+s_{k+1}}^{n-1}\varphi(\Theta^i(\omega,z))-\sum_{i=l_k+s_{k+1}}^{n-1}\varphi(\Theta^i(\omega,y_j^{k+1}))\right|\\&\quad+\left|\sum_{i=l_k+s_{k+1}}^{n-1}\varphi(\Theta^i(\omega,y_j^{k+1}))-\sum_{i=l_k+s_{k+1}}^{n-1}\varphi(\Theta^i(\omega,x))\right|\\&\leq l_k\mathrm{var}\left(\varphi,\frac{\epsilon}{2^k}\right)+L_k+2{s_{k+1}}\|\varphi\|_{C^0}+2[n-(l_k+s_{k+1})]\mathrm{var}\left(\varphi,\frac{\epsilon}{2^{k+1}}\right).
	\end{split}
\end{displaymath}
Hence, dividing both sides by $n$, we conclude that
\begin{equation}\label{estimation case 2}
	\begin{split}
		&\quad\frac{1}{n}\left|\sum_{i=0}^{n-1}\varphi(\Theta^i(\omega,z))-\sum_{i=0}^{n-1}\varphi(\Theta^i(\omega,x))\right|\\&\leq \mathrm{var}\left(\varphi,\frac{\epsilon}{2^k}\right)+\frac{L_k}{l_k}+\frac{2s_{k+1}}{n_k}\|\varphi\|_{C^0}+2\mathrm{var}\left(\varphi,\frac{\epsilon}{2^{k+1}}\right).
	\end{split}
\end{equation}

In a word, combining \eqref{estimation case 1} and \eqref{estimation case 2} together, and sending $n\to\infty$, we arrive at
\begin{displaymath}
	\lim_{n\to\infty}\left[\frac{1}{n}\sum_{i=0}^{n-1}\varphi(\Theta^i(\omega,z))-\sum_{i=0}^{n-1}\varphi(\Theta^i(\omega,x))\right]=0.
\end{displaymath}
This ends the proof of lemma \ref{lemma limit of difference exists}.
\end{proof}

\begin{proof}[Proof of Lemma \ref{lemma dense set is abundant}]
	For any $z\in \mathcal{Z}_{\varphi}(\omega,x,x_j,\epsilon)$, we claim that
	\begin{equation}
	  d_M(z,x_j)<\epsilon.
	\end{equation}
   Notice that $z\in \mathcal{Z}_{\varphi}(\omega,x,x_j,\epsilon)\subset\mathcal{B}_{l_1}(\omega,y_j^1,\epsilon/2)$.Therefore, we have $d_{\omega}^{l_1}(z,y_j^1)<\epsilon/2$. Hence, by the shadowing property \eqref{equation shadowing 1st level}, one has
    \begin{displaymath}
    	d_M(z,x_j)\leq d_{\omega}^{l_1}(z,y_j^1)+d_{M}(y_j^1,x_j)<\epsilon.
    \end{displaymath}

    Notice that $\{x_j\}_{j=1}^{\infty}$ is dense in $M$. For any nonempty open ball $\mathcal{B}(y,\delta)\subset M$, there exists some $j_0\in\mathbb{N}$ such that $x_{j_0}\in \mathcal{B}(y,\delta)$. We pick some sufficiently small $\epsilon_k>$ with $\mathcal{B}(x_{j_0},\epsilon_k)\subset \mathcal{B}(y,\delta)$. Therefore, we have
    \begin{displaymath}
    	\mathcal{Z}_{\varphi}(\omega,x,x_{j_0},\epsilon_k)\subset\mathcal{B}(x_{j_0},\epsilon_k)\subset \mathcal{B}(y,\delta),
    \end{displaymath}
   which implies that $\mathcal{Z}_{\varphi}(\omega,x)\cap \mathcal{B}(y,\delta)\neq\varnothing$. This ends the proof of lemma \ref{lemma dense set is abundant}.
\end{proof}
\begin{proof}[Proof of Corollary \ref{corollary dense}]
    It suffices to show that for any nonempty open set $U\subset \Omega$ and $V\subset M$, we have
    \begin{displaymath}
    	(U\times V)\cap I_{\varphi}\neq\varnothing.
    \end{displaymath}By theorem \ref{thm residual}, there exists a $\mathbb{P}$-full measure set $\overline{\Omega}$, such that $I_{\varphi}(\omega)$ is residual, hence dense in $M$ for any $\omega\in\overline{\Omega}.$ Since $U$ is assigned positive $\mathbb{P}$-measure, there exists some $\omega_0\in U\cap \overline{\Omega}$. Furthermore, $I_{\varphi}(\omega_0)$ is dense in $M$. Therefore, there exists some $x_0\in I_{\varphi}(\omega)\cap V$. In a word, we conclude that
\begin{displaymath}
	(\omega_0,x_0)\in (U\times V)\cap I_{\varphi}.
\end{displaymath}
This ends the proof of corollary \ref{corollary dense}.
\end{proof}
\subsection*{Acknowledgment}
 The second author was partially supported by NNSF of China (12401230).

\bibliographystyle{acm}
\bibliography{Multifractalanalysisbib}
\end{document}